\newtheorem{theorem}{Theorem}[section]
\newtheorem{lemma}{Lemma}[section]
\newtheorem{proposition}{Proposition}[section]
\newtheorem{corollary}{Corollary}[section]
\theoremstyle{definition}
\newtheorem{definition}{Definition}[section]
\theoremstyle{remark}
\newtheorem{remark}{Remark}[section]
\newtheorem{example}{Example}[section]
\numberwithin{equation}{section}
\newcommand{\R}{\mathbf{R}}
\newcommand{\bS}{\mathbf{S}}
\newcommand{\bH}{\mathbf{H}}
\newcommand{\cH}{\mathcal{H}}
\newcommand{\cL}{\mathcal{L}}
\newcommand{\eps}{\varepsilon}
\newcommand{\loc}{\textrm{loc}}
\newcommand{\uloc}{\textrm{uloc}}
\DeclareMathOperator{\supp}{\operatorname{supp}}
\DeclareMathOperator{\inj}{\operatorname{inj}}
\DeclareMathOperator{\Ric}{\operatorname{Ric}}
\DeclareMathOperator{\Vol}{\operatorname{Vol}}
\DeclareMathOperator{\Area}{\operatorname{Area}}
\DeclareMathOperator{\Width}{\operatorname{Width}}
\DeclareMathOperator{\Dis}{\operatorname{Dis}}
\DeclareMathOperator{\Pac}{\operatorname{Pac}}
\def\polhk#1{\setbox0=\hbox{#1}{\ooalign{\hidewidth
    \lower1.5ex\hbox{`}\hidewidth\crcr\unhbox0}}}
\begin{document}

\title[Solvability of a semilinear heat equation]
{Solvability of a semilinear heat equation\\ on Riemannian manifolds}

\author[J. Takahashi]{Jin Takahashi}
\address{Department of Mathematical and Computing Science, 
Tokyo Institute of Technology, 2-12-1 Ookayama, Meguro-ku, Tokyo 152-8552, Japan}
\email{takahashi@c.titech.ac.jp}
\thanks{The first author was supported in part 
by JSPS KAKENHI Grant Numbers  JP19K14567 and JP22H01131. 
The second author was supported in part by 
JSPS KAKENHI Grant Numbers JP18K13415 and JP22K13909.}

\author[H. Yamamoto]{Hikaru Yamamoto}
\address{Department of Mathematics, Faculty of Pure and Applied Science, 
University of Tsukuba, 1-1-1 Tennodai, Tsukuba, Ibaraki 305-8571, Japan}
\email{hyamamoto@math.tsukuba.ac.jp}

\subjclass[2020]{Primary 35K58; 
Secondary 35K15, 
35A01, 
35R06, 
58J35}

\keywords{Semilinear heat equation, existence, nonexistence, 
initial trace, Riemannian manifold, comparison theorem}

\begin{abstract}
We study the solvability of the initial value problem 
for the semilinear heat equation $u_t-\Delta u=u^p$ 
in a Riemannian manifold $M$ 
with a nonnegative Radon measure $\mu$ on $M$ as initial data. 
We give sharp conditions on the local-in-time solvability of the problem 
for complete and connected $M$ with positive injectivity radius 
and bounded sectional curvature. 
\end{abstract}

\maketitle

\tableofcontents

\section{Introduction}
The study of nonlinear parabolic equations on Riemannian manifolds 
is important, since it has many applications in geometry and other areas. 
The existence, nonexistence and behavior of the solution usually reflect the geometric nature of the underlying Riemannian manifold. 
As the simplest possible model case, 
we study the following semilinear heat equation: 
\begin{equation}\label{eq:main}
	\left\{
	\begin{aligned}
	&u_t-\Delta u=u^p  &&\mbox{ in }M\times(0,T), \\
	&u(\,\cdot\,,0)=\mu  &&\mbox{ in }M.  \\
	\end{aligned}
	\right.
\end{equation}
Here, $p>1$, $T>0$, $(M,g)$ is a 
possibly non-compact Riemannian $N$-manifold 
with bounded sectional curvature 
and $\mu$ is a nonnegative Radon measure on $M$. 
Our purpose in this paper is to give sharp conditions 
on the local-in-time solvability of \eqref{eq:main}. 

We first recall known results for the necessary conditions 
and the sufficient conditions on $\mu$ 
concerning the existence of solutions 
of \eqref{eq:main} on $M=\R^N$. 
For necessary conditions, Baras and Pierre \cite{BP85} proved that 
if $u\geq0$ satisfies $u_t-\Delta u=u^p$ in $\R^N\times(0,T)$, 
then there exists an initial trace $\mu$ uniquely determined by $u$. 
Their result particularly shows that 
the initial data of nonnegative solutions
are nonnegative Radon measures. 
They \cite{BP85}  also gave necessary conditions on $\mu$ 
for the existence of solutions of \eqref{eq:main} 
and their conditions were refined by 
Hisa and Ishige \cite[Theorem 1.1]{HI18} as follows. 
We write 
\[
	p_F:= \frac{N+2}{N}
\] 
and $B(x,\rho):=\{y\in\R^N; |x-y|<\rho\}$ 
for $x\in\R^N$ and $\rho>0$.
There exists a constant $C>0$ depending only on $N$ and $p$ 
such that $\mu$ must satisfy the following (i) and (ii): 
\begin{itemize}
\item[(i)]
If $p<p_F$, then 
$\displaystyle \sup_{z\in \R^N} \mu( B(z,\sqrt{T}) )\leq C T^{\frac{N}{2}-\frac{1}{p-1}}$. 
\item[(ii)]
If $p\geq p_F$, then for any $0<\rho< \sqrt{T}$, 
\[
	\sup_{z\in \R^N} \mu ( B(z,\rho) ) 
	\leq \left\{ 
	\begin{aligned}
	&C ( \log (e+\sqrt{T}\rho^{-1}) )^{-\frac{N}{2}} &&\mbox{ if }p= p_F, \\
	&C \rho^{N-\frac{2}{p-1}} &&\mbox{ if }p>p_F. 
	\end{aligned}
	\right.
\]
\end{itemize}
Moreover, by \cite[Theorem 1.3]{HI18}, 
(i) with $C$ small is also a sufficient condition. 
By \cite[Corollary 1.2]{HI18}, (ii) is optimal 
in the sense that there exist measures $\mu$ with the same growth rate 
as in the right-hand side 
such that \eqref{eq:main} admits local-in-time solutions.

For sufficient conditions on $M=\R^N$, 
if $\mu$ is an $L^\infty$-function, 
it is easy to show the existence of 
a local-in-time solution of \eqref{eq:main}. 
In the case where $\mu$ is an $L^q$-function, 
the study of sufficient conditions dates back to Weissler \cite{We79,We80}. 
For the progress with functions as initial data, 
we refer recent papers \cite{FI18,GM20pre,HI18,IKO20,LRSV16,Mi20}, 
a book \cite[Section 15]{QS19b} and the references therein. 
For sufficient conditions with measures as initial data, 
we refer \cite{AQ04,An97,AD91,BP85,Kk89,KY94,Niw86,SL10,Um21}. 
The first author \cite[Theorem 1]{Ta16} proved that 
the above (ii) is not a sufficient condition. 
More precisely, 
\eqref{eq:main} does not admit any local-in-time nonnegative solutions
for some $\mu$ satisfying 
\[
	\sup_{z\in\R^N}\mu (B(z,\rho) )
	\leq C\rho^{N-\frac{2}{p-1}}(\log(e+\rho^{-1}))^{-\frac{1}{p-1}}
	\quad \mbox{ if }p\geq p_F
\]
for any $\rho>0$ with a constant $C>0$. 
The first author \cite[Theorem 2]{Ta16} also proved that 
\eqref{eq:main} admits a local-in-time solution if 
\[
	\sup_{z\in\R^N}\mu (B(z,\rho))
	\leq C\rho^{N-\frac{2}{p-1}}(\log(e+\rho^{-1}))^{-\frac{1}{p-1}-\varepsilon}
	\quad \mbox{ if }p\geq p_F
\]
for any $\rho>0$ with constants $C>0$ and $0<\varepsilon<1/(p-1)$.

We next consider the solvability of \eqref{eq:main} on a Riemannian manifold $M$. 
In \cite[Theorems 4.2, 4.4]{Pu11}, Punzo generalized 
the result of Weissler \cite{We80} to bounded open subsets of $\bH^N$. 
Punzo \cite{Pu12o,Pu14} also gave analogous results of \cite{Pu11} 
for open subsets of $\bS^N$ 
and manifolds with negative sectional curvature, respectively. 
For related results, see 
\cite{BPT11,GGPpre,GSX20,MMP17,PS19,Pu12b,Pu19,Zh99}. 
It seems to the authors that, even for $\bS^N$ and $\bH^N$, 
the existence and estimates of initial traces and 
the conditions on solvability 
are not as well studied as the Euclidean case.

Intuitively, the local-in-time solvability of semilinear heat equations 
is determined by the local structure of the ambient space and of the initial data, 
and so one expects that the above results with measures on $\R^N$ 
as initial data also hold for the problem on $M$. 
In this paper, our main results show that the expectation is true
under appropriate assumptions on $M$ 
involving $\R^N$, $\bS^N$, $\bH^N$ and 
more general manifolds with 
positive injectivity radius and 
bounded sectional curvature. 

We summarize our main results. 
In what follows, we denote by $d=d(x,y)$ the distance function on $M$. 
For $x\in M$ and $\rho>0$, we write $B(x,\rho):= \{y\in M  ; d(x,y)<\rho\}$. 
In Theorem \ref{th:nec}, 
we show the existence and uniqueness of an initial trace $\mu$. 
We also show that if \eqref{eq:main} admits a solution, then $\mu$ must satisfy 
\begin{align}
	&\sup_{z\in M} \mu( B(z,\rho_T) )\leq C \rho_T^{N-\frac{2}{p-1}}
	\quad \mbox{ if }p<p_F, \label{eq:subcrinec} \\
	& \label{eq:crisupnec}
	\sup_{z\in M} \mu ( B(z,\rho) ) \leq
	\left\{ 
	\begin{aligned}
	&C ( \log (e+\rho_T\rho^{-1}) )^{-\frac{N}{2}}
	&& \mbox{ if }p=p_F, \\
	&C \rho^{N-\frac{2}{p-1}}
	&&\mbox{ if }p>p_F,
	\end{aligned}
	\right.
\end{align}
for any $0<\rho< \rho_T$. 
Here $\rho_T$ is an explicit constant determined by $M$ and $T$, 
see \eqref{eq:rho0def} for the definition.

Theorem \ref{th:necsha} shows that 
if $\mu$ satisfies \eqref{eq:subcrinec} with $C$ small, 
then \eqref{eq:main} admits a solution. 
We note that we can also construct solutions
if $\mu$ satisfies \eqref{eq:subcrinec} with $C$ large and 
$T>0$ small. 
Thus \eqref{eq:subcrinec} is a necessary and sufficient condition for existence.

In Theorem \ref{th:necshacri}, we prove that 
there exists $\mu$ satisfying 
\[
\left\{ 
\begin{aligned}
	&C_1 \rho^{N-\frac{2}{p-1}}
	\leq \sup_{z\in M} \mu ( B(z,\rho) ) \leq
	C_2 \rho^{N-\frac{2}{p-1}} 
	&&\mbox{ if } p>p_F, \\
	& C_1 ( \log (e+\rho_T\rho^{-1}) )^{- \frac{N}{2}}
	\leq \sup_{z\in M} \mu ( B(z,\rho) ) \leq
	C_2 ( \log (e+\rho_T\rho^{-1}) )^{- \frac{N}{2}} 
		&&\mbox{ if } p=p_F, \\
\end{aligned}
\right. 
\]
for any $0<\rho<\rho_T$ with small constants $C_1,C_2>0$ 
such that the problem \eqref{eq:main} admits a solution. 
Hence the condition \eqref{eq:crisupnec} is sharp. 
We remark that $\mu$ in Theorem \ref{th:necshacri} 
can be written as $d\mu=u_0dx$ with a suitable function $u_0$.

As proved in Theorem \ref{th:necsha}, 
the necessary condition \eqref{eq:subcrinec} for $p<p_F$
is also a sufficient condition for the existence of local-in-time solutions. 
However, for $p\geq p_F$, 
Theorem \ref{th:nex} shows that
the necessary condition \eqref{eq:crisupnec}, 
with $C$ replaced by a small constant, is not a sufficient condition.  
In the case $p\geq p_F$, for any constant $C>0$, 
there exists a nonnegative Radon measure $\mu$ satisfying 
\[
	\sup_{z\in M} \mu( B(z,\rho) ) \leq 
	C \rho^{N-\frac{2}{p-1}} 
	( \log(e+\rho^{-1}) )^{-\frac{1}{p-1}}
\]
for any $0<\rho< \rho_{\infty}$ such that 
the problem \eqref{eq:main} does not admit any solutions in $M\times[0,T)$ 
for all $T>0$. 
Here $\rho_\infty$ is an explicit constant determined by $M$, 
see \eqref{eq:rhopr0def} for the definition.

For existence in the case $p\geq p_F$, 
Theorem \ref{th:suf} shows that if $\mu$ satisfies 
\[
	\sup_{z\in M} \mu( B(z,\rho) )\leq C \rho^{N-\frac{2}{p-1}} 
	( \log (e+\rho_T \rho^{-1}) )^{-\frac{1}{p-1} -\eps}
\]
for any $0<\rho< \rho_T$ with small constants $C$ and $\eps$, 
then the problem \eqref{eq:main} admits a solution.

As corollaries, we consider the solvability 
of \eqref{eq:main} with $\mu$ replaced by a function $u_0$. 
Corollaries \ref{cor:ex} and \ref{cor:nex} give 
solvability results with $u_0$ in the uniformly local Lebesgue spaces. 
Fix $z_0 \in M$. 
Let $u_0$ satisfy 
\begin{equation}\label{eq:u0pw}
	u_0(x)= \left\{ 
	\begin{aligned}
	& C d(z_0,x)^{-\frac{2}{p-1}} 
	&&\mbox{ if }p>p_F, \\
	& C d(z_0,x)^{-N} 
	( \log (e + d(z_0,x)^{-1} ))^{-\frac{N}{2}-1} 
	&&\mbox{ if }p=p_F,
	\end{aligned}
	\right.
\end{equation}
for any $x\in B(z_0,\rho_\infty)$ with some constant $C>0$ 
and $u_0\in L^\infty(M\setminus B(z_0,\rho_\infty))$ 
if $M\setminus B(z_0,\rho_\infty)\neq \emptyset$. 
Then Corollaries \ref{cor:singnex} and \ref{cor:singex} 
imply that there exists a constant $C_*>0$ satisfying the following: 
If $C< C_*$, then a local-in-time solution starting from 
$u_0$ with \eqref{eq:u0pw} exists, 
and if $C> C_*$, then local-in-time solutions do not exist. 
We note that, up to the coefficients, 
the strongest singularity of $u_0$ for existence 
is $d(z_0,\,\cdot\,)^{-2/(p-1)}$ if $p>p_F$ and 
$d(z_0,\,\cdot\,)^{-N} ( \log (e + d(z_0,\,\cdot\,)^{-1} ))^{-N/2-1}$ if $p=p_F$. 
We also note that the authors do not know the explicit representation of 
$C_*$ and that specifying the representation is 
an open problem even for $M=\R^N$, 
see Souplet and Weissler \cite[Theorem 1]{SW03} and 
Hisa and Ishige \cite[Corollary 1.2]{HI18}.

This paper is organized as follows. 
In Section \ref{sec:main}, we state assumptions on $M$ 
and give the exact statements of main theorems and their corollaries. 
In Section \ref{sec:nec}, we study the necessary conditions on $\mu$ 
for the solvability of \eqref{eq:main} and prove Theorem \ref{th:nec}. 
In Section \ref{sec:sharp}, to see the sharpness of the necessary conditions, 
we construct solutions and prove Theorems \ref{th:necsha} and \ref{th:necshacri}. 
We also prove Theorem \ref{th:suf} in this section. 
In Section \ref{sec:nonex}, we consider the nonexistence of solutions 
and prove Theorem \ref{th:nex}. 
In Section \ref{sec:poc}, we show corollaries stated in Section \ref{sec:main}. 
In Appendix \ref{sec:app-comp}, we give some comparison theorems adjusted 
to the use in this paper. 
In Appendix \ref{sec:appa}, 
we give covering theorems used in Section \ref{sec:nec}.

\section{Main results}\label{sec:main}
We list assumptions and notation in Subsection \ref{sec:nota}. 
In Subsection \ref{sec:state}, we state main theorems rigorously.  
In Subsection \ref{sec:cor}, we summarize corollaries of the theorems.

\subsection{Notation}\label{sec:nota}
Let $N\geq1$. 
We always assume the following conditions: 
\[
\left\{ 
\begin{aligned}
&\mbox{$(M,g)$ is a connected and complete Riemannian $N$-manifold without boundary. } \\
&\mbox{The injectivity radius satisfies $0<\inj(M)\leq \infty$}. \\
&\mbox{The sectional curvature satisfies $|\sec(M)| \leq \kappa$
for some $0\leq \kappa<\infty$  when $N\geq 2$. }
\end{aligned}
\right.
\]
In the context of sufficient conditions for the existence of solutions, 
we may handle the problem \eqref{eq:main} in $\Omega$ with 
$u=0$ on $\partial \Omega$, where $\Omega$ is a domain in $M$. 
For more details, see Remark \ref{rem:Di}. 
For $0<T\leq \infty$, we set 
\begin{equation}\label{eq:rho0def}
	\rho_T:= 
	\min\left\{\sqrt{T}, \frac{1}{4}\inj(M), 
	\frac{\pi}{4\sqrt{\kappa}}\right\}, \\
\end{equation}
where we interpret $\pi/(4\sqrt{\kappa})=\infty$ 
if $\kappa=0$ or $N=1$. 
We remark that $M=\R^N$ if and only if $\rho_\infty=\infty$, where 
we also interpret 
\begin{equation}\label{eq:rhopr0def}
	\rho_\infty= 
	\min\left\{\frac{1}{4}\inj(M), 
	\frac{\pi}{4\sqrt{\kappa}}\right\}. 
\end{equation}

\begin{example}
We list the possible values of $\rho_{\infty}$ for some typical Riemannian manifolds. 
\begin{itemize}
\item $\rho_{\infty}=\infty$ for $M=\R^N$ with the standard Euclidean metric. 
\item $0<\rho_{\infty}\leq \pi/4$ for $M=\bS^N$, the sphere with the standard metric. 
\item $0<\rho_{\infty}\leq \pi/4$ for $M=\R^{N-\ell}\times\bS^{\ell}$, 
the multi-cylinder with the standard metric ($\ell\geq1$). 
\item $0<\rho_{\infty}\leq \pi/4$ for $M=\bH^N$, 
the hyperbolic space with the standard metric. 
\item $0<\rho_{\infty}<\infty$ for any compact $M$. 
\item $0<\rho_{\infty}<\infty$ for any non-compact $M$ with bounded geometry, 
that is, the Riemannian curvature tensor 
and its derivatives are bounded and the injectivity radius is positive. 
For example, asymptotically locally euclidean manifolds 
(the so-called ALE spaces) are in this class. 
\end{itemize}
\end{example}

We denote by $d=d(x,y)$ the distance function on $(M,g)$. 
For $x\in M$ and $\rho>0$, we write 
$B(x,\rho):= \{y\in M  ; d(x,y)<\rho\}$. 
$C_0(M)$ denotes the space of compactly supported continuous functions on $M$. 
We denote by $C^{2,1}$ the space of functions 
which are twice continuously differentiable in the space variable  
and once in the time variable. 
$dV_g$ denotes the Riemannian volume form of $g$. 
Unless otherwise stated, we use the term ``solution'' in the following sense: 

\begin{definition}\label{def:sol}
Let $T>0$. 
A function $u$ is called a solution of \eqref{eq:main} 
in $M\times[0,T)$ 
if $u\geq0$, $u\in C^{2,1}(M\times(0,T))$ and $u$ satisfies 
\begin{align}
	\label{eq:fujita}
	&u_t-\Delta u=u^p  \quad \mbox{ in }M\times(0,T), \\
	\label{eq:ini}
	&\lim_{t\to 0} \int_M \psi  u(\,\cdot\,,t) dV_g
	= \int_M \psi d\mu
	\quad 
	\mbox{ for any }\psi\in C_0(M). 
\end{align}
\end{definition}

\subsection{Statements of main results}\label{sec:state}
Our first result gives necessary conditions on the solvability of \eqref{eq:main}. 

\begin{theorem}\label{th:nec}
Let $N\geq1$, $p>1$ and $0<T<\infty$. 
Let $u\in C^{2,1}(M\times(0,T))$ be a nonnegative function satisfying \eqref{eq:fujita}. 
Then there exists a nonnegative Radon measure $\mu$ 
uniquely determined by $u$ such that 
$u$ is a solution of \eqref{eq:main} in $M\times [0,T)$. 
Moreover, there exists a constant $C>0$ depending only on $N$ and $p$ 
such that the following (i), (ii) and (iii) hold: 
\begin{itemize}
\item[(i)]
If $p<p_F$, then 
$\displaystyle \sup_{z\in M} \mu( B(z,\rho_T) )\leq C \rho_T^{N-\frac{2}{p-1}}$. 
\item[(ii)]
If $p=p_F$, then 
$\displaystyle \sup_{z\in M} \mu ( B(z,\rho) ) \leq
	C ( \log (e+\rho_T\rho^{-1}) )^{-\frac{N}{2}}$
for any $0<\rho< \rho_T$. 
\item[(iii)]
If $p>p_F$, then 
$\displaystyle \sup_{z\in M} \mu( B(z,\rho) ) \leq C \rho^{N-\frac{2}{p-1}}$
for any $0<\rho< \rho_T$. 
\end{itemize}
\end{theorem}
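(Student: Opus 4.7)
The plan is to follow the strategy of Baras--Pierre and Hisa--Ishige adapted to the Riemannian setting, where the comparison theorems from Appendix~\ref{sec:app-comp} play the role that explicit Euclidean computations play in $\R^N$. The key geometric input is that on any geodesic ball of radius $\rho<\rho_T\le \tfrac{1}{4}\inj(M)$, normal coordinates are defined and the metric is quantitatively close to the flat one, with errors controlled only by the curvature bound $\kappa$.

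For the existence and uniqueness of the initial trace $\mu$: since $u\ge 0$ and $u_t-\Delta u=u^p\ge 0$, the function $u$ is a nonnegative supersolution of the heat equation on $M\times(0,T)$. Testing against $\psi\in C_0(M)$ with $\psi\ge 0$ and using the Riemannian heat semigroup, the map
\[
t\;\longmapsto\;\int_M u(\,\cdot\,,t)\,e^{(\tau-t)\Delta}\psi\,dV_g
\]
is nondecreasing in $t\in(0,\tau)$ for any fixed $\tau\in(0,T)$. Hence the pointwise limit as $t\to 0^+$ exists (it is a monotone limit), defines a positive linear functional on $C_0(M)$, and the Riesz representation theorem supplies a unique nonnegative Radon measure $\mu$ realizing it. This forces the weak convergence $u(\,\cdot\,,t)\,dV_g \rightharpoonup d\mu$ in the sense of \eqref{eq:ini}, and uniqueness is immediate from the test-function characterization.

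For the necessary bounds, I would fix $z\in M$ and $\rho\in(0,\rho_T)$ and build a nonnegative test function $\phi=\phi_{z,\rho}\in C_0^\infty(M)$ supported in $B(z,C\rho)$, concentrated near $z$, and satisfying a differential inequality of the form $|\Delta_g\phi|\le C\rho^{-2}\phi^{1/p}$. The construction proceeds by pulling back a fixed Euclidean cutoff through the exponential map at $z$; the comparison theorems in Appendix~\ref{sec:app-comp} control the deviation of $\Delta_g$ from the Euclidean Laplacian on $B(z,\rho_T)$, so the resulting constants depend only on $N$ and $p$. Integrating the PDE against $\phi$ on $M\times[t,t_0]$, integrating by parts (no boundary term as $\phi$ is compactly supported), letting $t\to 0^+$, and applying Young's inequality to absorb $\int_0^{t_0}\!\!\int u\,|\Delta_g\phi|\,dV_g\,ds$ into $\int_0^{t_0}\!\!\int u^p\phi\,dV_g\,ds$, one arrives at
\[
\int_M \phi\, d\mu \;\le\; C\,t_0^{\,\frac{N}{2}-\frac{1}{p-1}}\cdot (\text{factors from the test function}).
\]
Choosing $t_0\sim \rho_T^2$ yields (i); choosing $t_0\sim\rho^2$ yields (iii); and (ii) is obtained by a dyadic iteration over scales $\rho,2\rho,\dots,\rho_T$, which accumulates the logarithmic correction $(\log(e+\rho_T\rho^{-1}))^{-N/2}$ because in the critical case the per-scale gain is scale-invariant.

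The main obstacle is the construction and control of the test function on the curved manifold: one needs $\phi$ and $\Delta_g\phi$ estimates whose constants are independent of $\kappa$, which is precisely why the natural scale is cut off at $\rho_T$ with the factor $\pi/(4\sqrt{\kappa})$. A secondary subtlety is the critical case $p=p_F$, where no single choice of $t_0$ produces the correct logarithmic bound and the dyadic summation (or, equivalently, a Gaussian-weighted test function combined with the heat-kernel comparison on balls of radius $\rho_T$) is essential.
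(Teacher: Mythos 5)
The proposal follows the same general Baras--Pierre/Hisa--Ishige strategy as the paper, but there is a genuine gap in the test-function part. You propose a purely spatial cutoff $\phi=\phi_{z,\rho}\in C_0^\infty(M)$ and claim that integrating the PDE against $\phi$ over $M\times[t,t_0]$ produces ``no boundary term.'' This is false: compact spatial support kills the spatial boundary term, but integrating $\partial_t u$ against a time-independent $\phi$ leaves $\int_M u(\,\cdot\,,t_0)\phi\,dV_g$, an uncontrolled quantity at the right endpoint. Young's inequality cannot absorb it, and the resulting estimate is $\int_M\phi\,d\mu\le\int_M u(\,\cdot\,,t_0)\phi\,dV_g+C\rho^{N-2/(p-1)}$, which is circular. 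The paper's Lemma~\ref{lem:young} avoids this precisely by using a \emph{space-time} test function $\phi\in C_0^\infty(M\times[0,T))$ vanishing near $t=T$ (e.g.\ $\eta(\Phi(x,t))$ with $\Phi=\rho^2/(d(x,z)^2+t)-1$), so that the temporal boundary term disappears. Your sketch can be repaired by inserting a temporal cutoff $\chi(t)$ with $\chi(t_0)=0$, $|\chi'|\lesssim t_0^{-1}\sim\rho^{-2}$, and absorbing the $u\phi\chi'$ term alongside $u\Delta\phi\,\chi$; but as written the argument does not close.

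Two secondary points. For the critical case $p=p_F$ you appeal to a ``dyadic iteration over scales'' to accumulate the log, but you do not explain how: the scale-$\rho$ estimate gives only $\mu(B(z,\rho))\le C$ uniformly, and summing the $u^p$ contributions over dyadic scales does not obviously produce the $(\log(e+\rho_T\rho^{-1}))^{-N/2}$ bound without substantial extra work. The paper instead uses a single logarithmic test function $\Phi\propto(\log(e+16\rho_T^2\rho^{-2}))^{-1}\log(e+\rho_T^2/(d^2+t))-1$, which directly produces the log factor, and your parenthetical alternative (a Gaussian/heat-kernel-weighted test function) is closer to this. For the initial trace, your monotonicity argument $\frac{d}{dt}\int u(\,\cdot\,,t)e^{(\tau-t)\Delta}\psi\,dV_g=\int u^p\,e^{(\tau-t)\Delta}\psi\ge0$ is a valid and nice observation, but note that what converges as $t\to0^+$ is $\int u(\,\cdot\,,t)e^{(\tau-t)\Delta}\psi$, whose limit is naturally identified with $\int e^{\tau\Delta}\psi\,d\mu$ rather than $\int\psi\,d\mu$; you still need an approximate-identity/$\tau\to0$ step (and the local $L^1$ bound on $u(\,\cdot\,,t)$ near $t=0$, which the paper derives from the minimality of the heat kernel) to recover the trace in the sense of Definition~\ref{def:sol}. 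Finally, you omit the Besicovitch-type covering step that upgrades the estimate from $\mu(B(z,\rho/2))$ (where the test function is identically $1$) to $\mu(B(z,\rho))$; this is the reason the paper needs Theorem~\ref{half_ball}.
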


Theorem \ref{th:nec} is sharp 
in view of the following two theorems:

\begin{theorem}\label{th:necsha}
Let $N\geq1$, $1<p<p_F$ and $0<T<\infty$. 
Then there exists a constant $c>0$ depending only on $N$ and $p$ 
such that the problem \eqref{eq:main} admits a solution in $M\times[0,\tilde \rho^2)$ 
for any $\tilde \rho\in(0,\rho_T]$ 
and nonnegative Radon measure $\mu$ satisfying
\begin{equation}\label{eq:sha1}
	\sup_{z\in M} \mu( B(z,\tilde \rho) )
	\leq c \tilde \rho^{N-\frac{2}{p-1}}. 
\end{equation}

\end{theorem}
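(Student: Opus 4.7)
My plan is to construct the solution via a monotone iteration of the mild formulation
\[
u(x,t) = (S(t)\mu)(x) + \int_0^t S(t-s)\bigl[u^p(\cdot,s)\bigr](x)\,ds,
\]
where $S(t)$ denotes the heat semigroup of $(M,g)$. The construction rests on two pillars: a pointwise upper bound on $S(t)\mu$, and a quantitative Kato-type convolution estimate that lets the iteration close.

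For the first pillar, I would use the Gaussian upper bound $K(x,y,t)\leq C_1 t^{-N/2}\exp(-c_1 d(x,y)^2/t)$ on the heat kernel, valid under the injectivity and curvature assumptions for $0<t<\rho_\infty^2$; this is where the comparison machinery of Appendix A enters. A dyadic annular decomposition of $M$ about $x$ yields
\[
(S(t)\mu)(x)\leq C_1 t^{-N/2}\sum_{k\geq 0}e^{-c_1 4^k}\mu\bigl(B(x,2^{k+1}\sqrt{t})\bigr).
\]
Under \eqref{eq:sha1} one first upgrades the single-scale hypothesis: for $r\leq\tilde\rho$, monotonicity together with the negativity of the exponent $N-2/(p-1)$ in the subcritical regime gives the scale-invariant bound $\mu(B(z,r))\leq c\,r^{N-2/(p-1)}$; for $r>\tilde\rho$, a Bishop--Gromov covering of $B(z,r)$ by $O((r/\tilde\rho)^N)$ balls of radius $\tilde\rho$ suffices. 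Summing the Gaussian series leads to
\[
\|S(t)\mu\|_{L^\infty(M)}\leq A\,c\,t^{-1/(p-1)},\qquad 0<t<\tilde\rho^2.
\]

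For the second pillar, I aim at a convolution inequality of the shape
\[
\int_0^t S(t-s)\bigl[(S(s)\mu)^p\bigr](x)\,ds\leq B\,c^{p-1}\,(S(t)\mu)(x),\qquad 0<t<\tilde\rho^2,
\]
with $B$ independent of $c$. The delicate point is that substituting the crude $L^\infty$ bound $(S(s)\mu)^p\leq (Ac)^p s^{-p/(p-1)}$ produces a time integral $\int_0^t s^{-p/(p-1)}\,ds$ which diverges at $0$, so one must keep the pointwise Gaussian structure of $S(s)\mu$ in the inner integral, estimate $(S(s)\mu)(z)$ annulus-by-annulus at scale $\sqrt{s}$, and combine with $K(x,z,t-s)$ via Chapman--Kolmogorov-type manipulations so that the remaining measure-weighted factor reassembles into $S(t)\mu(x)$. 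The resulting temporal integral converges precisely because $p<p_F$, i.e.\ $N(p-1)/2<1$, and the net power of $\tilde\rho$ matches \eqref{eq:sha1} so only the smallness $c^{p-1}$ survives. Granted this inequality, I choose $c$ with $2^p B c^{p-1}<1$, approximate $\mu$ from below by bounded absolutely continuous measures $\mu_j\uparrow\mu$ obeying \eqref{eq:sha1} with the same constant, and run the monotone Picard scheme $u_j^{(0)}:=0$, $u_j^{(n+1)}:=S(t)\mu_j+\int_0^t S(t-s)(u_j^{(n)})^p\,ds$; induction based on the Kato inequality gives $u_j^{(n)}\leq 2\,S(t)\mu_j$, so the monotone limit $u_j$ solves the integral equation on $M\times[0,\tilde\rho^2)$. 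Standard parabolic regularity promotes $u_j$ to a classical $C^{2,1}$-solution, and a further monotone passage to the limit in $j$ yields the solution for the original $\mu$, with \eqref{eq:ini} verified by the uniform bound $u_j\leq 2\,S(t)\mu_j$ and Fatou/dominated convergence. The main obstacle is the convolution inequality: transferring the single-scale hypothesis \eqref{eq:sha1} to a scale-invariant integrated bound on a general curved $M$ genuinely requires the pointwise Gaussian structure, and the curvature-dependent constants from the covering argument must be tracked with care so as not to destroy the $c^{p-1}$ smallness that ultimately closes the iteration.
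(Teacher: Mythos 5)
Your overall strategy is the paper's: form the supersolution $\overline{u}=2S(t)\mu$, factor one copy of $S(s)\mu$ out of the nonlinear term and reabsorb it via Chapman--Kolmogorov, and close a Kato-type inequality by a smallness choice of $c$. The heat-kernel pointwise bound also matches (the paper proves $S(t)\mu\leq Ct^{-N/2}\sup_z\mu(B(z,\sqrt t))$ via a Besicovitch-type covering plus Li--Yau Harnack, Lemma~\ref{lem:linheat}, rather than a dyadic Gaussian annulus sum, but the two are interchangeable here).

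The genuine gap is in the convolution estimate, and it originates in your scale-invariant upgrade. You replace $\mu(B(z,\sqrt t))\leq\mu(B(z,\tilde\rho))\leq c\,\tilde\rho^{N-\frac{2}{p-1}}$ by the scale-invariant $\mu(B(z,r))\leq c\,r^{N-\frac{2}{p-1}}$ and deduce $\|S(t)\mu\|_\infty\leq Ac\,t^{-1/(p-1)}$. Since the exponent $N-\frac{2}{p-1}$ is negative in the subcritical range, this is actually a \emph{weaker} bound than the unscaled one for $t<\tilde\rho^2$, and it is fatal for your plan: after the Chapman--Kolmogorov reassembly the Duhamel term is controlled by $S(t)\mu\cdot\int_0^t\|S(s)\mu\|_\infty^{p-1}\,ds$, and with $\|S(s)\mu\|_\infty\lesssim c\,s^{-1/(p-1)}$ this integrand is $\sim c^{p-1}s^{-1}$, which diverges at $s=0$ for every $p$. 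No amount of keeping the Gaussian structure annulus-by-annulus repairs this if you feed it the scale-invariant bound. Your concluding claim that ``the temporal integral converges precisely because $N(p-1)/2<1$ and the net power of $\tilde\rho$ matches'' describes a different (and correct) computation, but it only emerges if you instead use the raw, single-scale estimate
\[
\|S(s)\mu\|_\infty\leq C\,s^{-N/2}\sup_{z}\mu(B(z,\sqrt{s}))\leq C\,s^{-N/2}\sup_{z}\mu(B(z,\tilde\rho))\leq C\,c\,\tilde\rho^{N-\frac{2}{p-1}}\,s^{-N/2},
\]
which yields $\int_0^t\|S(s)\mu\|_\infty^{p-1}\,ds\leq C\,c^{p-1}\tilde\rho^{N(p-1)-2}\,t^{1-N(p-1)/2}\leq C\,c^{p-1}$ for $0<t<\tilde\rho^2$, using precisely $N(p-1)/2<1$. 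That is the paper's Lemma~\ref{lem:linheat} fed into the Fubini identity $\int_MK(x,y,t-s)S(s)\mu(y)\,dV_g(y)=S(t)\mu(x)$; no further Gaussian bookkeeping is required.

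Two minor remarks: the approximation by bounded $\mu_j\uparrow\mu$ is a harmless detour but unnecessary --- Lemma~\ref{lem:ite} runs the monotone scheme directly against the measure; and you should keep in mind that the regularity promotion and the verification of the initial trace \eqref{eq:ini} take up the larger half of the paper's proof (they use the Gaussian upper and lower heat-kernel bounds \eqref{prep9} and \eqref{prep10} to dominate $\int K\psi\,dV_g$ in $L^1_\mu$), so ``standard parabolic regularity'' plus ``Fatou'' is glossing over a nontrivial step on a noncompact $M$.
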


\begin{remark}
By taking $T$ small, we can construct solutions
for $\mu$ satisfying \eqref{eq:sha1} with $c$ large. 
Thus \eqref{eq:sha1} is a necessary and sufficient condition 
for the existence of local-in-time solutions. 
\end{remark}

\begin{theorem}\label{th:necshacri}
Let $N\geq1$, $p\geq p_F$ and $0<T<\infty$. 
Then there exist constants $C>0$ and $\tilde C>1$ 
such that the following holds: 
For any constant $0<c<C$, 
there exists a nonnegative Radon measure $\mu$ satisfying 
\[
\left\{ 
\begin{aligned}
	&\tilde C^{-1} c \rho^{N-\frac{2}{p-1}}
	\leq \sup_{z\in M} \mu ( B(z,\rho) ) \leq
	\tilde C c \rho^{N-\frac{2}{p-1}} 
	&&\mbox{ if } p>p_F, \\
	&\tilde C^{-1} c ( \log (e+\rho_T\rho^{-1}) )^{- \frac{N}{2}}
	\leq \sup_{z\in M} \mu ( B(z,\rho) ) \leq
	\tilde C c ( \log (e+\rho_T\rho^{-1}) )^{- \frac{N}{2}} 
		&&\mbox{ if } p=p_F, \\
\end{aligned}
\right. 
\]
for any $0<\rho<\rho_T$ such that 
the problem \eqref{eq:main} admits a solution in $M\times[0,\rho_T^2)$. 
Here $C$ depends only on $N$ and $p$ if $p>p_F$ 
and only on $N$ and $T$ if $p=p_F$. 
On the other hand, $\tilde C$ depends only on $N$ and $p$ in each of the cases. 
\end{theorem}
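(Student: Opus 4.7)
I would set $d\mu := u_0\, dV_g$, where $u_0$ is the explicit radial weight
\[
u_0(x) := c\,\phi\!\left(\frac{d(z_0,x)}{\rho_\infty}\right)
\begin{cases}
d(z_0,x)^{-\frac{2}{p-1}} & \text{if } p>p_F, \\[4pt]
d(z_0,x)^{-N}\bigl(\log(e + d(z_0,x)^{-1})\bigr)^{-\frac{N}{2}-1} & \text{if } p=p_F,
\end{cases}
\]
centred at a fixed pole $z_0 \in M$, with $\phi \in C_0^\infty([0,\infty))$ a cutoff equal to $1$ on $[0,1/2]$ and vanishing outside $[0,1]$. These profiles are exactly the Euclidean model ones whose ball masses realise the critical growth rates in \eqref{eq:crisupnec}; transplanting them to $M$ via the normal chart at $z_0$ of radius $\rho_\infty$ is made legitimate by $\inj(M)>0$ and $|\sec(M)|\leq\kappa$, which together guarantee that the exponential map at $z_0$ is a diffeomorphism on that chart and that the Riemannian volume density there is bi-Lipschitz equivalent to the Euclidean one with constants depending only on $N$ and $\kappa$.

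\textbf{Ball-mass verification.} For $B(z,\rho) \subset M$ with $z$ inside the chart and $d(z_0,z) \leq 2\rho$, Bishop--Gromov volume comparison reduces $\mu(B(z,\rho))$ to a Euclidean integral of the model profile up to constants depending only on $N$ and $\kappa$; the standard calculation yields both $\tilde C^{-1} c\, \rho^{N-\frac{2}{p-1}} \leq \mu(B(z,\rho)) \leq \tilde C c\, \rho^{N-\frac{2}{p-1}}$ in the supercritical case, and its logarithmic analogue at scale $\rho_T$ in the critical case (the presence of $\rho_T$ inside the logarithm appears after rescaling the cutoff by $\rho_T/\rho_\infty$ in the range $0<\rho<\rho_T$). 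For $d(z_0,z) \geq 2\rho$, the integrand is bounded on $B(z,\rho)$ by its value at the point of $\overline{B(z,\rho)}$ closest to $z_0$, giving $\mu(B(z,\rho)) \leq c\tilde C\, \rho^{N-\frac{2}{p-1}}$ since $\rho \leq d(z_0,z)/2$. The supremum in $z$ is thus attained near $z_0$ and realises the lower bound.

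\textbf{Existence and main obstacle.} The mass bound $\mu(B(z,\rho)) \lesssim c\,\rho^{N-\frac{2}{p-1}}$ sits precisely at the Morrey scale at which \eqref{eq:main} is critical, so the heat extension $v(x,t):=\int_M G(x,y,t)\,d\mu(y)$, where $G$ denotes the heat kernel of $M$, satisfies the scale-critical pointwise bound $v(x,t) \lesssim c\,t^{-\frac{1}{p-1}}$ on $M \times (0,\rho_T^2)$, with the corresponding logarithmic correction when $p = p_F$. This step requires a Gaussian upper bound on $G$, which is available under the bounded-geometry hypotheses through the comparison results of Appendix \ref{sec:app-comp}. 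A standard Banach fixed point argument for the Duhamel map $u \mapsto v + \int_0^t e^{(t-s)\Delta} u^p\, ds$, carried out in the weighted space $\{ u : \sup_{0<t<\rho_T^2} t^{\frac{1}{p-1}}\|u(\cdot,t)\|_{L^\infty(M)} < \infty \}$, then produces a classical nonnegative solution of \eqref{eq:main} on $M \times [0,\rho_T^2)$ with initial trace $\mu$ whenever $c$ is smaller than an explicit constant $C$ depending only on $N,p$ when $p>p_F$ and on $N,T$ when $p=p_F$. The hardest point is the Gaussian heat-kernel upper bound sharp enough to convert the critical Morrey mass into the scale-critical pointwise estimate for $v$, together with the careful tracking of the logarithmic factor when $p=p_F$ (which is also what forces $C$ to depend on $T$ in that case); once this is in place, both the ball-mass computation and the fixed-point closure reduce to controlled variants of their Euclidean counterparts.
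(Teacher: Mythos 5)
Your construction of $\mu$ and the ball-mass verification match the paper's Lemma 4.3 in spirit (the paper centres the cutoff at scale $\rho_T$ rather than $\rho_\infty$, but that is a cosmetic difference). The genuine gap is in your existence step.

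You propose to close the Duhamel map by a Banach fixed point in the weighted space $\{u : \sup_{0<t<\rho_T^2} t^{1/(p-1)}\|u(\cdot,t)\|_{L^\infty(M)}<\infty\}$. This does not work here: because the measure is built to saturate the critical Morrey scale, the linear term really does satisfy $\|v(\cdot,t)\|_\infty \sim c\,t^{-1/(p-1)}$, and then the naive estimate
\[
t^{\frac{1}{p-1}}\Bigl\|\int_0^t e^{(t-s)\Delta}u(s)^p\,ds\Bigr\|_\infty
\leq t^{\frac{1}{p-1}}\int_0^t \|u(\cdot,s)\|_\infty^p\,ds
\lesssim t^{\frac{1}{p-1}}\int_0^t s^{-\frac{p}{p-1}}\,ds
\]
is infinite, since $p/(p-1)>1$. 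The Duhamel map does not send a ball of that space into itself, so contraction cannot even get started. The exact-criticality of the initial data is precisely what defeats the standard Kato-type scheme, and this is the reason the theorem is nontrivial.

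The paper overcomes this with the convex-function supersolution trick (in the tradition of \cite{RS13,HI18}): one fixes $h(z)=z^\alpha$ with $1<\alpha<p$ for $p>p_F$ (and $h(z)=z(\log(A+z))^\beta$ for $p=p_F$), sets $U(x,t)=h^{-1}\bigl(\int_M K(x,y,t)h(f(y))\,dV_g(y)\bigr)$, and takes $\overline u=2cU$. Jensen's inequality gives $\int_M K\,d\mu\le cU$, while the identity $h(U(\cdot,s))=\int_M K(\cdot,z,s)h(f(z))\,dV_g(z)$ and the semigroup law rewrite the nonlinear Duhamel term as $h(U(x,t))\int_0^t\|U^p/h(U)\|_\infty\,ds$. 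The crucial gain is that $U^p/h(U)=U^{p-\alpha}$ decays like $s^{-(p-\alpha)/(p-1)}$ with exponent strictly less than $1$, so this time integral converges, and the factor $h(U)/U\sim U^{\alpha-1}\lesssim t^{-(\alpha-1)/(p-1)}$ exactly cancels the leftover power. The resulting $\overline u$ is a supersolution of the integral equation, and Lemma 4.2 (monotone iteration) produces the solution. If you want your existence step to work, you need to replace the critical Banach fixed point by this (or an equivalent) supersolution argument.
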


\begin{remark}
Each $\mu$ in Theorem \ref{th:necshacri} 
can be written by $d\mu=u_0dx$ with a suitable function $u_0$. 
For the precise admissible singularity of $u_0$, see Corollary \ref{cor:singex}. 
\end{remark}

We show that the necessary conditions in Theorem \ref{th:nec} (ii) and (iii)
are not always sufficient conditions even if 
$C$ is replaced by a small constant. 

\begin{theorem}\label{th:nex}
Let $N\geq1$ and $p\geq p_F$. Then, for any constant $c>0$, 
there exists a nonnegative Radon measure $\mu$ satisfying 
\[
	\sup_{z\in M} \mu( B(z,\rho) ) \leq 
	c \rho^{N-\frac{2}{p-1}} 
	( \log(e+\rho^{-1}) )^{-\frac{1}{p-1}}
	\quad 
	\mbox{ for any }0<\rho< \rho_{\infty}
\]
such that 
the problem \eqref{eq:main} does not admit any solutions in $M\times[0,T)$ for any $T>0$. 
\end{theorem}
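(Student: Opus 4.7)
The plan is to reduce the statement to the Euclidean nonexistence result of the first author in \cite{Ta16} by localizing to a single normal-coordinate chart around a fixed point and then transferring the borderline obstruction from $\R^N$ to $M$ via the heat-kernel comparison results of Appendix \ref{sec:app-comp}.

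First I would fix any point $z_0\in M$. Since $\inj(M)>0$ and $|\sec(M)|\leq \kappa$, the exponential map $\exp_{z_0}$ is a diffeomorphism of the Euclidean ball of radius $\rho_\infty$ in $T_{z_0}M\cong\R^N$ onto the geodesic ball $B(z_0,\rho_\infty)\subset M$, and by Rauch comparison the metric coefficients in normal coordinates are two-sided comparable to the flat ones with constants depending only on $N$ and $\kappa$. In particular geodesic balls contained in $B(z_0,\rho_\infty/2)$ are quasi-isometric to Euclidean balls with the same center and radius. This allows me to transplant the construction of \cite[Theorem~1]{Ta16}: for each $c>0$ I take the Euclidean measure $\mu_{\textrm{Euc}}$ produced in that reference, supported in a tiny ball around the origin and realizing the borderline density bound $\sup_{z\in\R^N}\mu_{\textrm{Euc}}(B(z,\rho))\leq c'\rho^{N-\frac{2}{p-1}}(\log(e+\rho^{-1}))^{-\frac{1}{p-1}}$, and define $\mu$ on $M$ as the pushforward of $\mu_{\textrm{Euc}}$ under $\exp_{z_0}$.

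Next I would verify that this $\mu$ satisfies the required estimate. Because $\mu$ is supported in a small neighborhood of $z_0$, only balls $B(z,\rho)$ that meet this neighborhood can carry mass, and for such balls the quasi-isometry between normal coordinates and $\R^N$ turns the Euclidean density estimate into
\[
\sup_{z\in M}\mu(B(z,\rho))\leq c\,\rho^{N-\frac{2}{p-1}}\bigl(\log(e+\rho^{-1})\bigr)^{-\frac{1}{p-1}},\qquad 0<\rho<\rho_\infty,
\]
at the cost of a harmless adjustment of $c'$ that can be absorbed by rescaling $\mu_{\textrm{Euc}}$. For the nonexistence I would argue by contradiction: suppose a solution $u$ of \eqref{eq:main} exists on $M\times[0,T)$. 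Duhamel's formula together with $u\geq0$ gives the pointwise lower bound $u(x,t)\geq (e^{t\Delta}\mu)(x)$ on $B(z_0,\rho_\infty/2)$ for small $t$, and Appendix \ref{sec:app-comp} yields a short-time Gaussian lower bound for the heat kernel on $M$, so that in normal coordinates $e^{t\Delta}\mu$ is controlled from below by a constant multiple of the Euclidean heat evolution of $\mu_{\textrm{Euc}}$. Iterating Duhamel and using $u^p\geq (e^{t\Delta}\mu)^p$ now reproduces the Euclidean argument of \cite[Theorem~1]{Ta16}, which forces $u(z_0,t)=+\infty$ for every $t>0$ and contradicts $u\in C^{2,1}(M\times(0,T))$.

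The main obstacle is the last step: the logarithmic exponent $1/(p-1)$ is precisely critical, so the transfer from $\R^N$ to $M$ must not degrade the constants in a way that smooths out the divergence. What saves the argument is that on the short time scale $t\leq\rho_\infty^2$ the heat kernel on $M$ differs from the Euclidean one only by a multiplicative constant depending on $N$ and $\kappa$, and such a constant does not affect the rate of divergence produced by the Duhamel iteration. Once the requisite short-time Gaussian comparison is extracted from Appendix \ref{sec:app-comp}, the remainder of the proof is a careful transcription of \cite{Ta16} into geodesic normal coordinates centered at $z_0$.
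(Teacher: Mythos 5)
Your overall strategy---transfer the Euclidean construction of \cite{Ta16} to $M$ through geodesic normal coordinates, using the short-time Gaussian comparison for the heat kernel---is exactly the strategy the paper follows, so up to that point you are on the right track. The verification that the pushed-forward measure satisfies the borderline density bound is also essentially what the paper does (it is the estimate $\|f_n\|_Y\leq C(2^nR_n)^N$, proved via normal coordinates and the volume/distance comparison inequalities).

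The genuine gap is in your final step. You claim that Duhamel iteration with $u^p\geq(e^{t\Delta}\mu)^p$ ``forces $u(z_0,t)=+\infty$ for every $t>0$,'' but this cannot be the mechanism. Notice that Theorem~\ref{th:necshacri} constructs measures with the \emph{same} growth rate $\rho^{N-2/(p-1)}$ (indeed a slightly faster one, without the logarithmic loss) for which solutions do exist; so an argument that uses only the density bound and the Gaussian kernel can never prove nonexistence. Any naive pointwise iteration is therefore doomed to produce finite quantities, because it cannot distinguish the bad initial data from the good ones. The actual obstruction in \cite{Ta16}, and in the present paper's Section~\ref{sec:nonex}, is structural: one builds a Cantor-type initial data $f_0$ and shows, via unboundedness of a fractional maximal operator and the Brezis--Cazenave closed graph device, that the linear evolution $\cL[f_0]$ fails to lie in $L^p$ of a small spacetime cylinder. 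The contradiction is then an \emph{integral} one---any solution $u$ must satisfy a local spacetime $L^p$ bound (obtained by testing the PDE against a cutoff), together with $u\geq\cL[u_0]$, which clashes with $\|\cL[u_0]\|_{L^p}=\infty$. Your proposal omits the closed-graph step entirely, treats the Euclidean bad initial data as a black box, and replaces the $L^p$-in-spacetime contradiction by a pointwise one that the critical threshold simply does not support. To repair it you would need to (i) import or rebuild the fractal construction and the closed-graph argument on $M$ (as the paper does), and (ii) replace the pointwise blowup claim by the spacetime $L^p$ contradiction.
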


If we impose a slightly stronger condition than that of Theorem \ref{th:nex}, 
we obtain a general sufficient condition 
for the solvability of \eqref{eq:main} in the case $p\geq p_F$.

\begin{theorem}\label{th:suf}
Let $N\geq1$, $p\geq p_F$ and $0<T<\infty$. 
Then there exists a constant $c>0$ depending only on $N$ and $p$ 
such that the following holds: 
For any $\eps> 0$, $\tilde \rho\in (0,\rho_T]$ 
and nonnegative Radon measure $\mu$ satisfying
\begin{equation}\label{eq:suf}
	\sup_{z\in M} \mu( B(z,\rho) )\leq c \rho^{N-\frac{2}{p-1}} 
	( \log (e+\tilde \rho \rho^{-1}) )^{-\frac{1}{p-1} -\eps}
	\quad \mbox{ for any }0<\rho< \tilde \rho, 
\end{equation}
the problem \eqref{eq:main} admits a solution 
in $M\times[0,\tilde \rho^2)$. 
\end{theorem}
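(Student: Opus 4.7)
The plan is to construct the solution by monotone iteration of the Duhamel formula
\[
u_{k+1}(x,t) = u_0(x,t) + \int_0^t e^{(t-s)\Delta}(u_k^p)(\cdot,s)(x)\,ds,\qquad u_0(\cdot,t):=e^{t\Delta}\mu,
\]
and then pass to a classical solution by standard parabolic regularity. Under the two-sided sectional curvature bound and positive injectivity radius, the heat kernel of $M$ obeys a Gaussian upper bound $p_M(x,y,t)\leq C t^{-N/2}\exp(-c d(x,y)^2/t)$ for $t\in(0,\rho_T^2]$, coming from the comparison theorems collected in Appendix \ref{sec:app-comp}. Decomposing $M$ into geodesic annuli around $x$ at dyadic scales $2^k\sqrt{t}$ and applying the hypothesis \eqref{eq:suf} on annuli with $2^k\sqrt{t}<\tilde\rho$ (with Bishop--Gromov volume comparison used for the exponentially suppressed tail at scales beyond $\tilde\rho$), I would obtain
\[
\|u_0(\cdot,t)\|_{L^\infty(M)}\leq C_0\,c\,\phi(t),\qquad \phi(t):=t^{-\frac{1}{p-1}}\bigl(\log(e+\tilde\rho/\sqrt{t})\bigr)^{-\frac{1}{p-1}-\varepsilon},
\]
for $t\in(0,\tilde\rho^2)$, with $C_0=C_0(N,p)$.

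I would then close the iteration inside the cone $\{u:0\leq u\leq K u_0\}$ for some $K>1$ to be chosen. The crucial estimate combines the pointwise bound $u_k^p\leq\|u_k(\cdot,s)\|_\infty^{p-1}u_k$ with the semigroup identity $e^{(t-s)\Delta}e^{s\Delta}\mu=e^{t\Delta}\mu$ (valid by stochastic completeness of $M$, which is ensured by the Ricci lower bound implied by bounded sectional curvature), and yields
\[
\int_0^t e^{(t-s)\Delta}(u_k^p)(\cdot,s)\,ds\leq K^p(C_0c)^{p-1}\,u_0(\cdot,t)\,I(t),\qquad I(t):=\int_0^t\phi(s)^{p-1}\,ds.
\]
The substitution $\sigma=\log(e+\tilde\rho/\sqrt{s})$ shows that $I(t)$ is a bounded multiple of $\int_{\sigma_0}^\infty\sigma^{-1-(p-1)\varepsilon}\,d\sigma$, which is finite precisely because $\varepsilon>0$. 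Choosing $K=p/(p-1)$ (which maximizes $(K-1)/K^p$ over $K>1$) and taking $c$ small enough then gives $u_{k+1}\leq K u_0$, and the monotone limit is the desired classical solution satisfying \eqref{eq:fujita}; the initial-trace condition \eqref{eq:ini} follows from $u\geq u_0$ and continuity of $u-u_0$ down to $t=0$.

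The main obstacle is to obtain a constant $c$ that depends only on $N$ and $p$, not on $\varepsilon$, as the statement requires. A naive evaluation only gives $I(t)\leq C/\varepsilon$ and hence forces $c$ to shrink like $\varepsilon^{1/(p-1)}$. I expect the paper to circumvent this by replacing the pointwise bound $u\leq K u_0$ by a Morrey-type weighted control
\[
\sup_{z\in M,\,0<\rho<\tilde\rho}\rho^{-(N-\frac{2}{p-1})}\bigl(\log(e+\tilde\rho/\rho)\bigr)^{\frac{1}{p-1}+\varepsilon}\int_{B(z,\rho)} u(y,t)\,dV_g(y),
\]
and by running the fixed-point argument in the corresponding weighted space, so that the heat kernel converts the spatial mass growth of $\mu$ directly into the temporal decay of $u$ without introducing the offending $1/\varepsilon$ factor. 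Verifying that this Morrey-type quantity is preserved by the nonlinear map, with an $\varepsilon$-independent constant, should be the technical heart of the argument.
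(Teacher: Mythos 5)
Your construction of the supersolution $\overline{u}=K\,U$ with $U(x,t)=\int_M K(x,y,t)\,d\mu(y)$, the $L^\infty$ bound on $U$ via the heat kernel and comparison theorems (Lemma~\ref{lem:linheat}), the reduction of the Duhamel term to $U(x,t)\int_0^t\|U(\,\cdot\,,s)\|_\infty^{p-1}\,ds$ via the semigroup property, and the identification of $I(t)=\int_0^t\|U(\,\cdot\,,s)\|_\infty^{p-1}\,ds$ as the quantity to control all coincide with the paper's proof in Subsection~\ref{subsec:suff} (the paper simply fixes $K=2$ rather than optimizing $K$). The paper then performs exactly the ``naive evaluation'' you describe; it does \emph{not} run the fixed point in a Morrey-type weighted space as you conjectured in your final paragraph.

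Your worry about $\eps$-uniformity is a sharp observation, and it points to a step in the paper that is not justified. The direct substitution gives
\[
\int_0^t s^{-1}\bigl(\log(e+\tilde\rho s^{-1/2})\bigr)^{-1-(p-1)\eps}\,ds
\;\leq\; \frac{2(1+e)}{(p-1)\eps}\,\bigl(\log(e+\tilde\rho t^{-1/2})\bigr)^{-(p-1)\eps},
\]
exactly the $C/\eps$ growth you predicted. The paper asserts this is at most $C\bigl(\log(e+\tilde\rho t^{-1/2})\bigr)^{-(p-1)\eps}$ with $C$ depending only on $N$ and $p$, but the proof as written provides no mechanism to absorb the $1/\eps$ factor, so the smallness threshold $c$ produced by that argument in fact shrinks like $\eps^{1/(p-1)}$ as $\eps\to 0$. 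You should note, though, that this does not propagate downstream: in the paper Theorem~\ref{th:suf} is only ever invoked with $\eps=1$ (in the proof of Corollary~\ref{cor:ex}(ii)), where the $\eps$-dependence is harmless. If you want a genuinely $\eps$-uniform statement you would indeed need a new idea along the lines you sketched; whether the Morrey-space scheme closes with $\eps$-independent constants is not addressed by the paper and remains to be verified.
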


\subsection{Corollaries}\label{sec:cor}
We summarize corollaries of our theorems 
for the following problem: 
\begin{equation}\label{eq:fun}
\left\{ 
\begin{aligned}
	&u_t-\Delta u=u^p &&\mbox{ in }M\times(0,T), \\
	&u(\,\cdot\,,0)=u_0 &&\mbox{ in }M, 
\end{aligned}
\right.
\end{equation}
where $u_0$ is a nonnegative function. 
A function $u$ is called a solution of \eqref{eq:fun} in $M\times[0,T)$ 
if $u$ is a solution of \eqref{eq:main} 
in the sense of Definition \ref{def:sol} 
with \eqref{eq:ini} replaced by 
\[
	\lim_{t\to 0} \int_M \psi  u(\,\cdot\,,t) dV_g
	= \int_M \psi u_0 dV_g
	\quad \mbox{ for any } \psi\in C_0(M). 
\]

Let us first consider the existence of solutions of \eqref{eq:fun} 
with functions in 
the uniformly local Lebesgue space $L^q_{\uloc,\rho}$ as initial data. 
Here, we define $L^q_{\uloc,\rho}(M)$ by 
\[
\begin{aligned}
	&L^q_{\uloc,\rho}(M):=
	\{ f\in L^q_\loc(M); \| f\|_{L^q_{\uloc,\rho}(M)} <\infty\}, \\
	&\| f\|_{L^q_{\uloc,\rho}(M)}:= 
	\sup_{z\in M} \left( \int_{B(z,\rho)} |f|^q dV_g \right)^\frac{1}{q},
\end{aligned}
\]
for $\rho>0$ and $q>1$ 
(see \cite{Ka75,GV97,MT06} for $M=\R^N$). 
The following corollary generalizes 
the results by Weissler \cite{We79,We80} 
which handle the case where $u_0\in L^q(\Omega)$ with $\Omega\subset \R^N$. 

\begin{corollary}\label{cor:ex}
Let $N\geq1$ and $0<T<\infty$. 
Assume one of the following: 
\begin{itemize}
\item[(i)]
$p>p_F$ and $q\geq N(p-1)/2$. 
\item[(ii)]
$p=p_F$ and $q>1$. 
\item[(iii)]
$1<p<p_F$ and $q\geq1$. 
\end{itemize}
Then there exists a constant $c>0$ such that the following holds: 
For any $\tilde \rho\in(0,\rho_T]$ and nonnegative function 
$u_0\in L^q_{\uloc,\tilde \rho}(M)$ 
with $\|u_0\|_{L^q_{\uloc,\tilde \rho}(M)}\leq c$, 
the problem \eqref{eq:fun} admits a solution in $M\times[0,\tilde \rho^2)$. 
Here, $c$ depends only on $N$, $p$, $q$ 
and $\tilde \rho^{2-N(p-1)/q}$ if (i) holds 
and on $N$, $p$, $q$ and $\tilde \rho^{N(1-1/q)}$ if (ii) or (iii) holds. 
\end{corollary}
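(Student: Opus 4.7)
My plan is to deduce Corollary \ref{cor:ex} from the measure-theoretic existence Theorems \ref{th:necsha} and \ref{th:suf} by associating to $u_0$ the Radon measure $d\mu:=u_0\,dV_g$ and converting the uniformly local $L^q$-smallness of $u_0$ into the ball-mass hypotheses required by those theorems. The core computation is a single Hölder and volume-comparison estimate: for any $z\in M$ and $0<\rho\leq\tilde\rho\leq\rho_T$,
\[
\mu(B(z,\rho))\leq \|u_0\|_{L^q(B(z,\rho))}\,V_g(B(z,\rho))^{1-\frac{1}{q}}\leq \|u_0\|_{L^q_{\uloc,\tilde\rho}(M)}\,V_g(B(z,\rho))^{1-\frac{1}{q}}.
\]
Since $|\sec(M)|\leq\kappa$ and $\rho\leq\rho_T\leq\pi/(4\sqrt{\kappa})$, the Bishop volume comparison theorem yields $V_g(B(z,\rho))\leq C_0(N,\kappa)\rho^N$, so that
\[
\sup_{z\in M}\mu(B(z,\rho))\leq C_0^{1-\frac{1}{q}}\,\|u_0\|_{L^q_{\uloc,\tilde\rho}(M)}\,\rho^{N(1-\frac{1}{q})}\qquad\text{for }0<\rho\leq\tilde\rho.
\]

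The second step is a case split. In case (iii), $p<p_F$, Theorem \ref{th:necsha} only requires the single-scale bound at $\rho=\tilde\rho$, and choosing $c$ to satisfy $C_0^{1-\frac{1}{q}}c\,\tilde\rho^{N(1-\frac{1}{q})}\leq c_{\ref{th:necsha}}\tilde\rho^{N-\frac{2}{p-1}}$ closes the argument immediately. In the strict-inequality portions of cases (i) ($q>N(p-1)/2$) and (ii) ($q>1$), the exponent gap $\delta:=\frac{2}{p-1}-\frac{N}{q}$ is strictly positive, so for any fixed $\eps>0$ the elementary bound
\[
\sup_{0<\rho\leq\tilde\rho}\rho^{\delta}\bigl(\log(e+\tilde\rho\rho^{-1})\bigr)^{\frac{1}{p-1}+\eps}\;\leq\; C(N,p,q,\eps)\,\tilde\rho^{\delta}
\]
converts the Hölder estimate into the hypothesis of Theorem \ref{th:suf}; a careful accounting of constants then produces the stated dependence of $c$ on $\tilde\rho^{2-N(p-1)/q}$ in (i) and on $\tilde\rho^{N(1-\frac{1}{q})}$ in (ii). Note that in case (ii) $p=p_F$ gives $\delta=N(1-1/q)$, matching the stated parameter exactly.

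I expect the main obstacle to be the critical endpoint $q=N(p-1)/2$ in case (i), where $\delta=0$ and Hölder supplies exactly the critical Morrey bound $\mu(B(z,\rho))\leq C\rho^{N-\frac{2}{p-1}}$, just missing the logarithmic correction demanded by Theorem \ref{th:suf}. To treat it I would write $u_0=\min(u_0,\Lambda)+(u_0-\Lambda)_+$: the bounded piece admits a classical local-in-time solution, and the tail can be made arbitrarily small in $L^q_{\uloc,\tilde\rho}$ by taking $\Lambda$ large via monotone convergence, reducing us to an endpoint Morrey datum with small constant. The endpoint measure would then be handled by rerunning the iteration scheme underlying Theorem \ref{th:necshacri}, whose internal construction is in fact robust enough to process any measure meeting the critical Morrey bound with sufficiently small constant. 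Reassembling the two pieces into a solution starting from $u_0$ requires the comparison principle proved in Appendix \ref{sec:app-comp}; constructing a global supersolution in the curved setting, where the heat kernel lacks the translation symmetry of $\R^N$ and must be controlled through the sectional curvature bound, is the geometrically delicate point that distinguishes the Riemannian case from its Euclidean counterpart.
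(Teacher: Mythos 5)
Your reduction of cases (ii), (iii), and the non-endpoint part of case (i) via $d\mu = u_0\,dV_g$, H\"older, and the volume comparison bound is correct and essentially matches the paper. In case (iii) the paper invokes Theorem \ref{th:necsha} directly; in case (ii) it invokes Theorem \ref{th:suf} after the elementary bound $X^{N(1-1/q)} \leq C(\log(e+X^{-1}))^{-N/2-1}$, which is the same mechanism as your $\rho^\delta (\log(e+\tilde\rho\rho^{-1}))^{1/(p-1)+\eps} \leq C\tilde\rho^\delta$ bound. The dependence of $c$ on $\tilde\rho^{N(1-1/q)}$ and $\tilde\rho^{2-N(p-1)/q}$ also comes out correctly from your accounting.

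The genuine gap is the treatment of the endpoint $q = N(p-1)/2$ in case (i). You claim that the iteration scheme ``underlying Theorem \ref{th:necshacri} is in fact robust enough to process any measure meeting the critical Morrey bound with sufficiently small constant.'' This is false, and Theorem \ref{th:nex} already disproves it: for any $c>0$, there exists a nonnegative Radon measure with $\sup_z\mu(B(z,\rho)) \leq c\,\rho^{N-\frac{2}{p-1}}(\log(e+\rho^{-1}))^{-\frac{1}{p-1}}$ (hence satisfying the critical Morrey bound with constant as small as you like) for which no local-in-time solution exists. So once you have passed from $u_0$ to the derived Morrey bound on $\mu$, you have discarded exactly the information ($u_0 \in L^q$ with $q>1$) that separates the solvable endpoint data from the non-solvable ones; the truncation $u_0 = \min(u_0,\Lambda) + (u_0-\Lambda)_+$ does not repair this, because after truncation the small tail is still only controlled through the same insufficient Morrey bound, and there is no comparison principle in Appendix \ref{sec:app-comp} to invoke --- that appendix contains geometric comparison theorems (Laplacian, volume, Jacobi field, heat kernel), not a nonlinear parabolic comparison principle.

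What the paper actually does in case (i), including the endpoint, is to run the supersolution construction of Theorem \ref{th:necshacri} with $h(z)=z^\alpha$ for a \emph{fixed} $1<\alpha<q$, applied directly to $u_0$ rather than to a derived measure. That is, one sets $U(x,t) = \bigl(\int_M K(x,y,t)u_0(y)^\alpha\,dV_g(y)\bigr)^{1/\alpha}$, uses Jensen to dominate $\int K u_0\,dV_g$ by $U$, then estimates $\|U(\cdot,t)\|_\infty$ via Lemma \ref{lem:linheat} applied to $u_0^\alpha$ combined with H\"older in $L^{q/\alpha}$ (with $q/\alpha > 1$). This genuinely exploits the $L^q$ integrability beyond the Morrey bound: the key quantity is $t^{-N/2}\sup_z \|u_0\|_{L^q(B(z,\sqrt t))}\Vol(B(z,\sqrt t))^{1/\alpha - 1/q}$, and the $\Vol^{1/\alpha-1/q}$ factor (positive power, since $\alpha<q$) supplies exactly the extra time decay that the critical Morrey bound alone cannot give. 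Your strategy needs to be replaced by this direct power-convexity argument at the endpoint.
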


We next examine the nonexistence of solutions. 
Theorem \ref{th:nec} implies the following:  

\begin{corollary}\label{cor:singnex}
Let $N\geq1$, $p\geq p_F$ and $z_0\in M$. 
Then there exists a constant $c>0$ depending only on $N$ and $p$ such that the following holds: 
For any nonnegative function $u_0$ satisfying 
\begin{equation}\label{eq:u0la}
	u_0(x)\geq \left\{ 
	\begin{aligned}
	& c d(z_0,x)^{-\frac{2}{p-1}} 
	&&\mbox{ if }p>p_F, \\
	& c d(z_0,x)^{-N} 
	( \log (e + d(z_0,x)^{-1} ))^{-\frac{N}{2}-1}
	&&\mbox{ if }p=p_F, 
	\end{aligned}
	\right.
\end{equation}
for any $x\in B(z_0,\rho_\infty)$ and $u_0\in L^\infty(M\setminus B(z_0,\rho_\infty))$ 
if $M\setminus B(z_0,\rho_\infty)\neq \emptyset$, 
the problem \eqref{eq:fun} does not admit any local-in-time solutions. 
\end{corollary}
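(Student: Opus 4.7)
The plan is to argue by contradiction using Theorem \ref{th:nec}. Suppose $u$ is a solution of \eqref{eq:fun} in $M\times[0,T)$ for some $T>0$. By Definition \ref{def:sol} the initial trace of $u$ is exactly $d\mu=u_0\,dV_g$, so Theorem \ref{th:nec}(ii)--(iii), applied at $z=z_0$, forces
\[
\mu(B(z_0,\rho))\le
\begin{cases}
C\rho^{N-\frac{2}{p-1}} & \text{if }p>p_F,\\[2pt]
C\bigl(\log(e+\rho_T\rho^{-1})\bigr)^{-\frac{N}{2}} & \text{if }p=p_F,
\end{cases}
\]
for every $0<\rho<\rho_T$, with $C=C(N,p)$. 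My goal is to show that integrating the pointwise lower bound \eqref{eq:u0la} over $B(z_0,\rho)$ yields a lower bound on $\mu(B(z_0,\rho))$ of the same shape, with a prefactor proportional to the constant $c$ appearing in \eqref{eq:u0la}; if $c$ is taken above a threshold depending only on $N$ and $p$, this contradicts the upper bound for small $\rho$.

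The required lower bound is obtained in geodesic polar coordinates around $z_0$. Because $\rho<\rho_\infty\le\inj(M)/4$, such coordinates are defined and $d(z_0,\exp_{z_0}(r\theta))=r$. Because additionally $\rho_\infty\le\pi/(4\sqrt\kappa)$ and $|\sec(M)|\le\kappa$, Bishop--G\"unther volume comparison produces a constant $c_1=c_1(N)>0$ such that the geodesic-sphere measure satisfies $\mathcal{H}^{N-1}(\partial B(z_0,r))\ge c_1 r^{N-1}$ for all $0<r<\rho_\infty$. Since $|\nabla d(z_0,\cdot)|=1$ inside the injectivity radius, the co-area formula gives
\[
\mu(B(z_0,\rho))\ge c\int_0^\rho f(r)\,\mathcal{H}^{N-1}(\partial B(z_0,r))\,dr
\ge c\,c_1\int_0^\rho f(r)\,r^{N-1}\,dr,
\]
where $f(r)$ denotes the radial profile appearing in \eqref{eq:u0la}.

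If $p>p_F$ then $2/(p-1)<N$, so the radial integral equals $\rho^{N-2/(p-1)}/(N-2/(p-1))$, and contradicts Theorem \ref{th:nec}(iii) as soon as $c$ exceeds an explicit ratio of constants depending only on $N$ and $p$. If $p=p_F$ then $2/(p-1)=N$, and after the substitution $s=\log(e+r^{-1})$ the radial integral reduces to $\int_{\log(e+\rho^{-1})}^\infty s^{-N/2-1}\,ds$ (up to a uniformly bounded factor from $dr/r=-(er+1)^{-1}\,ds$, which is controlled for $r<1$), producing a lower bound of the form $c\,c_2\bigl(\log(e+\rho^{-1})\bigr)^{-N/2}$. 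Comparison with Theorem \ref{th:nec}(ii) then uses the elementary observation that $\log(e+\rho_T\rho^{-1})/\log(e+\rho^{-1})\to 1$ as $\rho\to 0$ with $\rho_T$ fixed, so for $\rho$ small enough the two logarithms are comparable up to a factor of $2$; the contradiction follows once $c\,c_2>2^{N/2}C$.

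The main technical point to verify carefully is that the constant $c_1$ in the Bishop--G\"unther inequality can be taken to depend only on $N$; this is exactly why $\rho_\infty$ in \eqref{eq:rhopr0def} was defined as the minimum of $\inj(M)/4$ and $\pi/(4\sqrt\kappa)$, since the latter bounds $\sqrt\kappa\,r$ by $\pi/4$, forcing the Jacobi-field factor $\sin(\sqrt\kappa r)/(\sqrt\kappa r)$ to be uniformly bounded below by $2\sqrt{2}/\pi$. Everything else is a routine radial integration and a straightforward comparison of logarithms.
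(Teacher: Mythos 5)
Your argument is correct and follows essentially the same route as the paper's own proof: integrate the radial lower bound for $u_0$ over $B(z_0,\rho)$ using a Jacobi-field/volume comparison bounded below on the scale set by $\rho_\infty$, compare against the necessary upper bound from Theorem \ref{th:nec} (ii)--(iii), and in the critical case send $\rho\to 0$ so that $\log(e+\rho_T\rho^{-1})/\log(e+\rho^{-1})\to 1$. (The paper works in normal coordinates with the bound \eqref{prep4} rather than polar coordinates and co-area, and your substitution should read $dr/r=-(er+1)\,ds$ rather than $dr/r=-(er+1)^{-1}\,ds$, but these are cosmetic and do not affect the argument.)
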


By Corollary \ref{cor:singnex}, 
we also obtain the nonexistence of solutions 
with functions in $L_{\uloc,\rho_\infty}^q$ as initial data. 
The following corollary generalizes the results by 
Weissler \cite{We80} and Brezis and Cazenave \cite{BC96} 
for $u_0\in L^q(\Omega)$ with $\Omega\subset \R^N$.

\begin{corollary}\label{cor:nex}
Let $N\geq1$. Assume one of the following:  
\begin{itemize}
\item[(i)]
$p>p_F$ and $1\leq q<N(p-1)/2$. 
\item[(ii)]
$p=p_F$ and $q=1$. 
\end{itemize}
Then there exists a nonnegative function 
$u_0\in L_{\uloc,\rho_\infty}^q(M)$ such that 
the problem \eqref{eq:fun} does not admit any solutions 
in $M\times[0,T)$ for any $T>0$. 
\end{corollary}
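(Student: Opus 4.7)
The plan is to reduce everything to Corollary \ref{cor:singnex} by writing down an explicit initial datum whose singularity is concentrated at a single point. Fix $z_0\in M$ and let $c>0$ be the constant from Corollary \ref{cor:singnex}. I would define
\[
u_0(x):=\left\{
\begin{aligned}
&c\, d(z_0,x)^{-\frac{2}{p-1}} &&\text{if } p>p_F, \\
&c\, d(z_0,x)^{-N}\bigl(\log(e+d(z_0,x)^{-1})\bigr)^{-\frac{N}{2}-1} &&\text{if } p=p_F,
\end{aligned}
\right.
\]
for $x\in B(z_0,\rho_\infty)$, extended by $u_0\equiv 1$ on $M\setminus B(z_0,\rho_\infty)$ (when this set is nonempty). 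Then $u_0$ satisfies the hypothesis \eqref{eq:u0la} of Corollary \ref{cor:singnex} with equality on $B(z_0,\rho_\infty)$ and is bounded outside, so that corollary immediately yields that \eqref{eq:fun} has no local-in-time solution. The entire remaining content of the proof is therefore the verification that $u_0\in L^q_{\uloc,\rho_\infty}(M)$.

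To bound the $L^q_{\uloc,\rho_\infty}$ norm I would split, for each $z\in M$, the integration region as $B(z,\rho_\infty)=(B(z,\rho_\infty)\cap B(z_0,2\rho_\infty))\cup(B(z,\rho_\infty)\setminus B(z_0,2\rho_\infty))$. On the second piece $u_0\equiv 1$, so its contribution is dominated by $\Vol(B(z,\rho_\infty))$, which by Bishop--Gromov volume comparison (using only the lower sectional curvature bound $-\kappa$) is uniformly bounded in $z$. On the first piece, the domain of integration is contained in $B(z_0,2\rho_\infty)$, and since $2\rho_\infty\le \tfrac12\inj(M)$ the exponential map at $z_0$ provides normal coordinates in which, thanks to $|\sec(M)|\leq\kappa$, the Riemannian volume form is comparable to Lebesgue measure. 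Hence
\[
\int_{B(z_0,2\rho_\infty)} u_0^q\, dV_g\le C\int_0^{2\rho_\infty}\bigl(u_0(r)\bigr)^q r^{N-1}\, dr,
\]
where $u_0(r)$ is the radial profile.

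A direct evaluation of the one-dimensional integral produces the exact integrability ranges demanded by the statement. In case (i) the integrand is $r^{N-1-2q/(p-1)}$, integrable near $0$ iff $q<N(p-1)/2$. In case (ii) (where $p_F-1=2/N$, hence $N(p-1)/2=1$ forces $q=1$) the integrand becomes $r^{-1}\bigl(\log(e+r^{-1})\bigr)^{-N/2-1}$, whose logarithmic exponent $\tfrac{N}{2}+1>1$ makes it integrable near $0$. Combined with the uniform far-field bound, this gives $\|u_0\|_{L^q_{\uloc,\rho_\infty}(M)}<\infty$, finishing the proof.

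The argument has no serious analytic obstacle; the main point is that the Euclidean threshold computation survives on $M$ because the curvature and injectivity-radius hypotheses on $M$ guarantee both a uniform volume upper bound $\Vol(B(z,\rho_\infty))\le C$ and a uniform comparability of $dV_g$ with Lebesgue measure on $B(z_0,2\rho_\infty)$. The only mild care needed is the splitting above, which ensures the singular and regular regimes of $u_0$ are estimated on disjoint domains.
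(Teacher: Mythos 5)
Your proposal is correct and follows essentially the same route as the paper: take $u_0$ equal (up to the constant from Corollary \ref{cor:singnex}) to the critical radial profile on $B(z_0,\rho_\infty)$, apply Corollary \ref{cor:singnex} for nonexistence, and verify $u_0\in L^q_{\uloc,\rho_\infty}(M)$ via a reduction to the one-dimensional integral $\int_0^{\rho_\infty} r^{N-1}\tilde u_0(r)^q\,dr$. The only cosmetic differences are that the paper extends $u_0$ by $0$ rather than $1$ outside $B(z_0,\rho_\infty)$ (which sidesteps your far-field volume estimate) and uses the already-established coarea inequality \eqref{ballcheck} in place of your direct normal-coordinate comparison.
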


Finally, we give a pointwise condition for existence. 
The proof of Theorem \ref{th:necshacri} shows the following: 

\begin{corollary}\label{cor:singex}
Let $N\geq1$, $p\geq p_F$ and $z_0 \in M$. 
Then there exists a constant $c>0$ depending only on $N$ and $p$ such that the following holds: 
For any nonnegative function $u_0$ satisfying 
\[
	u_0(x)\leq \left\{ 
	\begin{aligned}
	& c d(z_0,x)^{-\frac{2}{p-1}} 
	&&\mbox{ if }p>p_F, \\
	& c d(z_0,x)^{-N} 
	( \log (e + d(z_0,x)^{-1} ))^{-\frac{N}{2}-1} 
	&&\mbox{ if }p=p_F, 
	\end{aligned}
	\right.
\]
for any $x\in B(z_0,\rho_\infty)$ and $u_0\in L^\infty(M\setminus B(z_0,\rho_\infty))$ 
if $M\setminus B(z_0,\rho_\infty)\neq \emptyset$, 
the problem \eqref{eq:fun} admits a local-in-time solution. 
\end{corollary}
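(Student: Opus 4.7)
The plan is to reduce the corollary to the existence construction inside the proof of Theorem \ref{th:necshacri}. That theorem produces solutions for measures $\mu$ obeying an upper bound on $\sup_z \mu(B(z,\rho))$, and inspection of its proof shows that the existence part uses \emph{only this upper bound} (the lower bound is present to exhibit sharpness, not for existence). So it will suffice, for the measure $d\mu = u_0\,dV_g$ associated with a $u_0$ as in the statement, to verify
\[
	\sup_{z\in M}\mu(B(z,\rho)) \leq
	\begin{cases}
		C' c \,\rho^{N-\frac{2}{p-1}} & \text{if } p>p_F, \\[2pt]
		C' c \,(\log(e+\rho_T\rho^{-1}))^{-N/2} & \text{if } p=p_F,
	\end{cases}
\]
on $0<\rho<\rho_T$ for some $T>0$, with $C'$ depending only on $N$, $p$, and the curvature bound; then the existence assertion of Theorem \ref{th:necshacri} applies (with $c$ there replaced by $C'c$) and produces the desired local-in-time solution of \eqref{eq:fun}.

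The bulk of the work is the verification of this measure estimate, which I would carry out by splitting $B(z,\rho)$ into near and far pieces relative to $z_0$. For $\rho\leq\rho_T\leq\rho_\infty$ the ball $B(z_0,3\rho)$ lies well inside the injectivity radius at $z_0$, so geodesic polar coordinates combined with the Bishop-type volume comparison of Appendix \ref{sec:app-comp} bound $dV_g$ by $Cr^{N-1}\,dr\,d\theta$, with $C$ depending only on $N$ and $\kappa$. If $d(z,z_0)\leq 2\rho$ then $B(z,\rho)\subset B(z_0,3\rho)$, and for $p>p_F$
\[
	\int_{B(z_0,3\rho)} d(z_0,x)^{-\frac{2}{p-1}}\,dV_g \leq C\int_0^{3\rho} r^{N-1-\frac{2}{p-1}}\,dr \leq C''\,\rho^{N-\frac{2}{p-1}},
\]
which converges since $N-\frac{2}{p-1}>0$. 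If $d(z,z_0)>2\rho$ then $d(z_0,\cdot)\geq d(z_0,z)/2$ on $B(z,\rho)$, and combining this with $\Vol(B(z,\rho))\leq C\rho^N$ gives the same bound. The contribution from $M\setminus B(z_0,\rho_\infty)$, where only $u_0\in L^\infty$ is known, is at most $\|u_0\|_{L^\infty}\,C\rho^N$, which on $0<\rho<\rho_T$ is dominated by a constant (depending on $\|u_0\|_{L^\infty}$ and $\rho_T$) times $\rho^{N-\frac{2}{p-1}}$; by shrinking $T$ this constant can be absorbed.

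The critical case $p=p_F$ is the delicate step and the main obstacle. Here the near-$z_0$ integral reduces after polar coordinates to
\[
	\int_0^{3\rho} r^{-1}\bigl(\log(e+r^{-1})\bigr)^{-N/2-1}\,dr,
\]
and the substitution $s=\log(e+r^{-1})$ turns this into a tail integral of $s^{-N/2-1}$, producing a multiple of $(\log(e+(3\rho)^{-1}))^{-N/2}$, which is comparable to $(\log(e+\rho_T\rho^{-1}))^{-N/2}$ up to constants depending on $\rho_T$. The exponent $-N/2-1$ in the pointwise hypothesis is precisely what is needed for this tail to integrate to the critical density; the hard part is keeping track of all multiplicative constants (from volume comparison, from the change of variables, and from replacing $3\rho$ by $\rho_T\rho^{-1}$ inside the logarithm) so that the final constant can be made small enough to fit into the range of $c$ required by Theorem \ref{th:necshacri}.
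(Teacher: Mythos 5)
Your proposed reduction to Theorem \ref{th:necshacri} fails at the very first step, and the failure is not a matter of tracking constants but is structural. Theorem \ref{th:necshacri} is an \emph{existential} statement: it produces \emph{some} measure $\mu$ with the stated ball bounds that admits a solution, not a solution for \emph{every} such $\mu$. Your claim that ``inspection of its proof shows that the existence part uses only this upper bound'' is incorrect. The proof begins by \emph{setting} $d\mu = cf\,dV_g$ for the explicit radial profile $f$, and the whole supersolution machinery is built on that pointwise structure: $\overline{u}=2cU$ with $U=h^{-1}\bigl(\int_M K(x,y,t)\,h(f(y))\,dV_g(y)\bigr)$, the Jensen step $\int K\,d\mu \le c\,h^{-1}\bigl(\int K\,h(f)\bigr)$ uses $d\mu = cf\,dV_g$ with $h$ convex, and the estimate of $h(U)$ via Lemma \ref{lem:linheat} controls $\sup_z\int_{B(z,\rho)} h(f)$, which is a ball integral of the \emph{nonlinear transform} $h(f)$, not of $f$ itself. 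Knowing only $\sup_z \mu(B(z,\rho))$, i.e.\ ball integrals of $f$, gives you none of this. Indeed a clean sanity check is Theorem \ref{th:nex}: for $p\ge p_F$ the ball bound $\sup_z\mu(B(z,\rho))\le c\rho^{N-2/(p-1)}\bigl(\log(e+\rho^{-1})\bigr)^{-1/(p-1)}$ (which is strictly \emph{stronger} than what you propose to verify) still admits measures with no local solution, for every $c>0$. So no argument that inputs only $\sup_z\mu(B(z,\rho))$ can conclude local existence in the critical and supercritical range.

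The near/far splitting and the volume-comparison estimates you carry out are fine in themselves, and your observation that the $L^\infty$ tail outside $B(z_0,\rho_\infty)$ contributes an error that can be absorbed by shrinking $T$ is exactly the right instinct; the paper implements this by writing $u_0(x)\le c f(x)+\tilde C$ and adding $2\tilde C$ to the supersolution, then taking $T\le\min\{1,C^{-1}\tilde C^{-(p-1)}\}$. But the route has to be: use the \emph{pointwise} hypothesis $u_0 \le cf + \tilde C$, define $\overline{u}(x,t):=2cU(x,t)+2\tilde C$ with $U=h^{-1}\bigl(\int_M K\,h(f)\bigr)$, and rerun the Jensen/$J(t)$ estimates of Subsection~\ref{subsec:cri} directly, rather than reduce to the ball-measure condition. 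Once you accept that the pointwise bound (not the ball bound) is the operative hypothesis, the remaining work is exactly the computation you sketched for the log tail integral, now applied to $h(f)$ instead of $f$.
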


\begin{remark}\label{rem:Di}
Analogous results to Theorems \ref{th:necsha}, \ref{th:necshacri} and \ref{th:suf} 
and Corollaries \ref{cor:ex} and \ref{cor:singex} 
hold for the problem \eqref{eq:main} in a smooth domain $\Omega\subset M$ 
with $u=0$ on $\partial \Omega$, 
since the heat kernel on $M$ dominates the Dirichlet heat kernel on $\Omega$ 
(see \cite[Page 188, Theorem 4]{Chbook}). 
We note that the analog of Corollary \ref{cor:ex} for $\Omega$ 
is stronger than the results of \cite{Pu11,Pu12o,Pu14}.
\end{remark}

\section{Necessary conditions}\label{sec:nec}

Fix $0<T<\infty$. 
In Lemma \ref{lem:exini}, 
we show the existence and uniqueness of the initial trace $\mu$ of $u$ 
based on the argument of \cite[Lemma 2.3]{HI18} 
due to the weak compactness of Radon measures and 
some limiting argument. 
In Lemma \ref{lem:young}, to estimate $\mu$, 
we substitute appropriate test functions into 
the following weak form of \eqref{eq:main}: 
\[
	-\int_\tau^T \int_{M} u(\varphi_t+\Delta \varphi) dV_{g} dt
	=
	\int_\tau^T \int_{M} u^p \varphi dV_{g}  dt
	+ \int_{M} u(\,\cdot\,,\tau) \varphi(\,\cdot\,,\tau) dV_{g}, 
\]
where $\varphi\in C_0^{2,1}(M\times[0,T))$ and $0<\tau<T$. 
Then by the comparison theorems in Riemannian geometry 
(see Appendix \ref{sec:app-comp}), 
we obtain estimates of $\mu ( B(z,c \rho) )$ with some $0<c<1$. 
By showing a generalization of the Besicovitch type covering theorem 
(see Appendix \ref{sec:appa}), 
we also obtain the desired estimates of $\mu ( B(z,\rho) )$. 
We note that the covering theorems used in this paper 
seem to be of independent interest.

\begin{lemma}\label{lem:exini}
Let $u\in C^{2,1}(M\times(0,T))$ be a nonnegative function satisfying \eqref{eq:fujita}. 
Then there exists a nonnegative Radon measure $\mu$ uniquely determined by $u$ 
such that $u$ satisfies \eqref{eq:ini} 
and that $u$ is a solution of \eqref{eq:main} 
in the sense of Definition \ref{def:sol}. 
\end{lemma}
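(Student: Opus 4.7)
The plan is to mirror the Baras--Pierre / Hisa--Ishige construction of an initial trace, which is essentially metric in nature: because $u \geq 0$ satisfies $u_{t}-\Delta u = u^{p}\geq 0$, the only analytic inputs needed are integration by parts against compactly supported cutoffs and weak-$*$ compactness of Radon measures, both available on $M$ without any geometric input. I would organize the proof in three parts: (a) a uniform local $L^{1}$ bound for $u(\cdot,t)$ as $t\downarrow 0$; (b) time-integrability of $u^{p}$ against compactly supported weights; and (c) identification of the unique weak-$*$ limit.

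For (a), fix $\tau\in(0,T)$ and a compact $K\subset M$, choose $\varphi\in C_{c}^{\infty}(M)$ with $\varphi\geq \mathbf{1}_{K}$, and write $A:=\supp\varphi$. Pairing $\varphi$ (time-independent) with \eqref{eq:fujita} and integrating on $M\times(s,\tau)$ gives
\[
\int_{M}\varphi\, u(\cdot,s)\,dV_{g} + \int_{s}^{\tau}\!\!\int_{M}\varphi u^{p}\,dV_{g}\,dt \;=\; \int_{M}\varphi\, u(\cdot,\tau)\,dV_{g} - \int_{s}^{\tau}\!\!\int_{M}u\,\Delta\varphi\,dV_{g}\,dt.
\]
Dropping the nonnegative $u^{p}$-term and using $|\Delta\varphi|\leq \|\Delta\varphi\|_{\infty}\mathbf{1}_{A}$, the quantity $\Phi(s_{0}):=\sup_{s_{0}\leq t\leq\tau}\int_{A}u(\cdot,t)\,dV_{g}$ satisfies a self-improving inequality of the form $\Phi(s_{0})\leq C_{\tau}+\|\Delta\varphi\|_{\infty}(\tau-s_{0})\Phi(s_{0})$, where $C_{\tau}:=\int_{M}\varphi\, u(\cdot,\tau)\,dV_{g}<\infty$. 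Choosing $\tau-s_{0}<1/(2\|\Delta\varphi\|_{\infty})$ absorbs the sup into $\Phi(s_{0})\leq 2C_{\tau}$; a finite iteration of this step over $(0,\tau]$ removes the smallness condition and yields $\sup_{0<t\leq\tau}\int_{K}u(\cdot,t)\,dV_{g}<\infty$ for every compact $K$ and every $\tau<T$. Plugging this bound back into the identity proves (b).

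For (c), note that for each $\psi\in C_{c}^{2}(M)$ the map $t\mapsto F_{\psi}(t):=\int_{M}\psi u(\cdot,t)\,dV_{g}$ is $C^{1}$ on $(0,T)$ with
\[
F_{\psi}'(t) \;=\; \int_{M}u\,\Delta\psi\,dV_{g}+\int_{M}\psi u^{p}\,dV_{g},
\]
and by (a)--(b) this derivative lies in $L^{1}(0,\tau)$ for every $\tau<T$. Hence $F_{\psi}$ extends continuously to $t=0$. Combining the $L^{1}_{\loc}$ bound of (a) with Banach--Alaoglu produces, along any sequence $t_{n}\downarrow 0$, a weak-$*$ accumulation point of the Radon measures $u(\cdot,t_{n})\,dV_{g}$. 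The existence of $\lim_{t\to 0}F_{\psi}(t)$ forces every such accumulation point to coincide on $C_{c}^{2}(M)$, hence on $C_{0}(M)$ by density. This defines a unique nonnegative Radon measure $\mu$ with $u(\cdot,t)\,dV_{g}\rightharpoonup\mu$ as $t\downarrow 0$, which is exactly \eqref{eq:ini}.

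I expect (a) to be the only delicate step, specifically making the absorption inequality for $\Phi(s_{0})$ genuinely uniform in $s_{0}>0$ and arranging the finite iteration to cover any $\tau<T$. The only Riemannian input is the existence of compactly supported cutoffs with bounded Laplacian---trivial under the running hypotheses---so no serious geometry enters here; the assumptions $|\sec|\leq\kappa$ and $\inj(M)>0$ only become essential in the next lemma, when the quantitative bounds on $\mu$ in Theorem~\ref{th:nec} are extracted using the heat-kernel / Laplacian comparison theorems collected in Appendix~\ref{sec:app-comp}.
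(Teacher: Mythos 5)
Your step (a) does not close. After dropping the nonnegative source term and using $|\Delta\varphi|\leq\|\Delta\varphi\|_\infty\mathbf{1}_A$, the identity gives
\[
\int_M \varphi\, u(\cdot,s)\,dV_g \;\leq\; C_\tau + \|\Delta\varphi\|_\infty \int_s^\tau \int_A u(\cdot,t)\,dV_g\,dt.
\]
The left-hand side controls $\int_M \varphi\,u(\cdot,s)$ (in particular $\int_K u(\cdot,s)$), but not $\int_A u(\cdot,s)$: any smooth compactly supported $\varphi$ with $\supp\varphi=A$ necessarily satisfies $\varphi<1$ on a neighborhood of $\partial A$ inside $A$, so $\int_M\varphi u<\int_A u$ in general. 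Hence $\Phi(s_0):=\sup_{s_0\leq t\leq\tau}\int_A u(\cdot,t)\,dV_g$ never appears on the left, and the asserted absorption $\Phi(s_0)\leq C_\tau+\|\Delta\varphi\|_\infty(\tau-s_0)\Phi(s_0)$ is not what the estimate yields. Nesting cutoffs does not repair this: it only produces a chain $\Phi_j\leq C_j+c_j(\tau-s_0)\Phi_{j+1}$ that does not terminate, while the last $\Phi_j$ remains uncontrolled.

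The two standard fixes both appear in the paper, and both give exactly the uniform local $L^1$ bound you want. One is the Baras--Pierre device: do not drop $u^p$, but test against $\varphi=\psi^{2p/(p-1)}$ for a cutoff $\psi$, and use Young's inequality to absorb $u\,\Delta\varphi$ into $\varphi\,u^p$ plus a bounded residual; this is exactly the computation in Lemma~\ref{lem:young} and directly yields $\sup_{0<s<\tau}\int_K u(\cdot,s)\,dV_g<\infty$. The other, which is what the proof of Lemma~\ref{lem:exini} actually uses, is the heat kernel: from $u\geq0$, $u_t-\Delta u\geq0$, and minimality of the heat kernel $K$ one obtains $u(x,t)\geq\int_M K(x,y,t-\tau)u(y,\tau)\,dV_g(y)$; then the positivity of $K$ (connectedness of $M$) together with the finiteness of $u(z_0,t_0)$ for a fixed $(z_0,t_0)$ gives $\int_{M_0}u(\cdot,t)\,dV_g\leq C$ for any compact $M_0$ and small $t$, with no Young's inequality and no geometric hypothesis beyond completeness and connectedness.

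Granting a corrected (a), your steps (b) and (c) are sound. In fact the uniqueness argument in (c) is a nice alternative to the paper's: rather than comparing two subsequential weak-$*$ limits through the heat kernel representation and the uniform convergence $\int_M K(y,x,t)\phi(x)\,dV_g(x)\to\phi(y)$, you observe that $F_\psi'\in L^1(0,\tau)$ forces $\lim_{t\to0}F_\psi(t)$ to exist for every $\psi\in C_c^2(M)$, which already pins down any weak-$*$ accumulation point.
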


\begin{proof}
For $0<\tau<T$, 
we have $u(\,\cdot\,,t)\to u(\,\cdot\,,\tau)$ and 
$\int_M K(\,\cdot\,,y,t-\tau) u(y,\tau) dV_g(y) \to u(\,\cdot\,,\tau)$ 
in $L^2_\loc(M)$ as $t\to\tau$. 
These together with $u\geq0$, $u_t\geq \Delta u$ and 
the minimality of $K$ in \cite[Theorem 8.1]{Grbook} 
show that 
\begin{equation}\label{eq:uKta}
	u(x,t)
	\geq 
	\int_M  K(x,y,t-\tau) u(y,\tau) dV_g(y)
\end{equation}
for $x\in M$ and $0<\tau<t<T$. 
Fix $z_0\in M$ and $(2/3)T<t_0<T$. 
Let $M_0$ be a compact subset of $M$. 
Since $M$ is connected, by \cite[Corollary 8.12]{Grbook}, 
we have $K(x,y,t)>0$ for $x,y\in M$ and $t>0$. 
Thus, 
\[
\begin{aligned}
	u(z_0,t_0)
	&\geq 
	\int_{M_0}  K(z_0,y,t_0-t) u(y,t) dV_g(y) \\
	&\geq 
	\inf_{y\in M_0, \frac{1}{3}T<s<T} K(z_0,y,s) 
	\int_{M_0} u(\,\cdot\,,t) dV_g
\end{aligned}
\]
for $0<t<T/3$, and so 
\begin{equation}\label{eq:M0usup}
	\int_{M_0} u(\,\cdot\,,t) dV_g \leq C
	\quad \mbox{ for } 0<t<\frac{T}{3},  
\end{equation}
where $C>0$ is a constant depending on $z_0$, $t_0$, $M_0$ and $T$. 
Define a family of nonnegative Radon measures $\{\mu_t\}_{t\in(0,T/3)}$ on $M$ 
by $\mu_t(A):= \int_A u(\,\cdot\,,t) dV_g$. 
Let $\{t_k\}_{k=1}^\infty\subset(0,T/3)$ satisfy $t_k\to 0$. 
Then, we obtain $\sup_{k\geq1} \mu_{t_k}(M_0) <\infty$ 
for any compact subset $M_0\subset M$. 
By the weak compactness of Radon measures \cite[THEOREM 4.4]{Sibook}, 
there exist a subsequence $\{\mu_{t_k'}\}_{k=1}^\infty$
and a nonnegative Radon measure $\mu$ on $M$ such that 
\[
	\lim_{k\to\infty} \int_M \psi  d\mu_{t_k'}
	= \int_M \psi d\mu
	\quad 
	\mbox{ for any }\psi\in C_0(M). 
\]

We claim that $\mu_{t_k}\to\mu$ without taking a subsequence. 
To prove this, take a subsequence 
$\{\mu_{t_k''}\}_{k=1}^\infty$ of $\{\mu_{t_k}\}_{k=1}^{\infty}$ arbitrary. 
Then, by the same reason as above, 
there exist a subsequence $\{\mu_{t_k'''}\}_{k=1}^\infty$ 
of $\{\mu_{t_k''}\}_{k=1}^\infty$ and 
a nonnegative Radon measure $\nu$ on $M$ such that  
$\lim_{k\to \infty} \int_M \psi  d\mu_{t_k'''}
= \int_M \psi d\nu$
for any $\psi\in C_0(M)$. 
If $\mu=\nu$, then the limit 
$\mu_{t_k}\to\mu$ immediately follows. 
Let $\phi\in C_0(M)$ with $\phi\geq0$. 
We take a subsequence again if necessary 
and suppose $t_k'>t_k''$ for $k\geq1$. 
From \eqref{eq:uKta} with $\tau=t_k''$ and $t=t_k'$, 
it follows that 
\[
\begin{aligned}
	\int_M \phi d\mu_{t_k'} & = 
	\int_M \phi(x) u(x,t_k')  dV_g(x)  \\
	&\geq 
	\int_M \phi(x) \int_M K(x,y,t_k'-t_k'') u(y,t_k'')  dV_g(y) dV_g(x) \\
	&= 
	\int_M \int_M K(y,x,t_k'-t_k'') \phi(x)  dV_g(x) d\mu_{t_k''}(y) \\
	&\geq 
	\int_{\supp \phi} \int_M K(y,x,t_k'-t_k'') \phi(x)  dV_g(x)  d\mu_{t_k''}(y)
\end{aligned}
\]
and that 
\[
\begin{aligned}
	\int_M \phi d\mu_{t_k'} 
	&\geq 
	\int_{\supp \phi} \phi d\mu_{t_k''} 
	- \int_{\supp \phi} 
	\left| \int_M K(y,x,t_k'-t_k'') \phi(x)  dV_g(x) - \phi(y) \right| d\mu_{t_k''}(y) \\
	&\geq
	\int_{\supp \phi} \phi d\mu_{t_k''} 
	- \mu_{t_k''}(\supp \phi)
	\sup_{y\in M}\left| \int_M K(y,x,t_k'-t_k'') \phi(x)  dV_g(x) - \phi(y) \right|. 
\end{aligned}
\]
By the uniform continuity of $\phi$ on $M$ 
and \cite[Theorem 7.13]{Grbook}, 
we have 
\[
	\lim_{k\to\infty} \int_M K(y,x,t_k'-t_k'') \phi(x)  dV_g(x)= \phi(y)
\]
uniformly for $y\in M$. 
By the compactness of $\supp \phi$ and \eqref{eq:M0usup}, 
we have $\sup_{k\geq1} \mu_{t_k''}(\supp \phi) <\infty$. 
Therefore, letting $k\to\infty$ yields 
$\int_M \phi d\mu \geq \int_{\supp \phi} \phi d\nu = \int_M \phi d\nu$. 
Interchanging $t_k'$ and $t_k''$ gives 
$\int_M \phi d\mu = \int_M \phi d\nu$ 
for any $\phi\in C_0(M)$ with $\phi\geq0$. Thus, 
$\int_M \psi d\mu = \int_M \psi d\nu$  
for any $\psi\in C_0(M)$. 
This implies $\mu=\nu$, 
and so $\mu_{t_k}\to\mu$ without taking a subsequence. 
Then \eqref{eq:ini} holds, and so $u$ is a solution of \eqref{eq:main} 
in the sense of Definition \ref{def:sol}. 
The uniqueness of $\mu$ also follows from the above argument. 
\end{proof}

We prepare an estimate for proving Theorem \ref{th:nec}. 
In what follows, 
$f^+:=\max\{f,0\}$ and $f^-:=\max\{-f,0\}$ denote the positive part 
and the negative part of $f$, respectively.

\begin{lemma}\label{lem:young}
Let $p>1$ and let $u$ be a solution of \eqref{eq:main}. 
Then there exists a constant $C>0$ depending only on $p$ such that 
the initial data $\mu$ of $u$ satisfies 
\begin{equation}\label{eq:phi}
\begin{aligned}
	\int_{M} \phi(\,\cdot\,,0)^\frac{2p}{p-1} d\mu
	\leq
	C \int_0^T \int_{M}  
	\phi^\frac{p}{p-1}  
	\left(  \left(\phi_t + \Delta \phi\right)^{-} \right)^\frac{p}{p-1} dV_{g} dt
\end{aligned}
\end{equation}
for any $\phi \in C^\infty_0(M \times [0,T))$ with $\phi\geq0$. 
\end{lemma}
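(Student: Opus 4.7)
The plan is to run the standard Baras--Pierre test-function argument, using the weak form displayed just before Lemma \ref{lem:young} and choosing the test function $\varphi = \phi^{2p/(p-1)}$, followed by Young's inequality to absorb the $u^p$--term.

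First I would take $\varphi \in C^{2,1}_0(M \times [0,T))$ nonnegative and pass to the limit $\tau \to 0$ in the weak form
\[
-\int_\tau^T \int_{M} u(\varphi_t+\Delta \varphi)\,dV_{g}\,dt
=\int_\tau^T \int_{M} u^p \varphi\,dV_{g}\,dt
+ \int_{M} u(\,\cdot\,,\tau) \varphi(\,\cdot\,,\tau)\,dV_{g}.
\]
Since $\varphi(\,\cdot\,,\tau) \to \varphi(\,\cdot\,,0) \in C_0(M)$ uniformly as $\tau \to 0$ and the initial trace $\mu$ was constructed in Lemma \ref{lem:exini} precisely so that $u(\,\cdot\,,t)\,dV_g \to d\mu$ in the weak sense against $C_0(M)$, the boundary term converges to $\int_M \varphi(\,\cdot\,,0)\,d\mu$. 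Dropping the nonnegative term $\int_0^T\int_M u^p\varphi\,dV_g\,dt$ on the RHS and splitting $\varphi_t + \Delta\varphi = (\varphi_t+\Delta\varphi)^+ - (\varphi_t+\Delta\varphi)^-$ gives
\[
\int_M \varphi(\,\cdot\,,0)\,d\mu + \int_0^T\!\int_M u^p\varphi\,dV_g\,dt
\leq \int_0^T\!\int_M u\,(\varphi_t+\Delta\varphi)^-\,dV_g\,dt.
\]

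Next I would specialize to $\varphi = \phi^{\alpha}$ with $\alpha := 2p/(p-1)$. A direct computation gives
\[
\varphi_t + \Delta\varphi = \alpha\,\phi^{\alpha-1}(\phi_t + \Delta\phi) + \alpha(\alpha-1)\,\phi^{\alpha-2}|\nabla\phi|^2,
\]
and since the gradient term is nonnegative, the pointwise bound $(a+b)^- \leq a^-$ for $b\geq 0$ yields $(\varphi_t+\Delta\varphi)^- \leq \alpha\,\phi^{\alpha-1}(\phi_t+\Delta\phi)^-$. Then I apply Young's inequality with conjugate exponents $p$ and $p/(p-1)$ to the integrand:
\[
u\,(\varphi_t+\Delta\varphi)^{-} = \bigl(\varphi^{1/p}u\bigr)\cdot\bigl(\varphi^{-1/p}(\varphi_t+\Delta\varphi)^{-}\bigr)
\leq \tfrac12 u^p\varphi + C\,\varphi^{-\frac{1}{p-1}}\bigl((\varphi_t+\Delta\varphi)^-\bigr)^{\frac{p}{p-1}}.
\]
Absorbing the $u^p\varphi$ term into the LHS and substituting the bound for $(\varphi_t+\Delta\varphi)^-$ together with $\varphi^{-1/(p-1)} = \phi^{-\alpha/(p-1)}$, the exponents on $\phi$ collapse as
\[
-\frac{\alpha}{p-1} + \frac{(\alpha-1)p}{p-1} = \alpha - \frac{p}{p-1} = \frac{p}{p-1},
\]
which is exactly the power of $\phi$ appearing in the desired inequality. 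Finally, $\varphi(\,\cdot\,,0) = \phi(\,\cdot\,,0)^{\alpha} = \phi(\,\cdot\,,0)^{2p/(p-1)}$, completing the estimate.

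The main technical obstacle is that $\phi^\alpha$ need not belong to $C^{2,1}$ when $\alpha = 2p/(p-1)$ is not an integer, because $\phi$ may touch zero. I would handle this by the standard regularization $\varphi_\delta := (\phi + \delta\chi)^\alpha - (\delta\chi)^\alpha$, where $\chi \in C^\infty_0(M \times [0,T))$ is a fixed cutoff equal to $1$ on $\supp\phi$; this is an admissible test function, and the pointwise bound on $(\varphi_{\delta,t} + \Delta\varphi_\delta)^-$ is obtained by the same computation with an error that vanishes as $\delta \to 0$, after which monotone/dominated convergence delivers the estimate stated for $\phi$ itself. The rest of the argument (Young's inequality and the exponent bookkeeping) is routine.
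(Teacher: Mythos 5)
Your proposal is correct and follows essentially the same Baras--Pierre test-function argument as the paper: substitute $\varphi=\phi^{2p/(p-1)}$ into the weak form, drop the nonnegative gradient term in $\Delta\varphi$, apply Young's inequality to absorb $u^p\varphi$, and send $\tau\to 0$ using the initial-trace convergence of Lemma \ref{lem:exini}. The regularization you add to handle possible non-smoothness of $\phi^{\alpha}$ is in fact unnecessary, since $\alpha=2p/(p-1)>2$ and $\phi\geq 0$ smooth (so $\nabla\phi=0$ on $\{\phi=0\}$) already guarantee $\phi^{\alpha}\in C^{2,1}_0$; the paper omits this point, and your extra step, while superfluous, does no harm.
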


\begin{proof}
Let $\varphi\in C_0^{2,1}(M\times[0,T))$ and $0<\tau<T$. 
Multiplying \eqref{eq:fujita} by $\varphi$ and 
using integration by parts, we have 
\[
	-\int_\tau^T \int_{M} u(\varphi_t+\Delta \varphi) dV_{g} dt
	=
	\int_\tau^T \int_{M} u^p \varphi dV_{g}  dt
	+ \int_{M} u(\,\cdot\,,\tau) \varphi(\,\cdot\,,\tau) dV_{g}.
\]
Let $\phi \in C^\infty_0(M \times [0,T))$ with $\phi\geq0$. 
We take $\varphi=\phi^{2p/(p-1)}$. 
Then, 
\[
\begin{aligned}
	&\varphi_t=\frac{2p}{p-1} \phi^\frac{p+1}{p-1} \phi_t, \\
	&\Delta \varphi =
	\frac{2p(p+1)}{(p-1)^2} \phi^{\frac{2}{p-1}} |\nabla \phi|^2
	+ \frac{2p}{p-1} \phi^\frac{p+1}{p-1} \Delta \phi
	\geq \frac{2p}{p-1} \phi^\frac{p+1}{p-1} \Delta \phi, 
\end{aligned}
\]
and so 
\[
\begin{aligned}
	\int_\tau^T \int_{M} u^p \phi^\frac{2p}{p-1} dV_{g} dt
	+ \int_{M} u(\,\cdot\,,\tau) \phi(\,\cdot\,,\tau)^\frac{2p}{p-1} dV_{g} 
	&\leq
	- \int_\tau^T \int_{M} u\phi^\frac{p+1}{p-1}
	\frac{2p}{p-1} ( \phi_t + \Delta \phi ) dV_{g} dt \\
	&\leq
	\int_\tau^T \int_{M} u\phi^\frac{p+1}{p-1}
	\frac{2p}{p-1} (\phi_t + \Delta \phi )^{-} dV_{g} dt. 
\end{aligned}
\]
By Young's inequality, we have 
\[
	u\phi^\frac{p+1}{p-1} \frac{2p}{p-1} (\phi_t + \Delta \phi )^{-}
	\leq
	u^p \phi^\frac{2p}{p-1}
	+ C \phi^\frac{p}{p-1} 
	\left(  \left(\phi_t + \Delta \phi\right)^{-} \right)^\frac{p}{p-1}. 
\]
Thus, 
\[
	\int_{M} u(\,\cdot\,,\tau) \phi(\,\cdot\,,\tau)^\frac{2p}{p-1} dV_{g}
	\leq
	C \int_\tau^T \int_{M} \phi^\frac{p}{p-1}  
	\left(  \left(\phi_t + \Delta \phi\right)^{-} \right)^\frac{p}{p-1} dV_{g} dt. 
\]
By using \eqref{eq:ini} and letting $\tau\to0$, 
there exists a constant $C>0$ depending only on $p$ such that 
\eqref{eq:phi} holds. 
\end{proof}

In the rest of this section, we prove Theorem \ref{th:nec} 
by choosing appropriate $\phi$ in \eqref{eq:phi}.

\begin{proof}[Proof of Theorem \ref{th:nec} (i) and (iii)]
We take $\eta\in C^\infty(\R)$ such that  
$0\leq \eta \leq 1$, $\eta'\geq 0$, 
$\eta(\tau)=0$ for $\tau \leq 0$ and 
$\eta(\tau)=1$ for $\tau \geq 1$. 
Fix $z\in M$. For $0<\rho< \rho_T$, 
we set 
\[
	\phi(x,t):=\eta(\Phi(x,t)), \qquad 
	\Phi(x,t):= \frac{\rho^2}{d(x,z)^2+t} -1,
\]
for $(x,t)\in M\times [0,\infty)$. 
Since $\rho_T\leq \sqrt{T}$, we have 
$\Phi(x,T)\leq \rho_T^2T^{-1}-1\leq0$. 
Then $\phi(\,\cdot\,,T)=0$, and so $\phi\in C^\infty_0(M\times[0,T))$. 
Let 
\[
	D:= \{ (x,t)\in M\times[0,\infty); \rho^2 /2< d(x,z)^2 +t < \rho^2\}. 
\]
Note that $d(\,\cdot\,,z)$ is smooth on $B(z,\inj(M))\setminus\{z\}$ and that
\begin{equation}\label{eq:inc}
\begin{aligned}
	&(M\times [0,\infty)) \setminus D 
	\subset \{ (x,t)\in M\times[0,\infty); \eta'(\Phi(x,t))= 0\}, \\
	&\{(x,t)\in M\times[0,\infty); \eta'(\Phi(x,t))\neq 0\} 
	\subset D  \subset B(z,\inj(M)) \times [0,\infty). 
\end{aligned}
\end{equation}

From $\Phi_t = -\rho^2 (d(x,z)^2+t)^{-2}$, it follows that 
\begin{equation}\label{202009050046-1}
	\phi_t = (\eta'\circ \Phi)\Phi_t \geq
	-C \rho^2 (d(x,z)^2+t)^{-2} 
	\quad \mbox{ for } (x,t)\in D. 
\end{equation}
Direct computations yield 
\[
\begin{aligned}
	\nabla \Phi &= -2\rho^2 (d(x,z)^2+t)^{-2} d(x,z) \nabla d(x,z), \\
	\Delta \Phi &= 8\rho^2 (d(x,z)^2+t)^{-3} d(x,z)^2 |\nabla d(x,z)|^2 \\
	&\quad -2\rho^2 (d(x,z)^2+t)^{-2} |\nabla d(x,z)|^2
	- 2 \rho^2 (d(x,z)^2+t)^{-2} d(x,z) \Delta d(x,z). 
\end{aligned}
\]
Then by $|\nabla d(\,\cdot\,,z)|=1$, we have 
\[
\begin{aligned}
	\Delta \phi &=
	(\eta''\circ \Phi) |\nabla \Phi|^2
	+ (\eta'\circ \Phi) \Delta \Phi   \\
	&\geq
	-C |\nabla \Phi|^2 -C (\Delta \Phi)^{-} \\
	&\geq
	-C\rho^2 (d(x,z)^2+t)^{-2} 
	-C \rho^2 (d(x,z)^2+t)^{-2} d(x,z) (\Delta d(x,z))^{+} 
	\quad \mbox{ for }(x,t)\in D. 
\end{aligned}
\]

We claim that 
\begin{equation}\label{202009050046-2}
	\Delta \phi \geq -C \rho^2 (d(x,z)^2+t)^{-2}.
\end{equation}
To show this, it suffices to prove 
\begin{equation}\label{eq:deld}
	\Delta d(x,z) \leq Cd(x,z)^{-1}
	\quad \mbox{ for }x\in B(z,\rho)\setminus\{z\}. 
\end{equation}
Let $r(x):=d(x,z)$. 
Remark that $0<r<\rho<\rho_T$. 
By the assumption on $M$, 
we can apply \eqref{prep1} and we have 
$\Delta r(x)\leq 2(N-1)r^{-1}(x)$ 
for $x\in B(z,\rho)\setminus\{z\}$, 
where we used $\rho_T\leq \pi/(4\sqrt{\kappa})$. 
Hence \eqref{202009050046-2} follows.

By \eqref{202009050046-1} and \eqref{202009050046-2}, we obtain 
$(\phi_t + \Delta \phi)^{-}\leq C \rho^2 (d(x,z)^2+t)^{-2}$. 
This together with Lemma \ref{lem:young} and $\phi\leq1$ shows that 
\begin{equation}\label{eq:youngmd}
\begin{aligned}
	\int_{M} \phi(\,\cdot\,,0)^\frac{2 p}{p-1} d\mu
	\leq
	C \rho^\frac{2p}{p-1}
	\iint_D (d(x,z)^2+t)^{-\frac{2p}{p-1}} dV_{g}(x) dt, 
\end{aligned}
\end{equation}
where $C>0$ is a constant depending only on $N$ and $p$. 
Since $\phi(x,0) = 1$ for $x\in B(z,\rho/2)$, 
the left-hand side of \eqref{eq:youngmd} can be estimated as 
\[
	\int_{M} \phi(\,\cdot\,,0)^\frac{2p}{p-1} d\mu \geq \mu(B(z,\rho/2)).
\]
We estimate the right-hand side of \eqref{eq:youngmd} 
by using Riemannian normal coordinates 
$(x_1,\ldots,x_N)$ centered at $z$, 
where we identify $x\in B(z,\inj(M))$ 
with $(x_1,\ldots,x_N)$. 
In these coordinates, we have 
\[
\begin{aligned}
	\iint_D (d(x,z)^2+t)^{-\frac{2p}{p-1}} dV_{g}(x) dt 
	=&\iint_{\tilde D}
	(|x|^2+t)^{-\frac{2p}{p-1}} \sqrt{\det(g_{ij}(x))}dx dt
\end{aligned}
\]
with $\tilde D:=\{(x,t)\in \R^{N}\times[0,\infty); 
\rho^2/2 \leq |x|^2 +t \leq \rho^2\}$. 
By the assumption on $M$, 
we can apply the volume comparison theorem \eqref{prep2} and have 
$\sqrt{\det(g_{ij}(x))}\leq 2^{N-1}$ 
for $x\in B(z,\rho)\setminus\{z\}$, 
where we used $\rho_T\leq \pi/(4\sqrt{\kappa})$. 
Thus, 
\[
\begin{aligned}
	\iint_{\tilde D}
	(|x|^2+t)^{-\frac{2p}{p-1}} \sqrt{\det(g_{ij})}dx dt 
	&\leq 
	C \iint_{\tilde D}
	(|x|^2+t)^{-\frac{2p}{p-1}}  dx dt \\
	&\leq 
	2C \iint_{ \{ |x|^2 +s^2 \leq \rho^2\} }
	(|x|^2+s^2)^{-\frac{2p}{p-1}} s dx ds \\
	&\leq 
	C \iint_{ \{ |x|^2 +s^2 \leq \rho^2\} }
	(\sqrt{|x|^2+s^2} )^{-\frac{4p}{p-1}+1} dx ds 
	\leq 
	C \rho^{N+2-\frac{4p}{p-1}}. 
\end{aligned}
\]
Hence, for all $p>1$, we obtain 
\begin{equation}\label{eq:must}
	\mu(B(z,\rho/2)) \leq C \rho^{N-\frac{2}{p-1}}
	\quad 
	\mbox{ for any }0<\rho<\rho_T, 
\end{equation}
where $C>0$ is a constant depending only on $N$ and $p$. 
Fix $z'\in M$. 
Since $\rho_T\leq R'$ (see \eqref{sp_rad} for the definition of $R'$), 
we can use Theorem \ref{half_ball} and say that 
there exists a number $k_{1}$ depending only on $N$ such that $B(z',\rho)$ can be covered 
by at most $k_{1}$ balls with radii $\rho/2$. 
This together with \eqref{eq:must} implies 
\[
	\mu(B(z',\rho)) \leq k_{1}C \rho^{N-\frac{2}{p-1}}
	\quad 
	\mbox{ for any }0<\rho<\rho_T. 
\]
This shows (i) and (iii). 
\end{proof}

We improve  \eqref{eq:must} in the case $p=p_F$. 

\begin{proof}[Proof of Theorem \ref{th:nec} (ii)]
We take $\eta\in C^\infty(\R)$ 
as in the proof of Theorem \ref{th:nec} (i) and (iii). 
Fix $z\in M$. 
For $0<\rho<\rho_T$, 
we set 
\[
	\phi(x,t):=\eta(\Phi(x,t)), \qquad 
	\Phi(x,t) :=  2 \left( \log\left( e+\frac{16\rho_T^2}{\rho^2} \right) \right)^{-1}
	\log\left( e+\frac{\rho_T^2}{d(x,z)^2+ t} \right)  - 1,
\]
for $(x,t)\in M\times [0,\infty)$. 
Remark that $\phi(\,\cdot\,,T)=0$. Indeed, this follows from 
\[
\begin{aligned}
	\Phi(x,T)\leq 
	2 ( \log( e+16 ) )^{-1}
	\log( e+ 1)  - 1 \leq 0. 
\end{aligned}
\]
Let 
\[
	D' := \left\{ (x,t)\in M\times[0,\infty); 
	\rho^2/16 < d(x,z)^2 +t 
	< \rho_T^2( (e+16\rho_T^2\rho^{-2})^\frac{1}{2} -e  )^{-1} \right\}. 
\]
By $(e+ 16 \rho_T^2\rho^{-2})^{1/2} -e>1$,  
we have $\rho_T ( (e+ 16 \rho_T^2\rho^{-2})^{1/2} -e  )^{-1/2}<\rho_T\leq \inj(M)$. 
Therefore, \eqref{eq:inc} holds with $D$ replaced by $D'$. 
Note that $d(\,\cdot\,,z)$ is smooth on $B(z,\inj(M))\setminus\{z\}$. 
By 
\[
	\Phi_t = -2\rho_T^2(\log(e+16\rho_T^2\rho^{-2}))^{-1} 
	(e(d^2+t)+\rho_T^2)^{-1} (d^2+t)^{-1}, 
\]
we have 
\[
	\phi_t \geq -C|\Phi_t| \geq 
	- C ( \log( e+ 16\rho_T^2 \rho^{-2}))^{-1}  (d(x,z)^2+t)^{-1}
\]
for some constant $C>0$ independent of $T$. 
Straightforward computations yield
\[
\begin{aligned}
	\nabla \Phi &= 
	-4\rho_T^2 ( \log( e+ 16 \rho_T^2 \rho^{-2}))^{-1} 
	(e(d^2+t)+\rho_T^2)^{-1} (d^2+t)^{-1} d\nabla d, \\
	\Delta \Phi &= 
	4\rho_T^2 ( \log( e+ 16 \rho_T^2 \rho^{-2}))^{-1} 
	(e(d^2+t)+\rho_T^2)^{-1} (d^2+t)^{-1} \\
	&\quad \times \left( 
	2(d^2+t)^{-1} d^2 |\nabla d|^2 
	+2e (e(d^2+t)+\rho_T^2)^{-1} d^2|\nabla d|^2 
	-|\nabla d|^2 - d\Delta d 
	\right). 
\end{aligned}
\]
Then, 
\[
\begin{aligned}
	\Delta \phi &=
	(\eta''\circ \Phi) |\nabla \Phi|^2
	+ (\eta'\circ \Phi) \Delta \Phi   \\
	&\geq
	-C|\nabla \Phi|^2 -C(\Delta \Phi)^{-} \\
	&\geq
	-C \rho_T^4 ( \log( e+ 16\rho_T^2 \rho^{-2}))^{-2} 
	(e(d^2+t)+\rho_T^2)^{-2} (d^2+t)^{-2} d^2 |\nabla d|^2 \\
	&\quad - C\rho_T^2 ( \log( e+ 16\rho_T^2 \rho^{-2}))^{-1} 
	(e(d^2+t)+\rho_T^2)^{-1} 
	(d^2+t)^{-1}  ( |\nabla d|^2 + d (\Delta d)^+ ). 
\end{aligned}
\]
From $|\nabla d|=1$ and \eqref{eq:deld}, it follows that 
\[
\begin{aligned}
	\Delta \phi \geq 
	-C ( \log( e + 16 \rho_T^2 \rho^{-2}))^{-1} (d(x,z)^2+t)^{-1}, 
\end{aligned}
\]
where $C>0$ is a constant depending only on $N$ and $p_{F}$. 

By the above computations, Lemma \ref{lem:young} and $p=p_F$, we obtain 
\begin{equation}\label{eq:Lemcri}
\begin{aligned}
	\int_{M} \phi(\,\cdot\,,0)^{N+2} d\mu
	\leq
	C ( \log( e+\rho_T^2 \rho^{-2}))^{-\frac{N}{2}-1}
	\iint_{D'} (d(x,z)^2+t)^{-\frac{N}{2}-1} dV_{g}(x) dt. 
\end{aligned}
\end{equation}
Since $\phi(x,0) = 1$ for $x\in B(z,\rho/4)$, 
the left-hand side of \eqref{eq:Lemcri} can be estimated as 
\[
	\int_{M} \phi(\,\cdot\,,0)^{N+2} d\mu \geq \mu(B(z,\rho/4)).
\]
We estimate the right-hand side of \eqref{eq:Lemcri}. 
In Riemannian normal coordinates $(x_1,\ldots,x_N)$ centered at $z$, 
we have 
\[
\begin{aligned}
	& \iint_{D'} (d(x,z)^2+t)^{-\frac{N}{2}-1} dV_{g}(x) dt
	= 
	\iint_{\tilde D'} (|x|^2+t)^{-\frac{N}{2}-1} \sqrt{\det(g_{ij})}dx dt,  \\
	& \tilde D' := \left\{ (x,t)\in M\times[0,\infty); 
	\theta_1^2 < |x|^2 +t < \theta_2^2 \right\}, \\
	&\theta_1:=\rho/4, \qquad
	\theta_2:= \rho_T ( (e+ 16 \rho_T^2\rho^{-2})^\frac{1}{2} -e  )^{-\frac{1}{2}}. 
\end{aligned}
\]
Then by the volume comparison theorem \eqref{prep2} 
and $\theta_2<\rho_T$, we obtain
\[
\begin{aligned}
	\iint_{\tilde D'} (|x|^2+t)^{-\frac{N}{2}-1} \sqrt{\det(g_{ij})}dx dt 
	&\leq 
	C\iint_{\tilde D'} (|x|^2+t)^{-\frac{N}{2}-1} dx dt \\
	&=
	2C\iint_{\{\theta_1^2 < |x|^2 +t < \theta_2^2\}} 
	(|x|^2+s^2)^{-\frac{N}{2}-1} s dx ds \\
	&\leq 
	2C\iint_{\{\theta_1^2 < |x|^2 +s^2 < \theta_2^2\}} 
	(\sqrt{|x|^2+s^2})^{-N-1} dx ds \\
	&\leq 
	 C \log(\theta_2/\theta_1) 
	 \leq 
	 C \log(e+ \rho_T \rho^{-1}), 
\end{aligned}
\]
where $C>0$ is a constant independent of $\rho_T$. 
Hence we deduce that 
\[
	\mu(B(z,\rho/4)) \leq C ( \log( e+\rho_T\rho^{-1}))^{-\frac{N}{2}}
\]
for any $0<\rho<\rho_T$, 
where $C>0$ is a constant depending only on $N$ and $p_{F}$. 
Fix $z'\in M$. 
Recall that $\rho_T\leq R'$, where $R'$ is given by \eqref{sp_rad}. 
Then, applying Theorem \ref{half_ball} twice, we obtain 
\[
	\mu(B(z',\rho)) \leq  k_{1}^2C ( \log( e+\rho_T\rho^{-1}))^{-\frac{N}{2}}
\]
for any $0<\rho<\rho_T$. 
This is equivalent to (ii), and the proof is complete. 
\end{proof}

\section{Sharpness of necessary conditions}\label{sec:sharp}
Let $K=K(x,y,t)$ be the heat kernel on $(M,g)$. 
To construct solutions of \eqref{eq:main}, we construct integral solutions 
in the following sense:

\begin{definition}
Let $0<T\leq \infty$. 
\begin{itemize}
\item[(i)]
A function $u$ is called an integral solution of \eqref{eq:main} in $M\times[0,T)$
if $u$ is a measurable function on $M \times (0,T)$ with 
$0\leq u<\infty$ a.e. $M\times (0,T)$ 
satisfying 
\begin{align}
	&u(x,t) = \Psi[u](x,t) \quad 
	\mbox{ for a.e. }(x,t)\in M \times (0,T), \label{eq:integ} \\
	&\Psi[u](x,t) :=
	\int_{M} K(x,y,t) d\mu(y) 
	+ \int_0^t \int_{M} K(x,y,t-s) u(y,s)^p dV_{g}(y) ds. \notag
\end{align}
\item[(ii)]
A function $\overline{u}$ is called a supersolution of \eqref{eq:integ} 
in $M\times[0,T)$ 
if $\overline{u}$ is a measurable function on $M \times (0,T)$ satisfying 
$0\leq \overline{u}<\infty$ 
and $\overline{u} \geq \Psi[\overline{u}]$ 
a.e. in $M\times (0,T)$. 
\end{itemize}
\end{definition}

By applying a similar argument of \cite{HI18} together with 
the Li-Yau Harnack inequalities \cite{LY86} and the volume comparison,  
we give a sharp pointwise estimate of a solution $\int_{M} K(x,y,t) d\mu(y)$ of the linear heat equation, 
where $\mu$ is any nonnegative Radon measure on $M$. 
Then we give candidates of supersolutions of the integral equation 
in the same spirit of \cite[Section 4]{RS13} 
and \cite[Section 4]{HI18}. 
By using the estimate of $\int_{M} K(x,y,t) d\mu(y)$, 
we prove that the candidates are in fact appropriate supersolutions. 
Then the following lemma guarantees 
the existence of solutions:

\begin{lemma}\label{lem:ite}
Assume that there exists 
a supersolution $\overline{u}$ of \eqref{eq:integ} in $M\times[0,T)$. 
Then there exists an integral solution of \eqref{eq:main} in $M\times[0,T)$. 
\end{lemma}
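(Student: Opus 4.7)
The plan is to use a monotone Picard iteration bounded above by the given supersolution. First I would set $u_0 \equiv 0$ and define inductively $u_{n+1} := \Psi[u_n]$ for $n \geq 0$, so that each $u_n$ is a nonnegative measurable function on $M \times (0,T)$. The first step is then to record the two monotonicity properties of $\Psi$ that drive the argument: since $K \geq 0$ on $M \times M \times (0,\infty)$ and the map $s \mapsto s^p$ is nondecreasing on $[0,\infty)$, whenever $0 \leq v \leq w$ a.e.\ in $M\times(0,T)$ we have $\Psi[v] \leq \Psi[w]$ a.e., and in particular $\Psi[v]$ is always nonnegative and measurable.

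Next I would verify by induction that $0 \leq u_n \leq u_{n+1} \leq \overline{u}$ a.e.\ for every $n$. For the lower comparison, $u_1 = \Psi[0] \geq 0 = u_0$, and if $u_n \leq u_{n+1}$ then monotonicity of $\Psi$ gives $u_{n+1} = \Psi[u_n] \leq \Psi[u_{n+1}] = u_{n+2}$. For the upper comparison, $u_0 = 0 \leq \overline{u}$, and if $u_n \leq \overline{u}$ then the supersolution property combined with monotonicity yields
\[
u_{n+1} = \Psi[u_n] \leq \Psi[\overline{u}] \leq \overline{u} \quad \text{a.e.\ in } M \times (0,T).
\]
Consequently $\{u_n\}$ is a pointwise nondecreasing sequence of nonnegative measurable functions dominated a.e.\ by the a.e.\ finite function $\overline{u}$, so
\[
u(x,t) := \lim_{n\to\infty} u_n(x,t)
\]
exists a.e., is measurable, and satisfies $0 \leq u \leq \overline{u} < \infty$ a.e.

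Finally I would pass to the limit in $u_{n+1} = \Psi[u_n]$. The first term of $\Psi[u_n]$ does not depend on $n$, so no limiting argument is needed there. For the nonlinear term, since $0 \leq u_n \nearrow u$ a.e., the integrand $K(x,y,t-s) u_n(y,s)^p$ is monotone nondecreasing in $n$ at almost every $(y,s)$, and two applications of the monotone convergence theorem (first in $y$, then in $s$) give
\[
\int_0^t \!\!\int_M K(x,y,t-s) u_n(y,s)^p\, dV_g(y)\, ds \;\longrightarrow\; \int_0^t \!\!\int_M K(x,y,t-s) u(y,s)^p\, dV_g(y)\, ds
\]
for a.e.\ $(x,t)$. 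Hence $u = \Psi[u]$ a.e., so $u$ is the desired integral solution. The main technical point to be careful about is ensuring the a.e.\ finiteness needed to make monotone convergence meaningful; this is precisely supplied by the domination $u_n \leq \overline{u}$ together with the hypothesis that $\overline{u} \geq \Psi[\overline{u}]$ is finite a.e., which forces the right-hand side of the iteration to be finite a.e.\ at every step and in the limit.
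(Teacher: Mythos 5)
Your proposal is correct and matches the paper's approach: the paper simply cites the monotone iteration scheme of Robinson and Sier\.z\polhk ega \cite{RS13} (see also \cite[Lemma 2.2]{HI18}), which is exactly the iteration $u_0\equiv 0$, $u_{n+1}=\Psi[u_n]$ dominated by the supersolution that you spell out. You correctly identify the only subtle point, namely that a.e.\ finiteness needed for monotone convergence is supplied by the domination $u_n\leq\overline{u}<\infty$.
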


\begin{proof}
Since $M$ is connected, $K$ is positive. 
Then this lemma follows from the same monotone iteration scheme as in 
\cite[Theorem 1]{RS13}. See also \cite[Lemma 2.2]{HI18}. 
\end{proof}

In what follows, we first give estimates 
on the linear part in Subsection \ref{subsec:elin}. 
Next, we prove Theorems \ref{th:necsha}, \ref{th:necshacri} and \ref{th:suf}
in Subsections \ref{subsec:subcri}, \ref{subsec:cri} and \ref{subsec:suff}, 
respectively.

\subsection{Estimates of the linear part}\label{subsec:elin}
We estimate solutions of the linear heat equation 
with measures as initial data. 
The following lemma is a generalization of \cite[Lemma 2.1]{HI18}: 

\begin{lemma}\label{lem:linheat}
Let $\mu$ be a nonnegative Radon measure on $M$. 
Then there exists a constant $C>0$ depending only on $N$ 
such that 
\begin{equation}\label{eq:lines}
	\int_{M} K(x,y,t) d\mu(y)  
	\leq
	C t^{-\frac{N}{2}} 
	\sup_{z\in M} \mu( B(z,t^\frac{1}{2}) ) 
\end{equation}
for $(x,t)\in M\times(0,16 \rho_\infty^2)$. 
\end{lemma}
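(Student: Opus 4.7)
The plan is to adapt the Euclidean argument of Hisa and Ishige (\cite[Lemma 2.1]{HI18}) to the Riemannian setting, replacing the explicit Gaussian kernel by an upper Gaussian bound on $M$, and replacing elementary Euclidean volume counting by Bishop-Gromov and Rauch type comparisons. The two main inputs are the Li-Yau Harnack inequality (available since $|\sec(M)|\leq\kappa$ forces $\Ric\geq -(N-1)\kappa g$) and two-sided volume comparison on balls of radius at most $\rho_\infty$.

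First I would derive a Gaussian upper bound of the form
\[
	K(x,y,t)\leq C t^{-N/2} \exp\!\left(-\frac{d(x,y)^2}{Ct}\right), \qquad 0<t<16\rho_\infty^2,
\]
with $C$ depending only on $N$. Li-Yau gives a bound of the shape $C V(y,\sqrt{t})^{-1}\exp(-d(x,y)^2/(Ct))\cdot e^{C\kappa t}$, and the restriction $t<16\rho_\infty^2\leq \pi^2/\kappa$ makes the exponential curvature correction harmless. Replacing $V(y,\sqrt{t})^{-1}$ by $t^{-N/2}$ uses volume comparison from below, valid because $\sqrt{t}<4\rho_\infty\leq \inj(M)$ and $\sqrt{\kappa t}\leq \pi$ together force $V(y,\sqrt{t})\geq c_N t^{N/2}$ with a dimensional constant $c_N$.

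Next I would decompose the integral by concentric annuli around $x$:
\[
	\int_M K(x,y,t) d\mu(y) = \int_{B(x,\sqrt{t})} K d\mu + \sum_{k=0}^\infty \int_{B(x,2^{k+1}\sqrt{t})\setminus B(x,2^k\sqrt{t})} K d\mu.
\]
On the $k$-th annulus, $K\leq C t^{-N/2} e^{-4^k/C}$. To bound $\mu$ on $B(x,2^{k+1}\sqrt{t})$, cover this ball by $N_k$ balls of radius $\sqrt{t}$ via the covering theorem of Appendix B (or a direct greedy packing argument). Bishop-Gromov from above gives $V(x,2^{k+1}\sqrt{t})\leq C_N 2^{kN} t^{N/2}e^{C_N 2^k}$, and combined with the lower bound $V(y,\sqrt{t}/2)\geq c_N t^{N/2}$ for the packed balls this yields $N_k \leq C_N 2^{kN} e^{C_N 2^k}$. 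Hence $\mu(B(x,2^{k+1}\sqrt{t})) \leq N_k \sup_{z\in M} \mu(B(z,\sqrt{t}))$, and summing leads to
\[
	\int_M K d\mu \leq C t^{-N/2} \sup_{z\in M} \mu(B(z,\sqrt{t})) \sum_{k=0}^\infty 2^{kN} \exp(-4^k/C + C 2^k),
\]
which converges because $4^k$ eventually dominates $2^k$.

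The principal obstacle is ensuring all constants depend only on $N$ when $t$ ranges over $(0,16\rho_\infty^2)$. This is precisely what the definition of $\rho_\infty$ is designed for: the dimensionless product $\sqrt{\kappa t}$ is bounded by $\pi$ throughout this range, so both the Li-Yau exponential correction and the Bishop-Gromov volume-growth factor contribute only dimensional constants. A secondary technical wrinkle is the bookkeeping for the covering count, which is handled cleanly by invoking the covering theorem of Appendix B rather than iterating an ad hoc packing argument.
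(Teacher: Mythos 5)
Your argument is correct, but it is a genuinely different route from the paper's. The paper uses the Besicovitch-type covering from Appendix~B (Corollary~\ref{besi}) to split $M$ into $k_0$ \emph{disjoint} families of balls of radius $(t/4)^{1/2}$; on each such ball it applies the Li--Yau Harnack inequality \eqref{prep8} in the form $\sup_y K(x,y,t)\leq C\inf_z K(x,z,2t)$, replaces the infimum by the volume-normalized average using \eqref{prep5}, and then sums over the disjoint balls using $\int_M K(x,z,2t)\,dV_g(z)\leq 1$. This trick avoids ever needing a pointwise Gaussian upper bound for $K$ and makes the series summation unnecessary: the sum over disjoint balls telescopes into a single integral of the kernel, which is $\leq 1$. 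Your route instead derives a Gaussian upper bound $K\leq Ct^{-N/2}e^{-d^2/(Ct)}$ with dimensional $C$, decomposes into dyadic annuli, bounds the covering number $N_k$ of $B(x,2^{k+1}\sqrt{t})$ by balls of radius $\sqrt{t}$ via Bishop--Gromov, and sums the resulting geometric series. Both give a constant depending only on $N$, and the key point you correctly identify — $\sqrt{\kappa t}\lesssim 1$ on the stated time range — is what keeps all the curvature corrections dimensional. The paper's version is a bit leaner because it sidesteps the Gaussian bound and the explicit summation; yours is closer to the classical Euclidean argument and is more self-contained in the sense that it does not require the Harnack inequality.

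Two small points to tighten if you pursue your route. First, the reference to the covering theorem of Appendix~B (Theorem~\ref{half_ball}) is not quite right for covering $B(x,2^{k+1}\sqrt{t})$: that theorem requires the large radius to be below $R'$ of \eqref{sp_rad}, which fails for large $k$; the Bishop--Gromov packing argument you also mention is the correct tool, and you should use it rather than Theorem~\ref{half_ball}. Second, the volume lower bound $\Vol(B(y,\sqrt{t}))\gtrsim_N t^{N/2}$ is stated in \eqref{prep5} only for radii below $\min\{\inj(M),\pi/(4\sqrt{\kappa})\}$, which can be smaller than $4\rho_\infty$; for $\sqrt{t}$ between these thresholds, use monotonicity, $\Vol(B(y,\sqrt{t}))\geq\Vol(B(y,\rho_\infty))\gtrsim_N\rho_\infty^N\gtrsim_N t^{N/2}$, since $\sqrt{t}<4\rho_\infty$.
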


\begin{remark}
If $M=\R^N$, then \eqref{eq:lines} holds for 
$(x,t)\in M\times (0,\infty)$. 
\end{remark}

\begin{proof}
Assume $\rho_\infty<\infty$. 
The case $\rho_\infty=\infty$ will be handled 
at the end of this proof. 
Let $0<t< 16 \rho_\infty^2$. 
Define $\mathcal{F}:= \bigcup_{x\in M} \overline{B(x,(t/4)^{1/2})}$. 
Since $(t/4)^{1/2}<2\rho_\infty<\inj(M)/2$, 
Corollary \ref{besi} with $\xi:=2 \rho_\infty$ implies the following:  
There exist points $x_{k,i}\in M$ for $k=1,\ldots,k_0$ and $i\geq1$ 
such that 
$\overline{B(x_{k,i}, ( t/4)^{1/2})} 
\cap \overline{B(x_{k,j}, ( t/4)^{1/2})}=\emptyset$ for $i\neq j$ and 
$M = \bigcup_{k=1}^{k_0} \bigcup_{i=1}^\infty 
\overline{B(x_{k,i},( t/4)^{1/2})}$. 
Here, the constant $k_{0}$ depends only on $N$.

Fix $x\in M$. Then, 
\[
\begin{aligned}
	\int_{M} K(x,y,t) d\mu(y) 
	&\leq 
	\sum_{k=1}^{k_0} \sum_{i=1}^\infty 
	\int_{\overline{B(x_{k,i},( t/4)^{1/2}) }} K(x,y,t) d\mu(y) \\
	&\leq 
	\sum_{k=1}^{k_0} \sum_{i=1}^\infty 
	\mu( \overline{B(x_{k,i},( t/4)^{1/2})} ) 
	\sup_{ y\in \overline{B(x_{k,i},( t/4)^{1/2} )}} K(x,y,t) \\
	&\leq 
	\sup_{z\in M} \mu( B(z, t^{1/2} ))
	\sum_{k=1}^{k_0} \sum_{i=1}^\infty 
	\sup_{ y\in \overline{B(x_{k,i},( t/4)^{1/2} )}} K(x,y,t). 
\end{aligned}
\]

To estimate $\sup_{y} K(x,y,t)$, 
we take $y,z\in \overline{B(x_{k,i}, ( t/4)^{1/2})}$. 
Then, by $|\sec(M)|\leq \kappa$, 
$d(y,z)\leq \sqrt{t}$ and $t<\pi^2/\kappa$, 
the estimate of the heat kernel \eqref{prep8} implies 
the existence of a constant $C>0$ depending only on $N$ such that 
\[
K(x,y,t) \leq C K(x,z,2t). 
\]
Then there exists a constant $C>0$ depending only on $N$ such that 
\[
\begin{aligned}
	\sup_{ y\in \overline{B(x_{k,i}, ( t/4)^{1/2} )}} K(x,y,t)
	&\leq 
	C
	\inf_{ z\in \overline{B(x_{k,i}, ( t/4)^{1/2} )}} K(x,z,2t)  \\
	&\leq 
	\frac{ C }
	{ \Vol( \overline{ B(x_{k,i}, ( t/4)^{1/2} )} ) } 
	\int_{ \overline{B(x_{k,i}, ( t/4)^{1/2} )} } K(x,z,2t) dV_g(z). 
\end{aligned}
\]
Since $(t/4)^{1/2}<2\rho_\infty$, 
the volume comparison theorem \eqref{prep5} shows 
the existence of a constant $C>0$ depending only on $N$ such that 
\[
\Vol( \overline{ B(x_{k,i}, ( t/4)^{1/2} )} )\geq C t^\frac{N}{2}. 
\]

From the above computations, it follows that 
\[
\begin{aligned}
	\int_{M} K(x,y,t) d\mu(y) 
	&\leq 
	C
	 t^{-\frac{N}{2}}  \sup_{z\in M} 
	\mu( B(z, t^\frac{1}{2}) ) 
	\sum_{k=1}^{k_0} \sum_{i=1}^\infty 
	\int_{ \overline{B(x_{k,i}, ( t/4)^{1/2} )} } K(x,z,2t) dV_g(z) \\
	&\leq 
	C
	 t^{-\frac{N}{2}}  \sup_{z\in M} 
	\mu( B(z, t^\frac{1}{2}) ) 
	\sum_{k=1}^{k_0} \int_M K(x,z,2t) dV_g(z) \\
	&\leq 
	k_{0}C t^{-\frac{N}{2}}  
	\sup_{z\in M} \mu( B(z, t^\frac{1}{2}) )
\end{aligned}
\]
for $(x,t)\in M\times(0, 16 \rho_\infty^2)$, where $C>0$ depends only on $N$. 
Hence we obtain \eqref{eq:lines} in the case $\rho_\infty<\infty$. 
Since $C$ is independent of $\rho_\infty$, \eqref{eq:lines} also holds 
in the case $\rho_\infty=\infty$.  
\end{proof}

\subsection{The case $p<p_F$}\label{subsec:subcri}
We prove Theorem \ref{th:necsha}. 

\begin{proof}[Proof of Theorem \ref{th:necsha}]
Let $\mu$ and $\tilde \rho\in(0,\rho_T]$ satisfy \eqref{eq:sha1}, that is, 
\[\sup_{z\in M} \mu( B(z,\tilde \rho) )\leq c \tilde \rho^{N-\frac{2}{p-1}}, \]
where $c>0$ is a constant chosen later. Set 
\begin{equation}\label{eq:linsuper}
	\overline{u}(x,t) := 2U(x,t), \qquad 
	U(x,t):= \int_{M} K(x,y,t) d\mu(y). 
\end{equation}
We check that $\overline{u}$ is a supersolution of \eqref{eq:integ} 
in $M\times[0,\tilde \rho^2)$. 
From Fubini's theorem and the semigroup property of $K$, it follows that 
\begin{equation}\label{eq:UFubi}
\begin{aligned}
	\Psi[\overline{u}](x,t) &=  
	U(x,t) + 2^p \int_0^t \int_{M} K(x,y,t-s) U(y,s)^{p-1}U(y,s) dV_{g}(y) ds \\
	&\leq 
	U(x,t) + 2^p \int_0^t \|U(\,\cdot\,,s)\|_{L^\infty}^{p-1} 
	\int_{M} K(x,y,t-s) \int_{M} K(y,z,s) d\mu(z) dV_{g}(y) ds \\
	&=
	U(x,t) + 2^p U(x,t) \int_0^t \|U(\,\cdot\,,s)\|_{L^\infty}^{p-1} ds 
\end{aligned}
\end{equation}
for $(x,t)\in M\times (0,\tilde \rho^2)$. 
By Lemma \ref{lem:linheat} with $\tilde \rho$ 
($\leq 4 \rho_\infty$), we have 
\[
	U(x,t)
	\leq
	C t^{-\frac{N}{2}} 
	\sup_{z\in M} \mu( B(z,t^\frac{1}{2}) ) 
	\leq 
	C t^{-\frac{N}{2}} 
	\sup_{z\in M} \mu( B(z,\tilde \rho) ) 
\]
for $(x,t)\in M\times(0,\tilde \rho^2)$. 
This together with \eqref{eq:sha1} and $1-N(p-1)/2>0$ gives 
\[
	\int_0^t \|U(\,\cdot\,,s)\|_{L^\infty}^{p-1} ds
	\leq 
	C c^{p-1} t^{1-\frac{N(p-1)}{2}} \tilde \rho^{N(p-1)-2}
	\leq 
	C c^{p-1}
\]
for $(x,t)\in M\times(0,\tilde \rho^2)$, 
where $C>0$ is a constant depending only on $N$ and $p$. 
Therefore, by choosing $c$ satisfying $2^p C c^{p-1} \leq 1$, we see that $\overline{u}$ is a supersolution of \eqref{eq:integ} 
in $M\times[0,\tilde \rho^2)$.

By Lemma \ref{lem:ite}, we obtain an integral solution $u$ 
of \eqref{eq:main} in $M\times[0,\tilde \rho^2)$ satisfying 
$0\leq u\leq \overline{u}$. 
In particular, $u\in L^\infty_\loc((0,\tilde \rho^2); L^\infty(M) )$. 
By $u=\Psi[u]$, Fubini's theorem and the semigroup property of $K$, 
we can see that $u$ satisfies 
\begin{equation}\label{eq:uint}
	u(x,t) = \int_{M} K(x,y,t-\tau) u(y,\tau) dV_g(y) 
	+ \int_\tau^t \int_{M} K(x,y,t-s) u(y,s)^p dV_{g}(y) ds
\end{equation}
for a.e. $x\in M$ and $0<\tau<t<\tilde \rho^2$. 
Hence we easily see that $u\in C^{2,1}(M\times(0,\tilde \rho^2))$.

It remains to prove that $u$ satisfies \eqref{eq:ini}. 
To see this, it suffices to check \eqref{eq:ini} 
for any nonnegative function $\psi\in C_0(M)$. 
We modify the argument of \cite[Lemma 2.4]{HI18}. 
Since $u$ satisfies \eqref{eq:integ}, we have 
\begin{equation}\label{eq:upsiV}
\begin{aligned}
	\int_M u(\,\cdot\,,t) \psi dV_g 
	&= 
	\int_M \int_M K(x,y,t) \psi(x) dV_g(x) d\mu(y) \\
	&\quad + 
	\int_M \int_0^t  \int_M 
	K(x,y,t-s) u(y,s)^p dV_g(y) ds \psi(x) dV_g(x) \\
\end{aligned}
\end{equation}
for $x\in M$ and $0<t<\tilde \rho^2$. 
We claim that the second term in the right-hand side of \eqref{eq:upsiV} 
converges to $0$ as $t\to0$. 
Write $M_0:= \supp\psi$. 
From \eqref{eq:uint} and Fubini's theorem, it follows that 
\[
\begin{aligned}
	&\int_M u(\,\cdot\,,t) \psi dV_g \\
	&= 
	\int_M \int_M K(y,x,t-\tau) \psi(x) dV_g(x) u(y,\tau) dV_g(y) \\
	&\quad + 
	\int_M \int_\tau^t  \int_M 
	K(x,y,t-s) u(y,s)^p dV_g(y) ds \psi(x) dV_g(x) \\
	&\geq 
	\int_{M_0} u(\,\cdot\,,\tau) \psi dV_g 
	- 
	\int_{M_0} u(\,\cdot\,,\tau) dV_g 
	\left\| \int_M K(\,\cdot\,,x,t-\tau) \psi(x) dV_g(x) -\psi\right\|_{L^\infty(M_0)} \\
	&\quad + 
	\int_M \int_\tau^t  \int_M 
	K(x,y,t-s) u(y,s)^p dV_g(y) ds \psi(x) dV_g(x)
\end{aligned}
\]
for $x\in M$ and $0<\tau<t<\tilde \rho^2/3$. 
Since $u$ satisfies \eqref{eq:fujita} in $M\times (0,\tilde \rho^2)$, 
by Lemma \ref{lem:exini}, 
there exists a nonnegative Radon measure $\nu$ on $M$ 
such that $u$ satisfies \eqref{eq:ini} with $\mu$ replaced by $\nu$. 
Thus, using \eqref{eq:M0usup} and letting $\tau\to0$ yield 
\[
\begin{aligned}
	\int_M \psi d\nu &\geq 
	\int_{M_0} \psi d\nu 
	- 
	\sup_{0<\tau<\tilde \rho^2/3}\int_{M_0} u(\,\cdot\,,\tau) dV_g
	\left\| \int_M K(\,\cdot\,,x,t) \psi(x) dV_g(x) -\psi\right\|_{L^\infty(M_0)}  \\
	&\quad + 
	\int_M \int_0^t  \int_M 
	K(x,y,t-s) u(y,s)^p dV_g(y) ds \psi(x) dV_g(x)
\end{aligned}
\]
for $x\in M$ and $0<t<\tilde \rho^2/3$. 
Since $M_0= \supp\psi$, we see that 
\[
\begin{aligned}
	&\int_M \int_0^t  \int_M 
	K(x,y,t-s) u(y,s)^p dV_g(y) ds \psi(x) dV_g(x) \\
	&\leq 
	\sup_{0<\tau<\tilde \rho^2/3}\int_{M_0} u(\,\cdot\,,\tau) dV_g
	\left\| \int_M K(\,\cdot\,,x,t) \psi(x) dV_g(x) -\psi\right\|_{L^\infty(M_0)}. 
\end{aligned}
\]
By \cite[Theorem 7.13]{Grbook}, the right-hand side converges to $0$ 
as $t\to0$. 
Hence by letting $t\to0$, we can see that 
the second term in the right-hand side of \eqref{eq:upsiV} 
converges to $0$ as $t\to0$. Then the claim follows.

Finally, let us consider 
the first term in the right-hand side of \eqref{eq:upsiV}. 
Fix $z_0\in M$ and $R\geq \sqrt{3}$ so that $M_{0}=\supp \psi\subset B(z_0,R)$. 
Since $\mu$ is a Radon measure, 
we have $\mu(B(z_0,R))<\infty$. Then,  
\[
	\int_{M}\psi(y)d\mu(y)\leq\|\psi\|_{L^\infty(M_0)}\mu(B(z_0,R))<\infty. 
\]
Thus, $\psi\in L^1_\mu(M)$, 
that is, the nonnegative function $\psi$ is $\mu$-integrable. 
We check the $\mu$-integrability of 
$y\mapsto \int_{M}K(x,y,t)\psi(x)dV_{g}(x)$. 
In the case $y\in B(z_0,2R)$, we see that 
\[
	\int_{M}K(x,y,t)\psi(x)dV_{g}(x) \leq \|\psi\|_{L^\infty(M_0)}<\infty. 
\]
On the other hand,  
in the case $y\notin B(z_0,2R)$, 
the upper bound of the heat kernel \eqref{prep9} yields 
\[
\begin{aligned}
\int_{M}K(x,y,t)\psi(x)dV_{g}(x)&= \int_{B(z_0,R)}K(x,y,t)\psi(x)dV_{g}(x)\\
&\leq \|\psi\|_{L^\infty(M_0)} 
\int_{B(z_0,R)} Ct^{-N/2}\exp\left(-\frac{d(x,y)^2}{(4+(1/2))t}\right)dV_{g}(x), 
\end{aligned}
\]
where $C>0$ depends only on $N$, $\kappa$ and $\inj(M)$. 
Since $x\in B(z_0,R)$ and $y\notin B(z_0,2R)$ with $R\geq \sqrt{3}$, 
we have $d(x,y)^2\geq (3/4)\{d(x,y)^2+1\}$. 
Thus, 
\[
\begin{aligned}
	t^{-\frac{N}{2}}\exp\left(-\frac{d(x,y)^2}{(4+(1/2))t}\right)
	&\leq t^{-\frac{N}{2}}\exp\left(\frac{-1}{6t}\right) 
	\exp\left(-\frac{d(x,y)^2}{6t}\right)
	\leq C\exp\left(-\frac{d(x,y)^2}{6t}\right) \\
	&\leq C\exp\left(-\frac{d(x,y)^2}{2\tilde \rho^2}\right)
	=C (\tilde \rho^2)^{\frac{N}{2}} (\tilde \rho^2)^{-\frac{N}{2}}
	\exp\left(-\frac{d(x,y)^2}{2 \tilde \rho^2}\right)
\end{aligned}
\]
for $0<t<\tilde \rho^2/3$. 
Combining the above inequalities, we have 
\[
	\int_{M}K(x,y,t)\psi(x)dV_{g}(x)
	\leq C\|\psi\|_{L^\infty(M_0)}  
	\int_{B(z_0,R)}(\tilde \rho^2)^{-\frac{N}{2}}
	\exp\left(-\frac{d(x,y)^2}{2\tilde \rho^2}\right)dV_{g}(x)
\]
for some $C>0$ depending only on $N$, $\kappa$, $\inj(M)$ 
and $\tilde \rho$. 
Then, by the lower bound of the heat kernel \eqref{prep10}, we have 
\begin{equation}\label{intKsupK}
	\int_{M}K(x,y,t)\psi(x)dV_{g}(x)
	\leq C\|\psi\|_{L^\infty(M_0)}  \int_{B(z_0,R)}K(x,y,\tilde \rho^2)dV_{g}(x). 
\end{equation}
Thus, dividing the integral with respect to $\mu$ into $B(z_0,2R)$ and $B(z_0,2R)^{c}$, we see that 
\[
\begin{aligned}
\int_{M}\int_{M}K(x,y,t)\psi(x)dV_{g}(x)d\mu(y)
&=\int_{y\in B(z_0,2R)}\int_{x\in B(z_0,R)}K(x,y,t)\psi(x)dV_{g}(x)d\mu(y)\\
&+\int_{y\notin B(z_0,2R)}\int_{x\in B(z_0,R)}K(x,y,t)\psi(x)dV_{g}(x)d\mu(y). 
\end{aligned}
\]
The first term is easily estimated from above as 
\[
	\int_{y\in B(z_0,2R)}\int_{x\in B(z_0,R)}K(x,y,t)\psi(x)dV_{g}(x)d\mu(y)
	\leq \|\psi\|_{L^\infty(M_0)}\mu(B(z_0,2R))<\infty. 
\]
On the second term, we have 
\[
\begin{aligned}
	&\int_{y\notin B(z_0,2R)}\int_{x\in B(z_0,R)}K(x,y,t)\psi(x)dV_{g}(x)d\mu(y)\\
	&\leq C\|\psi\|_{L^\infty(M_0)} 
	\int_{x\in B(z_0,R)}\int_{y\in M}K(x,y,\tilde \rho^2)d\mu(y)dV_{g}(x)\\
	&\leq   C \|\psi\|_{L^\infty(M_0)} 
	\int_{x\in B(z_0,R)} (\tilde \rho^2)^{-\frac{N}{2}} 
	\sup_{z\in M} \mu( B(z,\tilde \rho) )dV_{g}(x), 
\end{aligned}
\]
where the first inequality follows from \eqref{intKsupK} 
and the second one follows from Lemma \ref{lem:linheat}. 
Thus, combining the assumption \eqref{eq:sha1} for $\mu$, we have 
\[
	\int_{x\in B(z_0,R)} (\tilde \rho^2)^{-\frac{N}{2}} 
	\sup_{z\in M} \mu( B(z,\tilde \rho) )dV_{g}(x)
	\leq c \tilde \rho^{-\frac{2}{p-1}}\Vol(B(z_0,R))<\infty.
\]
Thus, we see that $\int_{M}K(x,\,\cdot\,,t)\psi(x)dV_{g}(x)\in L^{1}_{\mu}(M)$. 
Since the pointwise convergence of $\int_{M}K(x,y,t)\psi(x)dV_{g}(x)$ to $\psi(y)$ as $t\to 0$ is established 
(see  \cite[Theorem 7.13]{Grbook} for instance), by Lebesgue's dominated convergence theorem, we can say that 
\[
	\int_M \left| \int_M K(y,x,t) \psi(x) dV_g(x) -\psi(y)  \right| d\mu(y)
	\to0 
	\quad \mbox{ as }t\to0. 
\]
This immediately implies that 
\[
\begin{aligned}
	\int_M \int_M K(x,y,t) \psi(x) dV_g(x) d\mu(y)
	&= \int_M \psi d\mu
	+ \int_M \left( \int_M K(y,x,t) \psi(x) dV_g(x) -\psi(y) \right)d\mu(y)  \\
	&\to \int_M \psi d\mu 
	\quad \mbox{ as }t\to0. 
\end{aligned}
\]
Hence by letting $t\to0$ in \eqref{eq:upsiV}, we obtain \eqref{eq:ini}. 
The proof is complete. 
\end{proof}

\subsection{The case $p\geq p_F$}\label{subsec:cri}
In this subsection, 
let $N$, $p$ and $T$ be as in Theorem \ref{th:necshacri}. 
Let $\eta\in C^\infty(\R)$ satisfy 
$0\leq \eta \leq 1$, $\eta'\leq 0$, 
$\eta(z)=0$ for $z\geq 1$,  
$\eta(z)=1$ for $z\leq 1/2$ and 
$\sup\{z\in\mathbb{R}; \eta(z)>0\}=1$. 
Fix $z_0\in M$ and define $f \in L^1_\loc(M)$ by $f(x):=\tilde{f}(r(x))$, where $r(x):=d(z_0,x)$ and 
\[
	\tilde{f}(r):= 
	\left\{ 
	\begin{aligned}
	& r^{-\frac{2}{p-1}}\eta( \rho_T^{-1} r ) 
	&&\mbox{ if }p>p_F, \\
	& (\rho_T r^{-1})^N 
	( \log (e^2 + \rho_T r^{-1} ))^{-\frac{N}{2}-1}
	\eta( \rho_T^{-1} r)
	&&\mbox{ if }p=p_F. 
	\end{aligned}
	\right.
\]
We remark that $\tilde{f}(r)$ is strictly decreasing for 
$r\in(0,\rho_T)$. 
We set a nonnegative Radon measure $\mu_f$ 
by $\mu_f(A):=\int_A f dV_g$. 
We prepare estimates of $\mu_f$.

\begin{lemma}\label{lem:lumu}
There exists a constant $C>1$ depending only on $N$ and $p$ such that 
the following (i) and (ii) hold: 
\begin{itemize}
\item[(i)]
If $p>p_F$, then 
$\displaystyle C^{-1}  \rho^{N-\frac{2}{p-1}} 
\leq \sup_{z\in M} \mu_f( B(z,\rho) ) 
\leq C \rho^{N-\frac{2}{p-1}}$ 
for any $0<\rho<\rho_T$. 
\item[(ii)]
If $p=p_F$, then 
$\displaystyle C^{-1} \rho_T^N ( \log (e+\rho_T\rho^{-1}) )^{-\frac{N}{2}}
\leq \sup_{z\in M} \mu_f ( B(z,\rho) ) 
\leq C \rho_T^N ( \log (e+\rho_T\rho^{-1}) )^{-\frac{N}{2}}$ 
for any $0<\rho< \rho_T$. 
\end{itemize}
\end{lemma}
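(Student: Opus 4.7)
The plan is to compute $\mu_f(B(z,\rho))$ using Riemannian normal coordinates centered at $z_0$. Since $\rho_T\le \inj(M)/4$, the ball $B(z_0,\rho_T)$ sits inside a single normal coordinate chart, in which $r(x)=|x|$; the volume density $\sqrt{\det(g_{ij}(x))}$ is pinched between two positive constants depending only on $N$ by the volume comparison bounds \eqref{prep2} and \eqref{prep5} (recall $\rho_T\le \pi/(4\sqrt{\kappa})$). This reduces every integral of $f$ against $dV_g$ supported in $B(z_0,\rho_T)$ to an essentially Euclidean radial integral, up to universal constants.

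For the lower bound I take $z=z_0$, so that $\sup_{z\in M}\mu_f(B(z,\rho))\ge \mu_f(B(z_0,\rho))$, and the latter is comparable to $\int_0^{\rho}\tilde f(r)r^{N-1}\,dr$ with constants depending only on $N$. When $p>p_F$ this equals a constant times $\int_0^{\rho}r^{N-1-2/(p-1)}\,dr$ (on the interval where $\eta\equiv 1$, i.e.\ $r\le \rho_T/2$), which is of order $\rho^{N-2/(p-1)}$; the integrand is integrable at $0$ because $p>p_F$ forces $2/(p-1)<N$. When $p=p_F$, after the substitution $u=\log(e^2+\rho_T/r)$ the integral reduces, up to a Jacobian bounded between two positive constants, to $\rho_T^N\int_{\log(e^2+\rho_T\rho^{-1})}^{\infty}u^{-N/2-1}\,du$, which is of order $\rho_T^N\bigl(\log(e+\rho_T\rho^{-1})\bigr)^{-N/2}$.

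For the upper bound at an arbitrary $z\in M$, I split
\[
\mu_f(B(z,\rho)) = \mu_f\bigl(B(z,\rho)\cap B(z_0,\rho)\bigr) + \mu_f\bigl(B(z,\rho)\setminus B(z_0,\rho)\bigr).
\]
The first term is bounded by $\mu_f(B(z_0,\rho))$, which is of the desired order by the previous paragraph. For the second term, every $x\in B(z,\rho)\setminus B(z_0,\rho)$ satisfies $r(x)\ge \rho$, so the strict monotonicity of $\tilde f$ on $(0,\rho_T)$ yields $f(x)\le \tilde f(\rho)$; combined with the uniform volume bound $\Vol(B(z,\rho))\le C\rho^N$ (valid for any $z\in M$ and $\rho<\rho_T$ by a Bishop--Gromov-type estimate from Appendix \ref{sec:app-comp}, using the Ricci lower bound implied by $|\sec(M)|\le\kappa$), this contributes at most $C\tilde f(\rho)\rho^N$. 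A direct inspection shows that $\tilde f(\rho)\rho^N$ is of order $\rho^{N-2/(p-1)}$ when $p>p_F$, and of order $\rho_T^N\bigl(\log(e+\rho_T\rho^{-1})\bigr)^{-N/2-1}$ when $p=p_F$; the latter is dominated by a constant times $\rho_T^N\bigl(\log(e+\rho_T\rho^{-1})\bigr)^{-N/2}$ because $\log(e+\rho_T\rho^{-1})\ge 1$.

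The main technical point will be the critical-case log integral, which relies on the substitution above and on the comparability of $\log(e^2+\rho_T\rho^{-1})$ and $\log(e+\rho_T\rho^{-1})$ for $0<\rho<\rho_T$; the rest of the argument is routine. The only essentially geometric input beyond the volume-form estimates near $z_0$ is the global uniform bound $\Vol(B(z,\rho))\le C\rho^N$ at an arbitrary center, which is where the bounded sectional curvature assumption enters.
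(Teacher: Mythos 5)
Your proof is correct and reaches the same bounds by a more elementary route than the paper. For the uniform-in-$z$ upper bound, the paper applies the coarea formula to $\mu_f(B(z,\rho))$, splits the level-set parameter at $\tilde f(\rho)$, and integrates by parts after the substitution $\lambda=\tilde f(r)$ to produce the single clean inequality $\mu_f(B(z,\rho))\le C(N)N\int_0^\rho r^{N-1}\tilde f(r)\,dr$ (their \eqref{ballcheck}), from which both cases follow by elementary radial integrals. You instead split the domain into $B(z,\rho)\cap B(z_0,\rho)$ and its complement, bound the first piece by $\mu_f(B(z_0,\rho))$ (which reduces via normal coordinates and the two-sided volume comparisons \eqref{prep2}, \eqref{prep4} to the same radial integral), and on the complement use the fact that $\tilde f$ is nonincreasing together with $\Vol(B(z,\rho))\le C\rho^N$ from \eqref{prep3}. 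Since $\{f>\tilde f(\rho)\}=B(z_0,\rho)$, your spatial decomposition and the paper's split in the level $\lambda$ are the horizontal and vertical versions of the same idea; the paper's coarea/integration-by-parts packages everything into a single inequality valid for all $z$, while your version avoids that machinery and handles two explicit terms, which is slightly longer to write out but arguably simpler to verify. The lower bounds coincide in substance, with the one cosmetic difference that the paper integrates over $B(z_0,\rho/2)$ so that the cutoff $\eta$ equals $1$ there automatically, whereas you restrict the integration range explicitly to the region where $\eta\equiv1$; both treatments of the cutoff are correct.
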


\begin{proof}
We prepare some estimates which hold for both (i) and (ii). 
Let $0<\rho<\rho_T$. 
We estimate $\sup_{z\in M} \mu_f( B(z,\rho) )$ from above. 
Fix $z\in M$ and $0<\rho<\rho_T$. 
By the coarea formula, we have 
\[
\mu_{f}(B(z,\rho))=\int_{B(z,\rho)}f(x)dV_{g}(x)=\int_{0}^{\infty}\Vol\left(f^{-1}((\lambda,\infty])\cap B(z,\rho)\right)d\lambda. 
\]
It is clear that 
\[
\begin{aligned}
&\int_{0}^{\infty}\Vol\left(f^{-1}((\lambda,\infty])\cap B(z,\rho)\right)d\lambda\\
&=
\int_{0}^{\tilde{f}(\rho)}\Vol\left(f^{-1}((\lambda,\infty])\cap B(z,\rho)\right)d\lambda
+
\int_{\tilde{f}(\rho)}^{\infty}\Vol\left(f^{-1}((\lambda,\infty])\cap B(z,\rho)\right)d\lambda. 
\end{aligned}
\]
The first term is $0$ if $\rho\geq \rho_T$. 
In the case $\rho<\rho_T$, the first term is estimated as 
\[
\begin{aligned}
\int_{0}^{\tilde{f}(\rho)}\Vol\left(f^{-1}((\lambda,\infty])\cap B(z,\rho)\right)d\lambda
\leq \int_{0}^{\tilde{f}(\rho)}\Vol\left(B(z,\rho)\right)d\lambda
\leq C(N)\rho^{N}\tilde{f}(\rho), 
\end{aligned}
\]
where the second inequality follows from the volume comparison theorem 
\eqref{prep3} with $C(N):=2^{N-1}N^{-1}\Area(\bS^{N-1})$. 
To estimate the second term, we note that 
$f^{-1}((\lambda,\infty])=B(z_0,\tilde{f}^{-1}(\lambda))$ 
for $\lambda>0$. 
Then, similarly to the estimate of the first term, we see that 
\[
	\int_{\tilde{f}(\rho)}^{\infty}
	\Vol\left(f^{-1}((\lambda,\infty])\cap B(z,\rho) \right)d\lambda
	\leq C(N)\int_{\tilde{f}(\rho)}^{\infty}(\tilde{f}^{-1}(\lambda))^{N}d\lambda 
	=C(N)\int_{\rho}^{0}r^{N}\tilde{f}'(r)dr, 
\]
where the last equality follows from the change of variable 
with $\lambda:=\tilde{f}(r)$. 
We have 
\[
\begin{aligned}
	C(N)\int_{\rho}^{0}r^{N}\tilde{f}'(r)dr
	&=C(N)\left. \left(r^N\tilde{f}(r)\right)
	\right|_{r=\rho}^{r\to 0}-C(N)N\int_{\rho}^{0}r^{N-1}\tilde{f}(r)dr\\
	&=-C(N)\rho^N\tilde{f}(\rho)+C(N)N\int_{0}^{\rho}r^{N-1}\tilde{f}(r)dr. 
\end{aligned}
\]
Thus, for $p\geq p_F$, we have 
\begin{equation}\label{ballcheck}
	\mu_{f}(B(z,\rho))\leq C(N)N\int_{0}^{\rho}r^{N-1}\tilde{f}(r)dr. 
\end{equation}

(i) In the case $p>p_{F}$, we have
\[
	C(N)N\int_{0}^{\rho}r^{N-1}\tilde{f}(r)dr
	\leq C(N)N\int_{0}^{\rho}r^{N-1-\frac{2}{p-1}}dr
	=\frac{C(N)N}{N-(2/(p-1))}\rho^{N-\frac{2}{p-1}}. 
\]
This together with \eqref{ballcheck} shows 
the desired estimate from above. 
For the estimate from below, 
in Riemannian normal coordinates $(x_1,\ldots,x_N)$ centered at $z_0$, 
we see that 
\[
	\sup_{z\in M} \mu_f( B(z,\rho) ) 
	\geq 
	\int_{B(z_0,\rho/2)} d(z_0,x)^{-\frac{2}{p-1}} dV_g(x)  
	= 
	\int_{B(0,\rho/2)} |x|^{-\frac{2}{p-1}} \sqrt{\det(g_{ij}(x))} dx. 
\]
By the volume comparison theorem \eqref{prep4}, 
we have $\sup_{z\in M} \mu_f( B(z,\rho) ) 
\geq C^{-1} \rho^{N-2/(p-1)}$. 
Hence (i) follows.

(ii) In the case $p=p_{F}$, 
from the volume comparison theorem \eqref{prep4}, it follows that 
\[
\begin{aligned}
	\sup_{z\in M} \mu_f( B(z,\rho) ) 
	&\geq 
	\rho_T^N \int_{B(z_0,\rho/2)} 
	d(z_0,x)^{-N} ( \log (e^2 + \rho_T d(z_0,x)^{-1} )^{-\frac{N}{2}-1} dV_g(x)  \\
	&\geq 
	C^{-1} \rho_T^N \int_{B(0,\rho/2)} 
	\frac{N}{2} \left( \frac{\rho_T}{e^2|x| +\rho_T} \right) |x|^{-N} 
	( \log (e^2 + \rho_T |x|^{-1} )^{-\frac{N}{2}-1} dx \\
	&= 
	C^{-1} \Area(\bS^{N-1})
	 \rho_T^N ( \log (e^2+2\rho_T\rho^{-1}) )^{-\frac{N}{2}} 
	 \geq 
	C^{-1} \rho_T^N  ( \log (e+\rho_T\rho^{-1}) )^{-\frac{N}{2}}, 
\end{aligned}
\]
where $C>1$ depends only on $N$. 
On the other hand, by \eqref{ballcheck}, we have 
\[
\begin{aligned}
	\sup_{z\in M} \mu_f( B(z,\rho) ) 
	&\leq 
	C \int_{0}^{\rho}r^{N-1}\tilde{f}(r) dr 
	\leq C \int_{0}^{\rho}r^{N-1}(\rho_T r^{-1})^{N}
	( \log (e^2 + \rho_T r^{-1} )^{-\frac{N}{2}-1}dr \\
	&=\frac{2C}{N} \rho_T^{N}\int_{0}^{\rho} 
	\left( \frac{e^2 r+\rho_T}{\rho_T}\right)
	\frac{N}{2} \left( \frac{\rho_T r^{-1}}{e^2 r +\rho_T} \right)
	( \log (e^2 + \rho_T r^{-1} )^{-\frac{N}{2}-1} dr  \\
	&\leq 
	C \rho_T^N 
	( \log (e^2 +\rho_T \rho^{-1}) )^{-\frac{N}{2}} 
	\leq 
	C \rho_T^N ( \log (e+\rho_T\rho^{-1}) )^{-\frac{N}{2}}. 
\end{aligned}
\]
Hence (ii) follows. 
\end{proof}

We prepare auxiliary functions for proving Theorem \ref{th:necshacri}. 
For $1<\alpha<p$ and $0<\beta<N/2$, 
we define a convex and strictly increasing function $h$ by 
\begin{equation}\label{eq:hdefff}
	h(z):= 
	\left\{ 
	\begin{aligned}
	&z^\alpha && \mbox{ if }p>p_F, \\
	&z(\log(A+z))^\beta && \mbox{ if }p=p_F. 
	\end{aligned}
	\right.
\end{equation}
In the case $p=p_F$, 
we fix $A>e^{N}$ such that 
$z\mapsto z^p/h(z)= z^{p-1} (\log(A + z))^{-\beta}$ is strictly increasing. 
Then, $z\mapsto z^p/h(z)$ and $z\mapsto h(z)/z$ are strictly increasing 
in each case. 
Note that the inverse function $h^{-1}$ of $h$ exists and is also strictly increasing. 
For $c>0$, define 
\begin{equation}\label{eq:super}
	\overline{u}(x,t):= 
	2 c U(x,t), \qquad 
	U(x,t) := h^{-1} \left( \int_M K(x,y,t) h( f(y) )  dV_g(y) \right). 
\end{equation}
Let us estimate $U$.

\begin{lemma}\label{lem:linex}
There exists a constant $C>0$ depending only on $N$ and $p$ such that 
\[
\begin{aligned}
	U(x,t) &\leq 
	\left\{
	\begin{aligned}
	& C t^{-\frac{1}{p-1}} && \mbox{ if }p>p_F,   \\
	& C \rho_T^N t^{-\frac{N}{2}} 
	(\log(e+\rho_T t^{-\frac{1}{2}}))^{-\frac{N}{2}} 
	&& \mbox{ if }p=p_F,   \\
	\end{aligned}
	\right.
\end{aligned}
\]
for $(x,t)\in M\times (0,\rho_T^2)$. 
\end{lemma}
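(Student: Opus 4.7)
The strategy is to apply Lemma \ref{lem:linheat} to the nonnegative Radon measure $d\nu := h(f)\, dV_g$, which, since $\rho_T^2 \leq 16\rho_\infty^2$, yields
\[
\int_M K(x,y,t)\, h(f(y))\, dV_g(y) \leq C\, t^{-N/2}\, \sup_{z\in M} \nu(B(z,\sqrt{t}))
\]
for $(x,t)\in M\times(0,\rho_T^2)$. Since $h$ is strictly increasing, it then suffices to estimate $\sup_z \nu(B(z,\rho))$ for $\rho<\rho_T$ and apply $h^{-1}$ to the resulting upper bound.

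To bound this supremum, I would mimic the coarea argument used in the proof of Lemma \ref{lem:lumu}: $f$ is radial about $z_0$ and $\tilde f$ is strictly decreasing on $(0,\rho_T)$, so $h\circ f$ is a radial decreasing function supported in $B(z_0,\rho_T)$; together with the volume comparison (as in \eqref{ballcheck}) this gives
\[
\sup_{z\in M} \nu(B(z,\rho)) \leq C\int_0^{\rho} r^{N-1}\, h(\tilde f(r))\, dr
\quad\text{for } 0<\rho<\rho_T.
\]

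In the supercritical case $p>p_F$, I pick $\alpha\in(1,p)$ with $\alpha<N(p-1)/2$, which is possible precisely because $p>p_F$. Then $h(\tilde f(r))\leq r^{-2\alpha/(p-1)}$, and the radial integral evaluates to a constant multiple of $\rho^{N-2\alpha/(p-1)}$. Inserting $\rho=\sqrt{t}$ above yields $\int_M K(x,y,t) h(f)\, dV_g\leq C t^{-\alpha/(p-1)}$, and applying $h^{-1}(z)=z^{1/\alpha}$ gives the desired $U(x,t)\leq Ct^{-1/(p-1)}$.

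In the critical case $p=p_F$, I fix $\beta\in(0,N/2)$. Near the origin, $\tilde f(r)\sim(\rho_T/r)^N(\log(\rho_T/r))^{-N/2-1}$, so $\log(A+\tilde f(r))$ is comparable to $N\log(\rho_T/r)$ uniformly on $\supp f$; consequently
\[
h(\tilde f(r)) \leq C\,(\rho_T/r)^N\,(\log(\rho_T/r))^{\beta-N/2-1}.
\]
The substitution $u=\log(\rho_T/r)$ (valid because $\beta-N/2<0$ ensures convergence at infinity) then produces
\[
\sup_{z\in M} \nu(B(z,\rho)) \leq C\rho_T^N\,(\log(\rho_T/\rho))^{\beta-N/2},
\]
which, combined with the heat estimate above, bounds the argument of $h^{-1}$ by $C\rho_T^N t^{-N/2}(\log(\rho_T/\sqrt{t}))^{\beta-N/2}$. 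Since $h(z)=z(\log(A+z))^\beta$ implies $h^{-1}(w)\asymp w(\log w)^{-\beta}$ for $w$ large, and since $\log w \asymp (N/2)\log(1/t)\asymp N\log(\rho_T/\sqrt{t})$ in the regime $t<\rho_T^2$, the two logarithmic factors cancel to leave precisely the exponent $-N/2$.

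The principal technical obstacle is the critical case: the two independent logarithms, one coming from the density $\tilde f$ and one from $h$, must be tracked precisely enough that the factor $(\log)^\beta$ produced by $h^{-1}$ absorbs exactly the $(\log)^\beta$ surplus in the measure estimate. This requires verifying (i) that $\log(A+\tilde f(r))$ is comparable to $N\log(\rho_T/r)$ uniformly on $\supp f$, using the choice $A>e^N$ built into \eqref{eq:hdefff}, and (ii) a sharp large-argument asymptotic for $h^{-1}$ with the right implicit constants.
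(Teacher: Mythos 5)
Your proposal is correct and follows essentially the same route as the paper's proof: apply Lemma \ref{lem:linheat} to $h(f)\,dV_g$, reduce to a one-dimensional radial integral using \eqref{ballcheck} (valid because $h\circ\tilde f$ is decreasing), evaluate separately in the two cases, and conclude via the bound \eqref{eq:hinves} on $h^{-1}$. The one point where you are more careful than the paper is making explicit the requirement $\alpha<N(p-1)/2$ (needed so that $\int_0^{\sqrt{t}}r^{N-1-2\alpha/(p-1)}\,dr$ converges), which the paper leaves implicit in its choice $1<\alpha<p$.
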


\begin{proof}
Let $0<t< \rho_T^2$. 
We first estimate $h(U)$. 
Since $h(U(x,t)) = \int_M K(x,y,t) h( f(y) )  dV_g(y)$, 
Lemma \ref{lem:linheat} gives 
\[
\begin{aligned}
	h(U(x,t))
	\leq C  t^{-\frac{N}{2}} 
	\sup_{z\in M} \int_{B(z,t^\frac{1}{2})} h( f(y)) dV_g(y). 
\end{aligned}
\]
Since $h(f(y))=h(\tilde{f}(r(y)))$ and $h(\tilde{f}(r))$ is strictly decreasing for $r\in(0,\rho_T)$, 
we can obtain \eqref{ballcheck} with replacing $\tilde{f}$ with $h(\tilde{f})$ and have 
\[
\begin{aligned}
	&\int_{B(z,t^\frac{1}{2})} h( f(y)) dV_g(y) \leq C\int_{0}^{t^{\frac{1}{2}}}r^{N-1}h(\tilde{f}(r))dr\\
	&\leq 
	\left\{ 
	\begin{aligned}
	&C \int_0^{t^\frac{1}{2}} r^{N-\frac{2\alpha}{p-1}-1} dr 
	&&\mbox{ if }p>p_F,   \\
	&C\rho_T^N \left( \frac{et^{1/2}}{\rho_T} +1 \right) 
	\int_0^{t^\frac{1}{2}}
	\frac{\rho_T}{er+\rho_T} r^{-1}
	(\log(e+\rho_T r^{-1}))^{-\frac{N}{2}-1+\beta} dr 
	&&\mbox{ if }p=p_F.
	\end{aligned}
	\right.
	\end{aligned}
\]
Then by $t< \rho_T^2$, we see that 
\[
\begin{aligned}
	\int_{B(z,t^\frac{1}{2})} h( f(y)) dV_g(y) 
	\leq 
	\left\{ 
	\begin{aligned}
	&C t^{\frac{N}{2}-\frac{\alpha}{p-1}} 
	&&\mbox{ if }p>p_F,  \\
	&C \rho_T^N (\log(e+\rho_T t^{-\frac{1}{2}}))^{-\frac{N}{2}+\beta} 
	&&\mbox{ if }p=p_F,  
	\end{aligned}
	\right.  \\
\end{aligned}
\]
where $C>0$ is a constant depending only on $N$ and $p$. 
Thus, 
\begin{equation}\label{eq:hfygcom}
	h(U(x,t))
	\leq 
	\left\{ 
	\begin{aligned}
	&C t^{-\frac{\alpha}{p-1}} 
	&&\mbox{ if }p>p_F,   \\
	&C (\rho_T t^{-\frac{1}{2}})^N (\log(e+\rho_T t^{-\frac{1}{2}}))^{-\frac{N}{2}+\beta} 
	&&\mbox{ if }p=p_F. 
	\end{aligned}
	\right. 
\end{equation}
Fundamental computations show that 
\begin{equation}\label{eq:hinves}
	h^{-1}(z)
	\left\{ 
	\begin{aligned}
	&=z^{1/\alpha}  &&\mbox{ if }p>p_F, \\
	&\leq C z (\log(A + z))^{-\beta}  &&\mbox{ if }p=p_F.  
	\end{aligned}
	\right. 
\end{equation}
From this, it follows that the desired inequality holds. 
\end{proof}

We are now in a position to prove Theorem \ref{th:necshacri}. 

\begin{proof}[Proof of Theorem \ref{th:necshacri}]
Fix $c>0$. Define $\mu(A):=\int_A c f dV_g$ and $\overline{u}$ by \eqref{eq:super}. 
Since $h$ is convex, Jensen's inequality gives 
\[
	h\left( \frac{ \int_{M} K(x,y,t)  f(y) dV_{g}(y) }
	{\int_{M} K(x,y,t) dV(y)} \right)
	\leq
	\frac{ \int_{M} K(x,y,t) h( f(y)) dV_{g}(y) }
	{\int_{M} K(x,y,t) dV_{g}(y)}. 
\]
This together with $\int_{M} K(x,y,t) dV_{g}(y)=1$ and $d\mu = c f dV_g$ yields 
\[
	\int_{M} K(x,y,t) d\mu(y)
	\leq
	c h^{-1} \left( \int_{M} K(x,y,t) h( f(y) ) dV_{g}(y) \right)
	= c U(x,t), 
\]
and so 
\begin{equation}\label{eq:ouesti}
	\Psi[\overline{u}]
	\leq 
	c U + 2^p c^p \int_0^t \int_{M} K(x,y,t-s) U(y,s)^p dV_{g}(y) ds
	=: c U + 2^p c^p I. 
\end{equation}

Let us estimate $I$. Write $\|\cdot\|_\infty:=\|\cdot\|_{L^\infty(M)}$. 
From $h(U(y,s)) = \int_M K(y,z,s) h( f(z) )  dV_g(z)$, 
Fubini's theorem and the semigroup property of $K$, it follows that 
\[
\begin{aligned}
	I &= 
	\int_0^t \int_{M} K(x,y,t-s) \frac{U(y,s)^p}{h(U(y,s))} 
	h(U(y,s)) 
	dV_{g}(y) ds \\
	&\leq 
	\int_0^t \left\| \frac{U(\,\cdot\,,s)^p}{h(U(\,\cdot\,,s))} \right\|_\infty
	\int_{M} K(x,y,t-s)  
	\int_{M} K(y,z,s) h( f(z) ) dV_{g}(z) dV_{g}(y) ds  \\
	&= 
	\int_0^t \left\| \frac{U(\,\cdot\,,s)^p}{h(U(\,\cdot\,,s))} \right\|_\infty ds 
	\int_{M} K(x,z,t) h( f(z) ) dV_{g}(z) 
	= h(U(x,t)) \int_0^t \left\| \frac{U(\,\cdot\,,s)^p}{h(U(\,\cdot\,,s))} \right\|_\infty ds. 
\end{aligned}
\]
Therefore, 
\begin{equation}\label{eq:ouestij}
\begin{aligned}
	I &\leq 
	U(x,t) \left\| \frac{h(U(\,\cdot\,,t))}{U(\,\cdot\,,t)} \right\|_\infty 
	\int_0^t \left\| \frac{U(\,\cdot\,,s)^p}{h(U(\,\cdot\,,s))} \right\|_\infty ds  
	=: U(x,t) J(t). 
\end{aligned}
\end{equation}

We estimate $J$ for $0<t<\rho_T^2$. Since 
$z\mapsto z^p/h(z)$ and $z\mapsto h(z)/z$ are increasing, 
Lemma \ref{lem:linex} and direct computations yield 
\begin{equation}\label{eq:Jtesti}
\begin{aligned}
	J(t) 
	&\leq 
	\left\{ 
	\begin{aligned}
	& \|U(\,\cdot\,,t)\|_\infty^{\alpha-1}  
	\int_0^t \|U(\,\cdot\,,s)\|_\infty^{p-\alpha} ds 
	&& (p>p_F) \\
	& (\log(A + \|U(\,\cdot\,,t)\|_\infty ))^\beta
	\int_0^t \|U(\,\cdot\,,s)\|_\infty^{p-1} 
	(\log(A + \|U(\,\cdot\,,s)\|_\infty))^{-\beta} ds 
	&& (p=p_F)
	\end{aligned}
	\right.\\
	&\leq 
	\left\{ 
	\begin{aligned}
	& C t^{-\frac{\alpha-1}{p-1}} 
	\int_0^t s^{-\frac{p-\alpha}{p-1}} ds 
	&& (p>p_F) \\
	& 
	C \rho_T^2 (\log(e + \rho_T t^{-\frac{1}{2}} ))^\beta
	\int_0^t  s^{-1} 
	(\log(e+ \rho_T s^{-\frac{1}{2}}))^{-1-\beta} ds 
	&& (p=p_F) 
	\end{aligned}
	\right. \\
	&\leq 
	\left\{ 
	\begin{aligned}
	& C  && (p>p_F) \\
	& C \rho_T^2 && (p=p_F)
	\end{aligned}
	\right. 
\end{aligned}
\end{equation}
for $0<t<\rho_T^2$, where $C>0$ is a constant depending only on $N$ and $p$.

The above estimates together with $\rho_T\leq T^{1/2}$ show that 
\[
	\Psi[\overline{u}] 
	\leq 
	\left\{ 
	\begin{aligned}
	& c U + 2^p C c^p U && \mbox{ if }p>p_F,  \\
	& c U + 2^p C T c^p  U && \mbox{ if }p=p_F, \\
	\end{aligned}
	\right.
\]
in $M\times (0,\rho_T^2)$. 
Hence we deduce that $\overline{u}$ is a supersolution 
of \eqref{eq:integ} in $M\times[0,\rho_T^2)$ provided that 
$2^p C c^{p-1}\leq 1$ if $p>p_F$ and  
$2^p C Tc^{2/N}\leq 1$ if $p=p_F$. 
Lemma \ref{lem:ite} gives an integral solution $u$ 
of \eqref{eq:main} in $M\times[0,\rho_T^2)$. 
Moreover, the estimates of $\mu$ in the statement (i) and (ii) 
follow from Lemma \ref{lem:lumu}. 
The rest of the proof to see that $u$ is a solution in $M\times[0,\rho_{T}^2)$ is the same as that of Theorem \ref{th:necsha}. 
\end{proof}

\subsection{Sufficient conditions}\label{subsec:suff}
In the previous subsections, we proved the sharpness of the necessary conditions. 
Even though Theorem \ref{th:suf} does not say the sharpness directly, 
we prove the theorem in this subsection, since all estimates for the proof have been prepared. 
The proof of Theorem \ref{th:suf} is similar 
to the proof of Theorems \ref{th:necsha}. 

\begin{proof}[Proof of Theorem \ref{th:suf}]
Let $\mu$ and $\tilde \rho \in(0,\rho_T]$ satisfy \eqref{eq:suf}, 
where $c$ is chosen later. 
Set $\overline{u}$ and $U$ be as in \eqref{eq:linsuper}. 
By \eqref{eq:UFubi}, we have 
\[
\begin{aligned}
	\Psi[\overline{u}](x,t) &=  
	U(x,t) + 2^p \int_0^t \int_{M} K(x,y,t-s) U(y,s)^p dV_{g}(y) ds  \\
	&\leq 
	U(x,t) + 2^p \left( \int_0^t \|U(\,\cdot\,,s)\|_\infty^{p-1} ds \right) U(x,t). 
\end{aligned}
\]
Lemma \ref{lem:linheat} and \eqref{eq:suf} 
with $\tilde \rho$ ($\leq \rho_T\leq 4\rho_\infty$) yield  
\[
	U(x,t)  \leq 
	C c t^{-\frac{1}{p-1}} 
	(\log(e+ \tilde \rho t^{-\frac{1}{2}})^{-\frac{1}{p-1}-\eps} 
	\quad \mbox{ for }(x,t)\in M\times (0,\tilde \rho^2), 
\]
and so 
\[
\begin{aligned}
	\int_0^t \|U(\,\cdot\,,s)\|_\infty^{p-1} ds  
	&\leq 
	C c^{p-1} 
	\int_0^t s^{-1} (\log(e+\tilde \rho s^{-\frac{1}{2}}))^{-1-(p-1)\eps} ds \\
	&\leq 
	C c^{p-1}  (\log(e+\tilde \rho t^{-\frac{1}{2}}))^{-(p-1)\eps}
	\leq 
	C c^{p-1} 
\end{aligned}
\]
for $(x,t)\in M\times(0,\tilde \rho^2)$, 
where $C>0$ is a constant depending only on $N$ and $p$. 
If $c$ satisfies $2^{p}C c^{p-1}<1$, 
then $\overline{u}$ is a supersolution of \eqref{eq:integ} 
in $M\times[0,\tilde \rho^2)$. 
This together with Lemma \ref{lem:ite} shows 
the existence of an integral solution $u$ 
of \eqref{eq:main} in $M\times[0,\tilde \rho^2)$ . 
The rest of the proof to see that $u$ is a solution in $M\times[0,\tilde{\rho}^2)$ is the same as that of Theorem \ref{th:necsha}. 
\end{proof}

\section{Nonexistence of solutions}\label{sec:nonex}
To prove nonexistence, we use a Cantor type (fractal type) set and 
show the unboundedness of some fractional maximal operator. 
In the Euclidean case, 
such sets were introduced in Kan and the first author \cite{KT17} for one dimension 
and were generalized by the first author \cite{Ta16} for higher dimension. 
In this paper, we construct such Cantor type sets on Riemannian manifolds.

Throughout this section, we assume $p\geq p_F$ and write 
\begin{equation}\label{eq:phisec5def}
	\phi(\rho):= \log\left( e+\frac{1}{\rho} \right)^{-\frac{1}{p-1}} 
	\quad \mbox{ for }\rho>0. 
\end{equation}
We frequently use the following 
monotonicity properties for $\rho>0$: 
\begin{itemize}
\item
$\phi(\rho)$ is strictly increasing. 
\item
$\rho^{N-2/(p-1)} \phi(\rho)$ 
is strictly increasing. 
\item
There exists $0<\alpha<2/(p-1)$ such that 
$\rho^{-\alpha}\phi(\rho)$ 
is strictly decreasing. 
\item
If $p>p_F$, 
there exists $0<\beta<N-2/(p-1)$ such that 
$\rho^\beta \phi(\rho)$ is strictly increasing. 
\end{itemize}

\subsection{A Cantor type set}
In this subsection, we repeat the construction of 
a Cantor type set in \cite{KT17,Ta16}. 
For $0<\sigma<2^{-N}$, define a continuous function 
\[
	F(R):= R^{N-\frac{2}{p-1}} \phi(R) - \phi(2^{-1}) \sigma 
	\quad \mbox{ for }R>0. 
\]
By $\lim_{R\to 0}F(R)=-\phi(2^{-1})\sigma<0$ and 
\[
	F(2^{-1})= ( (2^{-1})^{N-\frac{2}{p-1}} - \sigma ) \phi(2^{-1}) 
	> ( 2^{-N+\frac{2}{p-1}} - 2^{-N} ) \phi(1/2) >0
\]
together with the monotonicity of $\rho^{N-2/(p-1)} \phi(\rho)$, 
the equation $F(R)=0$ has a unique solution $0<R(\sigma)<1/2$ 
for each $\sigma$. 
Set 
\[
	R_0:=1, \qquad 
	R_n:=R(2^{-Nn}) \quad \mbox{ for }n=1,2,\ldots. 
\]
We observe that if $\sigma$ becomes small, then 
$F$ becomes large and $R(\sigma)$ becomes small. 
Therefore, we see that 
\[
	\frac{1}{2} > R_1 > R_2 >\cdots > R_n \to0
	\quad \mbox{ as }n\to\infty. 
\]
In addition, since $F(R(\sigma))=0$, we have 
\[
	2^{Nn} R_n^{N-\frac{2}{p-1}} \phi(R_n) = \phi(2^{-1}) 
	\quad \mbox{ for }n=1,2,\ldots. 
\]

We denote the ratio of $R_n$ to $R_{n-1}$ by 
\[
	r_n:=\frac{R_n}{R_{n-1}} 
	\quad \mbox{ for } n=1,2,\ldots. 
\]
Let us estimate $r_n$. 
For $n\geq 2$, we have 
\[
	\phi(2^{-1}) = 
	2^{Nn} R_n^{N-\frac{2}{p-1}} \phi(R_n) 
	= 2^{N(n-1)} R_{n-1}^{N-\frac{2}{p-1}} \phi(R_{n-1}). 
\]
The monotonicity $R_n<R_{n-1}$ gives 
\[
\begin{aligned}
	2^{Nn} R_n^{N-\frac{2}{p-1}} \phi(R_n) 
	&= 
	2^{N(n-1)} R_{n-1}^{N-\frac{2}{p-1}+\alpha} R_{n-1}^{-\alpha} \phi(R_{n-1})  \\
	&\leq 
	2^{N(n-1)} R_{n-1}^{N-\frac{2}{p-1}+\alpha} R_n^{-\alpha} \phi(R_n) 
\end{aligned}
\]
for $n\geq 2$, and so 
\[
	r_n = \frac{R_n}{R_{n-1}} 
	\leq 
	2^{- \frac{N}{ N-( 2/(p-1)-\alpha ) } } < \frac{1}{2}
	\quad 
	\mbox{ for } n\geq 2. 
\]
In addition, $r_1=R_1/R_0=R_1<1/2$. 
Thus, there exists $0<\overline{r}<1/2$ such that 
\[
	r_n \leq \overline{r} \quad \mbox{ for } n\geq 1. 
\]
Note that this upper bound is valid for all $p\geq p_F$. 
On the other hand, 
we give a lower bound only for $p>p_F$. 
By $R_{n-1}>R_n$ and similar computations to the above, we have 
\[
\begin{aligned}
	2^{Nn} R_n^{N-\frac{2}{p-1}} \phi(R_n) 
	&= 
	2^{N(n-1)} R_{n-1}^{N-\frac{2}{p-1}-\beta} R_{n-1}^{\beta} \phi(R_{n-1})  \\
	&\geq 
	2^{N(n-1)} R_{n-1}^{N-\frac{2}{p-1}-\beta} R_n^{\beta} \phi(R_n) 
\end{aligned}
\]
for $n\geq 2$, and so 
\[
	r_n = \frac{R_n}{R_{n-1}} 
	\geq 
	2^{- \frac{N}{ N- 2/(p-1)-\beta  } } 
	\quad 
	\mbox{ for } n\geq 2. 
\]
Then there exists $0<\underline{r}<1/2$ such that 
\[
	r_n \geq \underline{r} \quad \mbox{ for } n\geq 1 
	\mbox{ and }p>p_F. 
\]

We define a Cantor type set $\{I_n\}_{n=0}^\infty\subset \R$ 
by $I_n:=\bigcup_{l=1}^{2^n}(a_{n,l},b_{n,l})$ for $n\geq0$, 
where  
\begin{equation}\label{eq:ajlibjli}
\begin{aligned}
	&a_{0,1}:=0, &&\quad  b_{0,1}:=1, \\
	&a_{n+1,2l-1}=a_{n,l}, &&\quad  b_{n+1,2l-1}:=a_{n,l}+R_{n+1}, \\
	&a_{n+1,2l}=b_{n,l}-R_{n+1}, &&\quad  b_{n+1,2l}:=b_{n,l}. 
\end{aligned}
\end{equation}
We note that $b_{n,l}-a_{n,l}=R_n$ and that 
\[
\begin{aligned}
	&I_0=(0,1), \\
	&I_1=(0,R_1) \cup (1-R_1,1), \\
	&I_2=(0,R_2) \cup (R_1-R_2,R_1) \cup 
	(1-R_1+R_2,1-R_1) \cup (1-R_2,1), 
\end{aligned}
\]
and so on. 
By $r_n=R_n/R_{n-1}\leq \overline{r}<1/2$, 
we also note that 
\[
\begin{aligned}
	&0=a_{n,1}<b_{n,1}<a_{n,2}<b_{n,2}<\cdots < 
	a_{n,2^n}<b_{n,2^n}=1, \\
	&I_n \supset I_{n+1} \supset I_{n+2} \supset \cdots, 
\end{aligned}
\]
for $n\geq1$. 
Thus, 
each $I_n$ is a disjoint union of $2^n$ small subintervals of $(0,1)$.

\subsection{Unboundedness of a fractional maximal operator}
In this subsection, we define some fractional maximal operator
and show its unboundedness. 
To define the operator, we set up notation. 
We define a Morrey type space by 
\begin{equation}\label{eq:YfYdefsu}
\begin{aligned}
	&Y:= \{ f; \mbox{ $f\in L^1_\loc(M)$ with } \|f\|_Y<\infty\}, \\
	&\|f\|_Y:=
	\sup_{x\in M} \sup_{0<\rho<\rho_{\infty}} 
	\left( \phi(\rho)^{-1} \rho^{-(N-\frac{2}{p-1})} 
	\int_{B(x,\rho)} |f(y)| dV_{g}(y) \right). 
\end{aligned}
\end{equation}
For $\xi=(\xi_1,\ldots,\xi_N)\in\R^N$ and $\rho>0$, we write 
\[
\begin{aligned}
	&Q(\xi,\rho):= 
	\{\eta=(\eta_1,\ldots,\eta_N)\in \R^N;  
	\xi_i-\rho < \eta_i < \xi_i+\rho \quad 
	\mbox{ for }1\leq i\leq N\}, \\
	&Q_\rho:=(0,\rho)\times \cdots  \times (0,\rho). 
\end{aligned}
\]
Set 
\[
\begin{aligned}
	&P=(\rho_1,\ldots,\rho_N)\in Q_a, \qquad 
	a:=\frac{1}{\sqrt{N}}\min\{\rho_{\infty},1\},
	\\
	&D_P(\xi):=\prod_{i=1}^N (\xi_i+(1-2\overline{r})\rho_i, \xi_i+\rho_i). 
\end{aligned}
\]
Let $z_0\in M$. Fix 
an orthonormal basis $(e_{1},\dots,e_{N})$ of $T_{z_0}M$. 
By using this basis, we identify $T_{z_0}M$ and $\R^N$. 
Recall that $\rho_{\infty}\leq\inj(M)$. 
Then the restriction of 
\[
	\R^N\ni (x_1,\ldots,x_N)\mapsto \exp_{z_0}(x_1 e_1+\dots+x_N e_{N})\in M
\]
to $B(O,\rho_{\infty})\subset \R^N$ is an injection, 
where $O$ is the origin of $\R^N$. 
We denote the inverse of this map by 
\[
	\varphi: B(z_0,\rho_{\infty})\ni x \mapsto 
	(x_1,\ldots,x_N) \in \R^N. 
\]
We also write $\varphi(x)=\xi$. 
Let $|\cdot|_{\R^N}$ be the Euclidean norm on $\R^N$. 
We write $|\cdot|=|\cdot|_{\R^N}$ when no confusion can arise. 

For $f\in Y$ and $x\in \varphi^{-1}(Q_a)$, 
define a fractional maximal operator by 
\[
	\cH[f](x):= \sup_{P=(\rho_1,\ldots,\rho_N)\in Q_a}
	|P|^{-N(1-\frac{2}{Np})} \int_{D_P(\varphi(x))}  f(\varphi^{-1}(\eta)) d\eta. 
\]
Our temporal goal is to show the unboundedness of 
$\cH$ from $Y$ to $L^p(\varphi^{-1}(Q_a))$. 
More explicitly, we define functions $f_n$ by 
\begin{equation}\label{eq:gndefchiIn}
	f_n(x):= \left\{ 
	\begin{aligned}
	&\prod_{i=1}^N \chi_{I_n}(a^{-1}\varphi_{i}(x)) 
	&& \mbox{ if } x\in \varphi^{-1}(Q_a),\\
	&0 && \mbox{ if } x\notin \varphi^{-1}(Q_a), 
	\end{aligned}
	\right.
\end{equation}
and we prove that 
\begin{equation}\label{eq:Mgnunbdd}
	\frac{\|\cH[f_n]\|_{L^p(\varphi^{-1}(Q_a))}}{\|f_n\|_Y} \to \infty
	\quad \mbox{ as }n\to\infty. 
\end{equation}
Here $I_n\subset (0,1)$ is given by the previous subsection, 
$\chi_{I_n}$ is the characteristic function on $I_n$ and 
$\varphi_i(x)$ is the $i$-th component of $\varphi(x)$. 

First, we prove that there exists $C>0$ independent of $n$ 
such that 
\begin{equation}\label{eq:gnYup}
	\|f_n\|_Y \leq C(2^n R_n)^N. 
\end{equation}
Since $\supp f_n\subset \{ x\in M; \varphi(x)\in Q_a\}$, we have 
\begin{equation}\label{dx0x}
	\sup_{x\in \supp f_n} d(z_0,x)
	\leq 
	d( z_0, \varphi^{-1}(a,\ldots,a) ) 
	=|(a,\ldots,a)| 
	= \sqrt{N}a\leq \rho_{\infty}.  
\end{equation}
Thus, for each $x\in M$, we have 
\[
\begin{aligned}
	&
	\sup_{0<\rho<\rho_{\infty}} 
	\left( \phi(\rho)^{-1} \rho^{-(N-\frac{2}{p-1})} 
	\int_{B(x,\rho)} |f_n(y)| dV_{g}(y) \right) \\
	&= 
	\sup_{0<\rho<\rho_{\infty}} 
	\left( \phi(\rho)^{-1} \rho^{-(N-\frac{2}{p-1})} 
	\int_{B(x,\rho)\cap B(z_0,\rho_{\infty})} |f_n(y)| dV_{g}(y) \right) \\
	&= 
	\sup_{0<\rho<\rho_{\infty}} 
	\left( \phi(\rho)^{-1} \rho^{-(N-\frac{2}{p-1})} 
	\int_{\varphi(B(x,\rho)\cap B(z_0,\rho_{\infty}))} 
	|f_n(\varphi^{-1}(\eta))| ((\varphi^{-1})^{*}dV_{g})(\eta) \right). 
\end{aligned}
\]
Remark that 
\begin{equation}\label{eq:invvol}
	((\varphi^{-1})^{*}dV_{g})(\eta)= \sqrt{\det(g_{ij}(\eta))} d\eta. 
\end{equation}
Thus, by the volume comparison theorem \eqref{prep2}, we have 
\[
\begin{aligned}
	\int_{\varphi(B(x,\rho)\cap B(z_0,\rho_{\infty}))} 
	|f_n(\varphi^{-1}(\eta))| ((\varphi^{-1})^{*}dV_{g})(\eta) 
	&\leq 
	2^{N-1}\int_{\varphi(B(x,\rho)\cap B(z_0,\rho_{\infty}))} 
	|f_n(\varphi^{-1}(\eta))| d\eta \\
	&= 2^{N-1}\int_{\varphi(B(x,\rho)\cap B(z_0,\rho_{\infty}))} 
	\prod_{i=1}^N \chi_{I_n}(a^{-1}\eta_{i}) d\eta. 
\end{aligned}
\]

We estimate the integral in the right-hand side from above. 
It suffices to consider the case where $B(x,\rho)\cap B(z_0,\rho_{\infty})\neq\emptyset$. 
Take $\bar{x}\in B(x,\rho)\cap B(z_0,\rho_{\infty})$ and 
set $\bar{\xi}:=\varphi(\bar{x})\in B(O,\rho_{\infty})$. 
Then, for any $x'\in B(x,\rho)\cap B(z_0,\rho_\infty)$, 
by the distance comparison theorem \eqref{prep7} 
with $\rho\leq \rho_{\infty}$, we have
$|\varphi_{i}(x')-\varphi_{i}(\bar{x})|\leq 2d(x',\bar{x})\leq  4\rho$. 
This together with $\xi_{i}=\varphi_{i}(x)$ implies 
$\varphi(B(x,\rho)\cap B(z_0,\rho_{\infty}))\subset Q(\bar{\xi},4\rho)$.

Thus, we have  
\[
\begin{aligned}
	\int_{\varphi(B(x,\rho)\cap B(z_0,\rho_{\infty}))} 
	\prod_{i=1}^N \chi_{I_n}(a^{-1}\eta_{i}) d\eta
	&\leq 
	\int_{Q(\bar{\xi},4\rho)} 
	 \prod_{i=1}^N \chi_{I_n}(a^{-1} \eta_i) d\eta \\
	&= 
	a^N\int_{Q(a^{-1}\bar{\xi},a^{-1}4\rho)} 
	\prod_{i=1}^N \chi_{I_n}(\zeta_i) d\zeta. 
\end{aligned}
\]
Thus, 
\[
\begin{aligned}
	\|f_n\|_Y  &\leq 
	C \sup_{\bar{\xi}\in B(O,\rho_{\infty})} \sup_{0<\rho<\rho_{\infty}} 
	\left( \phi(\rho)^{-1} \rho^{-(N-\frac{2}{p-1})} 
	\int_{Q(a^{-1}\bar{\xi},a^{-1}4\rho)} 
	\prod_{i=1}^N \chi_{I_n}(\zeta_i) d\zeta \right)  \\
	&\leq 
	C \sup_{\xi\in \R^N} \sup_{\rho>0} 
	\left( \phi(\rho)^{-1} \rho^{-(N-\frac{2}{p-1})} 
	\int_{Q(\xi,a^{-1}4\rho)} 
	\prod_{i=1}^N \chi_{I_n}(\zeta_i) d\zeta \right) \\
	&=
	C \sup_{\xi\in \R^N} \sup_{\rho>0} 
	\left( \phi\left( 4^{-1}a \rho \right)^{-1} 
	\left( 4^{-1}a \rho \right)^{-(N-\frac{2}{p-1})} 
	\int_{Q(\xi,\rho)} 
	\prod_{i=1}^N \chi_{I_n}(\zeta_i) d\zeta \right). 
\end{aligned}
\]
Recall that there exists $0<\alpha<2/(p-1)$ 
such that 
$\rho^{-\alpha}\phi(\rho)$ is strictly decreasing. 
This together with $4^{-1}a\rho<2\rho$ yields 
\[
\begin{aligned}
	\phi\left( 4^{-1}a \rho \right)^{-1} 
	\left( 4^{-1}a \rho \right)^{-(N-\frac{2}{p-1})} 
	&=
	\left( \left( 4^{-1}a \rho\right)^{-\alpha} 
	\phi\left( 4^{-1}a \rho \right)  \right)^{-1}
	\left(4^{-1}a \rho\right)^{-\alpha} 
	\left( 4^{-1}a \rho\right)^{-(N-\frac{2}{p-1})} \\
	&\leq 
	\left( (2\rho)^{-\alpha} \phi(2\rho)  \right)^{-1}
	\left( 4^{-1}a \rho\right)^{-\alpha} 
	\left( 4^{-1}a \rho\right)^{-(N-\frac{2}{p-1})} \\
	&= 
	C \phi(2\rho)^{-1} \rho^{-(N-\frac{2}{p-1})}. 
\end{aligned}
\]
From Fubini's theorem, it follows that 
\[
\begin{aligned}
	\|f_n\|_Y  &\leq 
	C \sup_{\xi\in \R^N} \sup_{\rho>0} 
	\left( \phi(2\rho)^{-1} \rho^{-(N-\frac{2}{p-1})} 
	\int_{Q(\xi,\rho)} 
	\prod_{i=1}^N \chi_{I_n}(\zeta_i) d\zeta \right)  \\
	&=
	C \sup_{\xi\in \R^N} \sup_{\rho>0} 
	\left( \phi(2\rho)^{-1} \rho^{-(N-\frac{2}{p-1})} 
	\prod_{i=1}^N \int_{\xi_i-\rho}^{\xi_i+\rho} \chi_{I_n}(\zeta_i) d\zeta_i \right) \\
	&=
	C \sup_{\xi\in \R^N} \sup_{\rho>0} \prod_{i=1}^N 
	\left( \phi(2\rho)^{-\frac{1}{N}} \rho^{-\frac{1}{N}(N-\frac{2}{p-1})} 
	\int_{\xi_i-\rho}^{\xi_i+\rho} \chi_{I_n}(\eta) d\eta \right) \\
	&\leq 
	C \prod_{i=1}^N 
	\sup_{\xi_i\in \R} \sup_{\rho_i>0} 
	\left( \phi(2\rho_i)^{-\frac{1}{N}} \rho_i^{-\frac{1}{N}(N-\frac{2}{p-1})} 
	\int_{\xi_i-\rho_i}^{\xi_i+\rho_i} \chi_{I_n}(\eta) d\eta \right). 
\end{aligned}
\]
Then by exactly the same computations as in 
\cite[Lemma 2]{Ta16} (see page 263, line $-6$), we obtain \eqref{eq:gnYup}.

Next, we show that there exists $C>0$ independent of $n$ such that 
\begin{equation}\label{eq:MgnLplow}
	\|\cH[f_n]\|_{L^p(\varphi^{-1}(Q_a))}^p \geq
	\left\{ 
	\begin{aligned}
	&\frac{1}{C} (2^nR_n)^{Np} \sum_{j=0}^{n-1} \phi(R_j)^{p-1} 
	&&\mbox{ if } p>p_F, \\
	&(2^nR_n)^{Np} \left\{ 
	\frac{1}{C} \sum_{j=0}^{n-1} 
	\left( \phi(R_j)^{p-1} \log \frac{R_j}{R_{j+1}} \right) -C \right\}
	&&\mbox{ if } p=p_F. 
	\end{aligned}
	\right. 
\end{equation}
Let $x\in \varphi^{-1}(Q_a)$. 
We write $\varphi(x)=\xi=(\xi_1,\ldots,\xi_N)$. Then, 
\[
	\cH[f_n](x)  
	= \sup_{P=(\rho_1,\ldots,\rho_N)\in Q_a}
	|P|^{-N(1-\frac{2}{Np})} 
	\int_{D_P(\xi)}  f_n(\varphi^{-1}(\eta)) d\eta.  
\]
Fubini's theorem and the change of variables yield 
\[
\begin{aligned}
	\int_{D_P(\xi)}  f_n(\varphi^{-1}(\eta)) d\eta
	&= 
	\int_{D_P(\xi)}  \prod_{i=1}^N 
	\chi_{I_n}(a^{-1}\eta) d\eta \\
	&=
	\prod_{i=1}^N  \int_{\xi_i+(1-2\overline{r})\rho_i}^{\xi_i+\rho_i}
	\chi_{I_n}(a^{-1}\eta_i) d\eta_i 
	=
	a^{-N}\prod_{i=1}^N  
	\int_{a^{-1}(\xi_i+(1-2\overline{r})\rho_i)}^{a^{-1}(\xi_i+\rho_i)}
	\chi_{I_n}(\zeta_i) d\zeta_i. 
\end{aligned}
\]
By writing 
$\tilde P=(\tilde \rho_1,\ldots,\tilde \rho_N)
:=(a^{-1} \rho_1,\ldots,a^{-1}\rho_N)$ and 
$\tilde \xi=(\tilde \xi_1,\ldots,\tilde \xi_N)
:=(a^{-1} \xi_1,\ldots,a^{-1}\xi_N)$, we see that 
\[
\begin{aligned}
	\cH[f_n](x)  
	&= 
	a^{-N}
	\sup_{P=(\rho_1,\ldots,\rho_N)\in Q_a}
	|P|^{-N(1-\frac{2}{Np})} \prod_{i=1}^N  
	\int_{a^{-1}(\xi_i+(1-2\overline{r})\rho_i)}^{a^{-1}(\xi_i+\rho_i)}
	\chi_{I_n}(\zeta_i) d\zeta_i \\
	&=
	a^{-N-N(1-\frac{2}{Np})} 
	\sup_{\tilde P=(\tilde \rho_1,\ldots,\tilde \rho_N)\in Q_1}
	|\tilde P|^{-N(1-\frac{2}{Np})} 
	\prod_{i=1}^N  
	\int_{\tilde \xi_i+(1-2\overline{r})\tilde \rho_i}^{\tilde \xi_i+\tilde \rho_i}
	\chi_{I_n}(\zeta_i) d\zeta_i.  
\end{aligned}
\]
Remark that $\tilde \xi\in Q_1$, since $x\in \varphi^{-1}(Q_a)$. 
The function defined by the supremum in the right-hand side 
has exactly the same form as the fractional maximal function 
defined in \cite[(13)]{Ta16}. 
Thus, by exactly the same argument as in 
the first part of the proof of \cite[Lemma 3]{Ta16} 
(see page 265, lines 3--15), 
we obtain 
\begin{equation}\label{eq:Mgnlowaxi}
	\cH[f_n](\varphi^{-1}(a\tilde \xi)) = \cH[f_n](x)  
	\geq 
	a^{-N-N(1-\frac{2}{Np})} (2^{n-j-1} R_n)^N 
	\left( \sum_{i=1}^N (b_{j,l_i}-\tilde \xi_i)^2 
	\right)^{-\frac{N}{2}(1-\frac{2}{Np})}
\end{equation}
for $x\in \varphi^{-1}(Q_a)$ 
with $\tilde \xi\in J_{j,l_1,\ldots,l_N}$ 
and $\tilde \xi=a^{-1}\xi=a^{-1}\varphi(x)$, 
where $J_{j,l_1,\ldots,l_N}$ is defined by 
\[
	J_{j,l_1,\ldots,l_N}:= J_{j,l_1}\times \cdots \times J_{j,l_N}, \quad 
	J_{j,l_i}:=
	(a_{j,l_i}+R_{j+1}, b_{j,l_i}-(2\overline{r})^{-1}R_{j+1}), 
\]
for $n\geq1$, $0\leq j\leq n-1$, $1\leq i\leq N$ and $1\leq l_i\leq 2^j$. 
Here $a_{j,l_i}$ and $b_{j,l_i}$ are given by \eqref{eq:ajlibjli}. 
Note that $J_{j,l_1,\ldots,l_N}$ satisfies 
\begin{equation}\label{eq:JjliNdis}
\begin{aligned}
	&\left( \bigcup_{1\leq l_1,\ldots,l_N\leq 2^j} 
	J_{j,l_1,\ldots,l_N} \right) \cap
	\left( \bigcup_{1\leq l_1',\ldots,l_N'\leq 2^{j'}} 
	J_{j',l_1',\ldots,l_N'} \right)=\emptyset
	\quad \mbox{ for }
	j\neq j', \\
	&\bigcup_{j=0}^{n-1}  \bigcup_{1\leq l_1,\ldots,l_N\leq 2^j} 
	J_{j,l_1,\ldots,l_N} \subset Q_1. 
\end{aligned}
\end{equation}

The change of variables gives 
\[
	\|\cH[f_n]\|_{L^p(\varphi^{-1}(Q_a))}^p
	= \int_{\varphi^{-1}(Q_a)} |\cH[f_n](x)|^p dV_{g}(x) 
	= 
	\int_{Q_a} \cH[f_n](\varphi^{-1}(\xi))^p ((\varphi^{-1})^{*}dV_{g})(\xi). 
\]
By \eqref{eq:invvol}, we have 
$((\varphi^{-1})^{*}dV_{g})(\xi)= \sqrt{\det(g_{ij}(\xi))} d\xi$. 
Since 
$Q_{a}\subset B(O,\sqrt{N}a)\subset B(O,\rho_{\infty})$, 
we also have $
	\sqrt{\det(g_{ij}(\xi))}\geq 2^{1-N}$ 
by the volume comparison theorem \eqref{prep4}. 
These together with \eqref{eq:JjliNdis} imply that 
\[
\begin{aligned}
	\|\cH[f_n]\|_{L^p(\varphi^{-1}(Q_a))}^p
	&\geq 
	2^{1-N}\int_{Q_a} \cH[f_n](\varphi^{-1}(\xi))^p d\xi \\
	&=
	2^{1-N} a^N 
	\int_{Q_1} \cH[f_n](\varphi^{-1}(a\tilde \xi))^p d\tilde \xi  \\
	&\geq 
	2^{1-N} a^N \sum_{j=0}^{n-1}  \sum_{1\leq l_1,\ldots,l_N\leq 2^j} 
	\int_{J_{j,l_1,\ldots,l_N}} \cH[f_n](\varphi^{-1}(a\tilde \xi))^p d\tilde \xi. 
\end{aligned}
\]
From \eqref{eq:Mgnlowaxi}, it follows that 
\[
	\|\cH[f_n]\|_{L^p(\varphi^{-1}(Q_a))}^p
	\geq 
	\frac{1}{C} \sum_{j=0}^{n-1} 
	(2^{n-j-1} R_n)^{Np} 
	\sum_{1\leq l_1,\ldots,l_N\leq 2^j} 
	\int_{J_{j,l_1,\ldots,l_N}} 
	\left( \sum_{i=1}^N (b_{j,l_i}-\tilde \xi_i)^2 
	\right)^{-\frac{Np}{2}(1-\frac{2}{Np})}
	d\tilde \xi, 
\]
where $C>0$ is a constant depending only on $N$, $p$ and $a$. 
The same expression in the right-hand side 
appears in the middle part of the proof of \cite[Lemma 3]{Ta16} 
(see page 265, line $-3$). 
Then by exactly the same argument as in \cite[Lemmas 3, 4 and 5]{Ta16}, 
we obtain \eqref{eq:MgnLplow}.

Finally, combining \eqref{eq:gnYup} and \eqref{eq:MgnLplow} gives 
\[
\begin{aligned}
	\frac{\|\cH[f_n]\|_{L^p(\varphi^{-1}(Q_a))}^p}{\|f_n\|_Y^p}
	\geq
	\left\{ 
	\begin{aligned}
	&\frac{1}{C} \sum_{j=0}^{n-1} \phi(R_j)^{p-1} 
	&&\mbox{ if } p>p_F, \\
	&
	\frac{1}{C} \sum_{j=0}^{n-1} 
	\left( \phi(R_j)^{p-1} \log \frac{R_j}{R_{j+1}} \right) -C 
	&&\mbox{ if } p=p_F, 
	\end{aligned}
	\right. 
\end{aligned}
\]
where $\phi$ is given by \eqref{eq:phisec5def}. 
This together with 
$\int_0^1 \eta^{-1} \phi(\eta)^{p-1}d\eta =\infty$ 
and \cite[Lemma 6]{Ta16} 
shows \eqref{eq:Mgnunbdd}, 
the unboundedness of $\cH$ from $Y$ to $L^p(\varphi^{-1}(Q_a))$.

\subsection{Existence of a specific initial data}
We prove the existence of a specific initial data in $Y$
based on the unboundedness of $\cH$ from $Y$ to $L^p(\varphi^{-1}(Q_a))$ 
and the closed graph theorem. 
The idea of using the theorem is due to Brezis and Cazenave 
\cite[Theorem 11]{BC96}.

For a function $f$ on $M$, define 
\[
	\cL[f](x,t):= \int_{M} K(x,y,t) f(y) dV_{g}(y). 
\]
We write 
$Q_\rho':=\varphi^{-1}(Q_\rho)\subset B(z_0,\inj(M))$ for 
$0<\rho<\inj(M)/\sqrt{N}$.

\begin{lemma}\label{lem:g_1Iunit}
There exists a nonnegative function $\tilde f\in Y$ 
such that 
$\tilde f=0$ a.e. in $M \setminus \overline{Q_a'}$ and 
$\|\cL[\tilde f]\|_{L^p(Q_a'\times I_0)}=\infty$, 
where $I_0:=(0,1)$. 
\end{lemma}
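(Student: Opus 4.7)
The plan is to argue by contradiction via the closed graph theorem, in the spirit of Brezis and Cazenave \cite[Theorem 11]{BC96}, and to then invoke the unboundedness $\|\cH[f_n]\|_{L^p(\varphi^{-1}(Q_a))}/\|f_n\|_Y \to \infty$ established in the preceding subsection. Let $Y_0 \subset Y$ be the closed subspace of nonnegative functions vanishing a.e.\ on $M \setminus \overline{Q_a'}$. Suppose, toward a contradiction, that $\cL[f] \in L^p(Q_a' \times I_0)$ for every $f \in Y_0$. Then $T \colon Y_0 \to L^p(Q_a' \times I_0)$, $Tf := \cL[f]$, is a well-defined linear operator between Banach spaces.

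First I would verify that $T$ has closed graph. Since $\overline{Q_a'}$ is compact and contained in $B(z_0, \inj(M))$, it can be covered by finitely many balls of radius strictly less than $\rho_\infty$, which yields $\|f\|_{L^1(M)} \leq C \|f\|_Y$ for all $f \in Y_0$. If $f_n \to f$ in $Y_0$ and $Tf_n \to g$ in $L^p(Q_a' \times I_0)$, then $\|f_n - f\|_{L^1(M)} \to 0$, and the heat kernel upper bound \eqref{prep9} together with dominated convergence forces $\cL[f_n] \to \cL[f]$ pointwise on $Q_a' \times I_0$; thus $g = \cL[f]$ a.e. The closed graph theorem therefore produces $C_0 > 0$ such that $\|\cL[f]\|_{L^p(Q_a' \times I_0)} \leq C_0 \|f\|_Y$ for every $f \in Y_0$.

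Next I would establish the complementary bound $\|\cL[f]\|_{L^p(Q_a' \times I_0)} \geq c_0 \|\cH[f]\|_{L^p(Q_a')}$ via the Gaussian lower bound \eqref{prep10} of the heat kernel. For $x \in Q_a'$ with $\xi := \varphi(x)$ and any $P = (\rho_1,\ldots,\rho_N) \in Q_a$, every $\eta \in D_P(\xi)$ satisfies $|\eta - \xi| \leq \sqrt{N}\,|P|$, so by distance comparison \eqref{prep7}, $d(x, \varphi^{-1}(\eta)) \leq C|P|$. Combining this with volume comparison \eqref{prep2} and \eqref{prep10} produces a constant $c > 0$ such that for all $t \in [c|P|^2, 4c|P|^2]$,
\[
	\cL[f](x,t) \geq c'\,|P|^{-N} \int_{D_P(\xi)} f(\varphi^{-1}(\eta))\, d\eta.
\]
Selecting $P = P^*(x) \in Q_a$ that nearly saturates $\cH[f](x)$ (shrinking $a$ once and for all so that $4c|P^*|^2 < 1$) and integrating over the $t$-interval of length $\sim |P^*|^2$, the algebraic identity $p(-N + 2/p) = 2 - Np$ gives
\[
	\int_0^1 \cL[f](x,t)^p\, dt \geq c_1 \bigl( |P^*(x)|^{-N+2/p} \int_{D_{P^*(x)}(\xi)} f(\varphi^{-1}) \bigr)^p \geq c_2\, \cH[f](x)^p.
\]
Integrating in $x$ over $Q_a'$ with volume comparison yields the desired $L^p$ comparison.

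Combining the two inequalities forces $\|\cH[f]\|_{L^p(Q_a')} \leq C \|f\|_Y$ for all $f \in Y_0$, in direct contradiction with \eqref{eq:Mgnunbdd}. Hence the hypothesis that $\cL[f] \in L^p(Q_a' \times I_0)$ for every $f \in Y_0$ must fail, producing the required $\tilde f \in Y_0$. The main technical obstacle is the pointwise-to-$L^p$ step: one has to align the isotropic scale $\sqrt{t}$ of the Gaussian lower bound with the anisotropic, one-sided rectangles $D_P$ defining $\cH$, by taking $t \sim |P|^2$ in the Euclidean norm and integrating $t$ over an interval of length $\sim |P|^2$; this is precisely what generates the exponent $-N + 2/p$ appearing in the definition of $\cH$ and makes the chain of comparisons uniform in $x$ and $P$.
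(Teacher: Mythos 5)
Your proposal follows the paper's proof of this lemma step for step: define a closed set of functions supported in $\overline{Q_a'}$, apply the closed graph theorem to the operator $f\mapsto \cL[f]$, bound $\cH[f]$ pointwise by $\|\cL[f](x,\cdot)\|_{L^p(I_0)}$ using the Gaussian lower bound \eqref{prep10} with $t\sim|P|^2$, and contradict \eqref{eq:Mgnunbdd}. The structure is correct and essentially identical to what the authors do.

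There is, however, one genuine technical slip you should repair. You define $Y_0\subset Y$ as ``the closed subspace of nonnegative functions vanishing a.e.\ on $M\setminus\overline{Q_a'}$'' and then invoke the closed graph theorem on $T\colon Y_0\to L^p(Q_a'\times I_0)$ ``between Banach spaces.'' But the set of nonnegative $f$ is a closed convex cone, not a linear subspace, so it is not a Banach space and the closed graph theorem does not apply directly. The paper sidesteps this by defining $\tilde Y:=\{f\in Y : f=0 \text{ a.e.\ in } M\setminus\overline{Q_a'}\}$ with no sign restriction (which \emph{is} a closed linear subspace), and then observing that the contradiction hypothesis $\|\cL[f]\|_{L^p}<\infty$ for all nonnegative $f\in\tilde Y$ extends to all of $\tilde Y$ by writing $f=f^+-f^-$. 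You need this extra sentence to make $T$ a linear operator between Banach spaces. Separately, the aside about ``shrinking $a$ once and for all so that $4c|P^*|^2<1$'' is both unnecessary and potentially circular: $a$ has already been fixed to $\min\{\rho_\infty,1\}/\sqrt N$, which guarantees $|P|<\sqrt N\,a\le1$ for $P\in Q_a$, so the time interval $(|P|^2/2,|P|^2)$ used in the paper already lies in $I_0$; shrinking $a$ at this stage would require re-deriving \eqref{eq:Mgnunbdd} for the new $a$. Working with the interval $(|P|^2/2,|P|^2)$ as in the paper avoids this.
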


\begin{proof}
Define a closed subspace $\tilde Y$ of $Y$ by 
\[
	\tilde Y:=\{f\in Y; f=0 \mbox{ a.e. in }M\setminus \overline{Q_a'}\}. 
\]
To obtain a contradiction, suppose that 
\[
	\| \cL[f]\|_{L^p(Q_a'\times I_0)}<\infty
	\quad 
	\mbox{ for any }f\in \tilde Y \mbox{ with }f\geq0. 
\]
Under this assumption, 
by dividing the positive part and the negative part 
for sign-changing functions, we also have 
$\| \cL[f]\|_{L^p(Q_a'\times I_0)}<\infty$ for any $f\in \tilde Y$. 
Then, the following linear operator 
\[
	\cL:\tilde Y\ni f\mapsto \cL[f] \in L^p(Q_a'\times I_0)
\]
is well-defined. 
Since $\tilde Y$ is a closed subspace of $Y$, 
$\tilde Y$ is a Banach space with $\|\cdot\|_Y$. 
Therefore, the boundedness of $\cL:\tilde Y \to L^p(Q_a'\times I_0)$ 
will follow from the closed graph theorem once we prove closedness.

Let $\{f_n\}\subset \tilde Y$. 
We assume that there exists 
$f_\infty\in \tilde Y$ satisfying  
$f_n\to f_\infty$ in $Y$. 
We also assume the existence of 
$\tilde f_\infty\in L^p(Q_a'\times I_0)$ such that 
$\cL[f_n]\to \tilde f_\infty$ in $L^p(Q_a'\times I_0)$. 
To see the closedness of $\cL:\tilde Y \to L^p(Q_a'\times I_0)$, 
we check $\cL[f_\infty]=\tilde f_\infty$. 
From Fubini's theorem and $f_n, f_\infty \in \tilde Y$, it follows that 
\[
\begin{aligned}
	\| \cL[f_n]-\cL[f_\infty] \|_{L^1(Q_a'\times I_0)} 
	&\leq \int_{I_0} \int_{Q_a'} \int_{M} 
	K(x,y,t) |f_n(y)-f_\infty(y)| dV_{g}(y) dV_{g}(x) dt  \\
	&\leq 
	\int_{I_0} \int_{M} \int_{M} 
	K(x,y,t) |f_n(y)-f_\infty(y)| dV_{g}(y) dV_{g}(x) dt  \\
	&=
	\int_{M} |f_n(y)-f_\infty(y)| dV_{g}(y)
	=
	\int_{\overline{Q_a'}} |f_n(y)-f_\infty(y)| dV_{g}(y). 
\end{aligned}
\]
Since $\overline{Q_a'}$ is compact, 
there exist $k$ and $x_1',\ldots,x_k'\in M$ 
such that $\overline{Q_a'}\subset \bigcup_{i=1}^k B(x_i',1/4)$, 
where $k$ is independent of $n$. 
Thus, 
\[
\begin{aligned}
	\int_{\overline{Q_a'}} |f_n(y)-f_\infty(y)| dV_{g}(y)
	&\leq 
	\sum_{i=1}^k 
	\int_{B(x_i', 1/4)} |f_n(y)-f_\infty(y)| dV_{g}(y) \\
	&\leq 
	\sum_{i=1}^k 
	\phi(1/4) (1/4)^{N-\frac{2}{p-1}}
	\| f_n-f_\infty\|_Y   
	= 
	C\| f_n-f_\infty\|_Y, 
\end{aligned}
\]
and so 
$\| \cL[f_n]-\cL[f_\infty] \|_{L^1(Q_a'\times I_0)} 
\leq C\| f_n-f_\infty\|_Y \to 0$ 
as $n\to\infty$. 
On the other hand, 
by $\cL[f_n]\to \tilde f_\infty$ in $L^p(Q_a'\times I_0)$, 
we have 
$\| \cL[f_n]-\tilde f_\infty \|_{L^1(Q_a'\times I_0)} \to 0$
as $n\to\infty$. 
Hence $\cL[f_\infty]=\tilde f_\infty$, 
and so $\cL:\tilde Y \to L^p(Q_a'\times I_0)$ is closed. 
By the closed graph theorem, $\cL$ is also bounded. 
In particular, there exists a constant $C$ such that 
\begin{equation}\label{eq:Lgbddg}
	\|\cL[f]\|_{L^p(Q_a'\times I_0)}
	\leq C \|f\|_Y
	\quad \mbox{ for any }f\in \tilde Y. 
\end{equation}

Fix a nonnegative function $f\in \tilde Y$. 
We claim that 
\begin{equation}\label{eq:MgLgesker}
	\|\cL[f](x,\,\cdot\,)\|_{L^p(I_0)} \geq C \cH[f](x)
	\quad \mbox{ for any }x\in Q_a'. 
\end{equation}
Fix a point $P=(\rho_1,\ldots,\rho_N)\in Q_a$. 
By $a\leq 1/\sqrt{N}$, we have 
$|P|< \sqrt{N}a\leq 1$, and so 
$(|P|^2/2,|P|^2)\subset I_0$. 
Let $x\in Q_a'$ and $t\in (|P|^2/2,|P|^2)$. 
Then, 
\[
	\cL[f](x,t) =
	\int_{M} K(x,y,t) f(y) dV_{g}(y)
	= \int_{Q'_a} K(x,y,t) f(y) dV_{g}(y). 
\]
The lower bound of the heat kernel \eqref{prep10} gives 
\[
	K(x,y,t)\geq 
	C t^{-\frac{N}{2}}e^{-\frac{d(x,y)^2}{2t}}
	\quad\mbox{ for }x,y\in Q'_{a} \mbox{ and } t\in(0,1), 
\]
where $C>0$ is a constant depending only on 
$N$ and $\kappa$. 
This together with $Q_{a}'=\varphi^{-1}(Q_{a})$, 
$\xi=\varphi(x)$, \eqref{eq:invvol} and 
the volume comparison theorem \eqref{prep4} yields 
\[
\begin{aligned}
	\cL[f](x,t) &\geq 
	C  \int_{ Q_{a}' } 
	t^{-\frac{N}{2}} e^{-\frac{d(x,y)^2}{2t} } f(y) dV_{g}(y)  \\
	&= 
	C  \int_{ Q_{a} } 
	t^{-\frac{N}{2}} e^{-\frac{d(\varphi^{-1}(\xi),\varphi^{-1}(\eta))^2}{2t} } 
	f(\varphi^{-1}(\eta)) ((\varphi^{-1})^{*}dV_{g})(\eta) \\
	&\geq 
	C  \int_{ Q_{a} } 
	t^{-\frac{N}{2}} e^{-\frac{d(\varphi^{-1}(\xi),\varphi^{-1}(\eta))^2}{2t} } 
	f(\varphi^{-1}(\eta)) d\eta. 
\end{aligned}
\]
By \eqref{prep6}, we see that 
$d(\varphi^{-1}(\xi),\varphi^{-1}(\eta)) \leq 2 |\xi-\eta|$
for any $\xi,\eta\in Q_{a}$. 
Recall that $P=(\rho_1,\ldots,\rho_N)\in Q_a$ 
and $\xi=\varphi(x)\in Q_a$ for $x\in Q_a'$. 
Then $\xi_i + \rho_i <2a$ for $1\leq i\leq N$, and so 
\[
	Q_{2a} \supset D_P(\xi) \quad 
	\mbox{ for any }P\in Q_a 
	\mbox{ and }\xi\in Q_a. 
\]
Therefore, since $f=0$ a.e. in $M\setminus \overline{Q_a'}$, 
we see that 
\[
\begin{aligned}
	\cL[f](x,t) &\geq 
	C |P|^{-N}
	\int_{ Q_{a} }  e^{-\frac{2|\xi-\eta|^2}{t} }  
	f(\varphi^{-1}(\eta)) d\eta  \\
	&=
	C |P|^{-N}
	\int_{ Q_{2a} }  e^{-\frac{2|\xi-\eta|^2}{t} }  
	f(\varphi^{-1}(\eta)) d\eta  
	\geq 
	C |P|^{-N} 
	\int_{ D_P(\xi) }  e^{-\frac{2|\xi-\eta|^2}{t} }  
	f(\varphi^{-1}(\eta)) d\eta
\end{aligned}
\]
for $x\in Q_a'$ and $t\in (|P|^2/2,|P|^2)\subset I_0$. 
By 
\[
\begin{aligned}
	\frac{2|\xi-\eta|^2}{t}
	=
	\frac{2}{t}\left( 
	(\xi_1-\eta_1)^2 + \dots + (\xi_N-\eta_N)^2
	\right) \leq 
	\frac{4}{|P|^2}\left( \rho_1^2 + \dots + \rho_N^2 \right) 
	= 4 
\end{aligned}
\]
for $\eta\in D_P(\xi)$ and $t\in (|P|^2/2,|P|^2)$, we have 
\[
	\cL[f](x,t) 
	\geq 
	C |P|^{-N}  
	\int_{ D_P(\xi) }  f(\varphi^{-1}(\eta)) d\eta 
\]
for $x\in Q_a'$ and $t\in (|P|^2/2,|P|^2)$. 
Integrating over $t\in I_0$ and using 
$(|P|^2/2,|P|^2)\subset I_0$ yield 
\[
\begin{aligned}
	\|\cL[f](x,\,\cdot\,)\|_{L^p(I_0)} &\geq
	\left( \int_{|P|^2/2}^{|P|^2} \left( 
	C |P|^{-N} \int_{ D_P(\xi) }  f(\varphi^{-1}(\eta)) d\eta
	\right)^p dt \right)^\frac{1}{p}  \\
	&= 
	C |P|^{-N} \int_{ D_P(\xi) }  f(\varphi^{-1}(\eta)) d\eta
	\left( \int_{|P|^2/2}^{|P|^2} dt\right)^\frac{1}{p} \\
	&=
	2^{-\frac{1}{p}} C |P|^{-N(1-\frac{2}{Np})}  
	\int_{ D_P(\varphi(x)) }  f(\varphi^{-1}(\eta)) d\eta
\end{aligned}
\]
for $x\in Q_a'$. 
Then by $P=(\rho_1,\ldots,\rho_N)\in Q_a$, 
$x\in Q_a'$ and the definition of $\cH$, 
we obtain \eqref{eq:MgLgesker}.

Taking the $L^p(Q_a')$-norm in \eqref{eq:MgLgesker} gives 
\[
	\|\cH[f]\|_{L^p(Q_a')}
	\leq 
	C \left( \int_{Q_a'}  \|\cL[f](x,\,\cdot\,)\|_{L^p(I_0)}^p 
	dV_{g}(x) \right)^\frac{1}{p} 
	= C\|\cL[f]\|_{L^p(Q_a'\times I_0)}. 
\]
This together with \eqref{eq:Lgbddg} shows that 
$\|\cH[f]\|_{L^p(Q_a')} \leq C \|f\|_Y$. 
By $Q_a'=\varphi^{-1}(Q_a)$, we obtain 
\[
	\|\cH[f]\|_{L^p(\varphi^{-1}(Q_a))}
	\leq C \|f\|_Y
	\quad \mbox{ for any }
	f\in \tilde Y \mbox{ with }f\geq0. 
\]
Define $f_n$ by \eqref{eq:gndefchiIn}. 
By definition, we have $f_n\geq0$ and 
$\supp f_n \subset Q_a'$. 
The upper estimate \eqref{eq:gnYup} gives $f_n\in Y$. 
Thus, $f_n\in \tilde Y$  with $f_n\geq0$.
However, $f_n$ satisfies \eqref{eq:Mgnunbdd}, 
a contradiction. 
\end{proof}

By Lemma \ref{lem:g_1Iunit} and a scaling argument, 
we prove the following:

\begin{lemma}\label{lem:gTI0T}
Let $0<T<a^2$. 
Then there exists a nonnegative function $f_T\in Y$ 
such that $f_T=0$ a.e. in $M\setminus \overline{Q_{\sqrt{T}}'}$
and $\|\cL[f_T]\|_{L^p(Q_{\sqrt{T}}'\times (0,bT))}=\infty$, where 
$b:=9/a^2$.
\end{lemma}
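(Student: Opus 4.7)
The plan is to take $f_T$ to be a parabolically rescaled pullback of $\tilde f$ along the exponential chart $\varphi$, and then to transfer the two required properties via a change of variables combined with the heat kernel estimates \eqref{prep9}, \eqref{prep10} and the distance comparison \eqref{prep6}, \eqref{prep7}. Set $\lambda:=a/\sqrt T>1$ (so that $\lambda\varphi(Q_{\sqrt T}')=Q_a$) and define
\[
f_T(x):=\tilde f(\varphi^{-1}(\lambda\varphi(x)))\quad\text{for }x\in Q_{\sqrt T}',\qquad f_T\equiv 0\text{ elsewhere},
\]
so that $f_T\geq 0$ and $f_T=0$ a.e.\ on $M\setminus\overline{Q_{\sqrt T}'}$.

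To show $f_T\in Y$, I would fix $x\in M$ and $0<\rho<\rho_\infty$, put $x^*:=\varphi^{-1}(\lambda\varphi(x))$, and apply \eqref{prep6}--\eqref{prep7} together with the volume comparison \eqref{prep2}, \eqref{prep4} to get
\[
\int_{B(x,\rho)} f_T\, dV_g \;\leq\; C\lambda^{-N}\!\int_{B(x^*,4\lambda\rho)} \tilde f\, dV_g \;\leq\; C\lambda^{-N}\|\tilde f\|_Y(4\lambda\rho)^{N-\frac{2}{p-1}}\phi(4\lambda\rho).
\]
Dividing by $\phi(\rho)\rho^{N-\frac{2}{p-1}}$ reduces the $Y$-bound to the boundedness of $\phi(4\lambda\rho)/\phi(\rho)$ on $(0,\rho_\infty)$, which is trivial for $p>p_F$ (then $\phi\equiv 1$) and in the critical case follows from the fact that the ratio of two $\log$-powers tends to $1$ at both ends of the range and is therefore globally bounded by a constant depending only on $T$ and $a$. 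The regime $4\lambda\rho\geq\rho_\infty$ is handled separately by using that $\tilde f$ is supported in $\overline{Q_a'}$.

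For the $L^p$-blow-up of $\cL[f_T]$, the crux is a pointwise inequality
\[
\cL[f_T](x,t)\;\geq\; c\,\cL[\tilde f]\!\left(x^*,\tfrac{s}{36}\right),\qquad s:=\lambda^2 t,
\]
for $x\in Q_{\sqrt T}'$ and $t\in(0,bT)$. To derive it, apply \eqref{prep10} inside the defining integral for $\cL[f_T](x,t)$, convert $d(x,y)$ to $2|\varphi(x)-\varphi(y)|$ via \eqref{prep6}, change variable $\eta\mapsto\lambda\eta$ in the $\varphi$-chart so that the Jacobian absorbs the rescaling and produces $s^{-N/2}\exp(-2|\xi'-\eta'|^2/s)$ with $\xi':=\lambda\varphi(x)=\varphi(x^*)$, and finally dominate this Euclidean Gaussian from below by $K(x^*,\cdot,s/36)$ using \eqref{prep9} together with the reverse distance estimate. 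The factor $4+\tfrac12=\tfrac92$ in \eqref{prep9} is exactly what fixes $b=9/a^2$: one needs $s/36\in(0,1/4)$ uniformly in $t\in(0,bT)$, which forces $bT\leq 9$. Integrating this pointwise bound over $Q_{\sqrt T}'\times(0,bT)$ and performing the parabolic change of variables $(x,t)\mapsto(x^*,s/36)$ reduces matters to showing $\|\cL[\tilde f]\|_{L^p(Q_a'\times(0,1/4))}=\infty$; this follows from Lemma~\ref{lem:g_1Iunit} together with the observation that $\cL[\tilde f]$ is uniformly bounded on $Q_a'\times(1/4,1)$ by Lemma~\ref{lem:linheat} applied to the measure $\tilde f\,dV_g\in Y$, so any blow-up of the $L^p$-norm over $Q_a'\times(0,1)$ must be concentrated on $(0,1/4)$.

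The main obstacle is executing this pointwise comparison cleanly: one has to juggle both sides of the heat kernel bounds \eqref{prep9}, \eqref{prep10} against both sides of the distance comparison \eqref{prep6}--\eqref{prep7}, match all constants so that the Euclidean Gaussian extracted from the lower bound at time $t$ actually majorizes $K(x^*,\cdot,s/36)$, and simultaneously verify that $t\in(0,bT)$ and $s/36\in(0,1/4)$ both lie within the range of validity of these heat kernel estimates; the explicit constant $b=9/a^2$ is then the tightest choice compatible with all of these constraints.
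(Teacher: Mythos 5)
Your approach is essentially the one in the paper: the rescaled function $f_T(x)=\tilde f(\varphi^{-1}(\lambda^{-1}\varphi(x)))$ (with $\lambda=\sqrt T/a$; your $\lambda$ is the reciprocal, but the function is the same), followed by a change of variables in the $\varphi$-chart and a scaling inequality for the heat kernel, reducing the divergence of $\|\cL[f_T]\|_{L^p}$ to Lemma~\ref{lem:g_1Iunit}. The paper packages the heat kernel scaling as \eqref{prep11}, whereas you rederive it from \eqref{prep9}, \eqref{prep10}, \eqref{prep6}, \eqref{prep7}. Your more conservative bookkeeping produces the time scale $s/36$ rather than the paper's $\beta t/(9\alpha^2)$, so after the change of variables you land on $Q_a'\times(0,1/4)$ instead of $Q_a'\times(0,1)$, which forces the extra step of deducing $\|\cL[\tilde f]\|_{L^p(Q_a'\times(0,1/4))}=\infty$ from blow-up on $(0,1)$ plus finiteness on $(1/4,1)$. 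That extra step is sound (and in fact is a sensible precaution: the two heat-kernel/distance comparisons each lose a factor that \eqref{prep11} does not appear to account for), but it is not present in the paper's proof precisely because \eqref{prep11} is stated with the constant $9$ that makes the scaled time range exactly $(0,1)$.

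Two small inaccuracies in your write-up are worth flagging, though neither is fatal. First, in the verification $f_T\in Y$ you assert that $\phi\equiv 1$ when $p>p_F$; this is not the case --- the weight $\phi(\rho)=(\log(e+\rho^{-1}))^{-1/(p-1)}$ in \eqref{eq:YfYdefsu} is the same log-power for all $p\geq p_F$. Fortunately the log-ratio argument you give for the critical case shows $\phi(4\lambda\rho)/\phi(\rho)$ is bounded on $(0,\rho_\infty)$ for every $p\geq p_F$, so the conclusion stands; just drop the special-case remark. Second, to show $\cL[\tilde f]$ is bounded on $Q_a'\times(1/4,1)$, you cannot in general invoke Lemma~\ref{lem:linheat}, because that lemma is only valid for $t\in(0,16\rho_\infty^2)$ and nothing prevents $\rho_\infty^2\leq 1/16$. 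Instead note that $\tilde f\in Y$ has compact support, hence $\tilde f\in L^1(M)$, and $K$ is continuous (hence bounded) on the compact set $\overline{Q_a'}\times\overline{Q_a'}\times[1/4,1]$; this gives the uniform bound directly, and the finite measure of $Q_a'\times(1/4,1)$ then gives finiteness of the $L^p$-norm there.
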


\begin{proof}
We write $\lambda:=\sqrt{T}/a$ in this proof. 
By using $\tilde f$ in Lemma \ref{lem:g_1Iunit}, we set 
\[
	f_T(x):= 
	\left\{ 
	\begin{aligned}
	&\tilde f\left( \varphi^{-1} 
	\left( \lambda^{-1} \varphi(x) \right) \right) 
	&&\mbox{ for }x\in Q_{\sqrt{T}}', \\
	&0 
	&&\mbox{ for }x\in M\setminus Q_{\sqrt{T}}'. 
	\end{aligned}
	\right.
\]
Remark that $\varphi^{-1}( \lambda^{-1} \varphi(x) ) \in Q_a'$ 
for $x\in Q_{\sqrt{T}}'$. 
Then the properties of $\tilde f$ imply that 
$f_T\geq0$, $f_T\in Y$ and 
$f_T=0$ a.e. in $M\setminus \overline{Q_{\sqrt{T}}'}$. 
For simplicity of notation, 
we write $A_T:=\|\cL[f_T]\|_{L^p(Q_{\sqrt{T}}'\times (0,bT))}^p$ 
in this proof. 
In what follows, we compute 
\[
	A_T
	=
	\int_0^{bT} \int_{Q_{\sqrt{T}}'} 
	\left| \int_{M} K(x,y,t) f_T(y) dV_{g}(y) \right|^p
	dV_{g}(x) dt. 
\]

By the change of variables and \eqref{eq:invvol}, we have 
\[
\begin{aligned}
	\int_{M} K(x,y,t) f_T(y) dV_{g}(y) 
	&= 
	\int_{Q_{\sqrt{T}}'} K(x,y,t) 
	\tilde f\left( \varphi^{-1} \left( \lambda^{-1}\varphi(y)\right) \right) 
	dV_{g}(y)  \\
	&=
	\int_{Q_{\sqrt{T}}} K(x,\varphi^{-1}(\eta),t) 
	\tilde f\left( \varphi^{-1} \left( \lambda^{-1} \eta \right) \right) 
	\sqrt{\det(g_{ij}(\eta))} d\eta \\
	&=
	\int_{Q_a} 
	K\left( x,\varphi^{-1}\left( \lambda \tilde\eta \right),t \right) 
	\tilde f(\varphi^{-1}(\tilde\eta))
	\lambda^N  
	\sqrt{\det\left( g_{ij}\left( \lambda \tilde \eta \right) \right)}
	d\tilde \eta
\end{aligned}
\]
for $(x,t)\in Q_{\sqrt{T}}'\times (0,bT)$. 
Since $|\lambda\tilde\eta|\leq|\tilde\eta|< \sqrt{N}a\leq\rho_{\infty}$
for $\tilde\eta\in Q_a$, 
the volume comparison theorem \eqref{prep4} implies 
the existence of a constant $C>0$ independent of $T$ such that 
\[
	\int_{M} K(x,y,t) f_T(y) dV_{g}(y) 
	\geq  
	C \lambda^N 
	\int_{Q_{a}} K\left( x,\varphi^{-1}\left( \lambda\tilde\eta \right),t \right) 
	\tilde f(\varphi^{-1}(\tilde\eta)) d\tilde\eta
\]
for $(x,t)\in Q_{\sqrt{T}}'\times (0,bT)$. Thus, \eqref{eq:invvol} gives 
\[
\begin{aligned}
	A_T&\geq 
	C \lambda^{Np} 
	\int_0^{bT} \int_{Q_{\sqrt{T}}'} 
	\left| 
	\int_{Q_{a}} K\left( x,\varphi^{-1}\left( \lambda \tilde\eta \right),t \right) 
	\tilde f(\varphi^{-1}(\tilde\eta)) d\tilde\eta  \right|^p
	dV_{g}(x) dt   \\
	&=
	C\lambda^{Np} 
	\int_0^{bT} \int_{Q_{\sqrt{T}}} 
	\left| 
	\int_{Q_{a}} K\left( \varphi^{-1}(\xi), 
	\varphi^{-1}\left( \lambda \tilde\eta \right), t \right) 
	\tilde f(\varphi^{-1}(\tilde\eta)) d\tilde\eta  \right|^p
	\sqrt{\det(g_{ij}(\xi))} d\xi dt. 
\end{aligned}
\]
By the change of variables and the volume comparison theorem 
\eqref{prep4} again, 
we see that 
\[
\begin{aligned}
	A_T &\geq 
	C\lambda^{Np}
	\int_0^{bT} \int_{Q_a} 
	\left| 
	\int_{Q_{a}} K\left(
	\varphi^{-1}\left( \lambda \tilde\xi \right), 
	\varphi^{-1}\left( \lambda \tilde\eta \right), 
	t \right) 
	\tilde f(\varphi^{-1}(\tilde\eta)) d\tilde\eta  \right|^p 
	\lambda ^N
	\sqrt{\det(g_{ij}(\lambda \tilde \xi))} d\tilde \xi  dt \\
	&\geq
	C \lambda^{N(p+1)} T 
	\int_0^{b} \int_{Q_{a}} 
	\left| 
	\int_{Q_{a}} K\left(
	\varphi^{-1}\left( \lambda \xi \right), 
	\varphi^{-1}\left( \lambda \eta \right), 
	T t \right) 
	\tilde f(\varphi^{-1}(\eta))   d\eta \right|^p 
	d \xi  dt. 
\end{aligned}
\]

We estimate the heat kernel in the integrand. 
By $\lambda=\sqrt{T}/a<1$, we can apply 
\eqref{prep11} with $\alpha=\lambda$ and $\beta=T$, 
where we remark that $\beta/(9\alpha^2)=1/b$. 
Then there exists $C>0$ depending 
only on $N$, $\kappa$, $\inj(M)$, $a$ and $T$ such that 
\[
K\left(
	\varphi^{-1}\left( \lambda \xi \right), 
	\varphi^{-1}\left( \lambda \eta \right), 
	T t \right) \geq C K\left(\varphi^{-1}(\xi),\varphi^{-1}(\eta),b^{-1} T\right)
\]
for $0<\sqrt{t}<\min\{\inj(M),\pi/(4\sqrt{\kappa}),\sqrt{b T}\}$. 
This together with the volume comparison theorem \eqref{prep2} yields 
\[
\begin{aligned}
	A_T&\geq 
	C \int_0^b \int_{Q_a} 
	\left| 
	\int_{Q_a} 
	K(\varphi^{-1}(\xi),\varphi^{-1}(\eta),b^{-1} t)
	\tilde f(\varphi^{-1}(\eta))  d\eta  \right|^p  
	d \xi dt \\
	&=
	C\int_0^{1} \int_{Q_a} 
	\left| 
	\int_{Q_a} 
	K(\varphi^{-1}(\xi),\varphi^{-1}(\eta),t')
	\tilde f(\varphi^{-1}(\eta)) d\eta \right|^p  
	d \xi dt'\\
	&\geq
	C\int_0^{1} \int_{Q_a} 
	\left| 
	\int_{Q_a} 
	K(\varphi^{-1}(\xi),\varphi^{-1}(\eta),t')  
	\tilde f(\varphi^{-1}(\eta)) \sqrt{\det(g_{ij}(\eta))}d\eta \right|^p  
	\sqrt{\det(g_{ij}(\xi))}d \xi dt'\\
	&=
	C \int_0^{1} \int_{Q_a'} 
	\left| \int_{Q_a'} K(x,y,t) \tilde f(y) dV_{g}(y)  \right|^p  
	dV_{g}(x)  dt. 
\end{aligned}
\]
Recall that 
$\tilde f=0$ a.e. in $M \setminus \overline{Q_a'}$ 
and $\|\cL[\tilde f]\|_{L^p(Q_a'\times I_0)}=\infty$ 
by Lemma \ref{lem:g_1Iunit}. 
Then, 
\[
\begin{aligned}
	A_T
	&\geq 
	C\int_0^{1} \int_{Q_a'} 
	\left| \int_{M} K(x,y,t) \tilde f(y) dV_{g}(y) \right|^p  
	dV_{g}(x) dt \\
	&=
	C\|\cL[\tilde f]\|_{L^p(Q_a'\times I_0)}^p  =\infty, 
\end{aligned}
\]
and so $\|\cL[f_T]\|_{L^p(Q_{\sqrt{T}}'\times (0,bT))}=A_T^{1/p}=\infty$. 
\end{proof}

Lemma \ref{lem:gTI0T} shows the existence of a specific initial data $f_T$
depending on the choice of $T$. 
By using this lemma, we construct 
the desired initial data $f_0$ independent of $T$.

\begin{lemma}\label{lem:speinig0Y}
There exists a nonnegative function $f_0\in Y$ such that 
$\|f_0\|_Y\leq 1$, 
$f_0=0$ a.e. in $M \setminus \overline{Q_a'}$
and 
$\|\cL[f_0]\|_{L^p(Q_{\sqrt{T}}'\times (0,bT))}=\infty$ 
for all $0<T<a^2$. 
\end{lemma}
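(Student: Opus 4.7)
The plan is to build $f_0$ as a weighted infinite sum of the functions produced by Lemma \ref{lem:gTI0T} at a sequence of times decreasing to $0$, and to exploit positivity of the heat kernel so that the blow-up at every scale is inherited. Concretely, I would fix a strictly decreasing sequence $\{T_k\}_{k=1}^\infty \subset (0,a^2)$ with $T_k \to 0$, for instance $T_k := a^2/2^{k+1}$. Applying Lemma \ref{lem:gTI0T} to each $T_k$ yields a nonnegative $f_{T_k} \in Y$ with $\supp f_{T_k} \subset \overline{Q_{\sqrt{T_k}}'} \subset \overline{Q_a'}$ and $\|\mathcal{L}[f_{T_k}]\|_{L^p(Q_{\sqrt{T_k}}' \times (0,bT_k))} = \infty$. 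Since both $\|\cdot\|_Y$ and $\|\mathcal{L}[\cdot]\|_{L^p}$ are positively homogeneous, dividing by $\|f_{T_k}\|_Y$ (which is positive and finite) we may assume $\|f_{T_k}\|_Y \leq 1$ for every $k$ without losing the blow-up.

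I would then set
\[
f_0(x) := \sum_{k=1}^\infty 2^{-k} f_{T_k}(x),
\]
which is a well-defined nonnegative measurable function (monotone partial sums). The triangle inequality for $\|\cdot\|_Y$ together with the normalization gives
\[
\|f_0\|_Y \leq \sum_{k=1}^\infty 2^{-k} \|f_{T_k}\|_Y \leq 1,
\]
so in particular $f_0 \in L^1_\loc(M) \cap Y$. Since each summand vanishes outside $\overline{Q_a'}$, so does $f_0$.

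For the blow-up claim, fix any $T \in (0,a^2)$ and choose $k_0$ large enough that $T_{k_0} \leq T$. Then $\sqrt{T_{k_0}} \leq \sqrt{T}$ yields the inclusion
\[
Q_{\sqrt{T_{k_0}}}' \times (0,bT_{k_0}) \subset Q_{\sqrt{T}}' \times (0,bT).
\]
Because the heat kernel is strictly positive and $f_0 \geq 2^{-k_0} f_{T_{k_0}}$ pointwise, monotonicity of the integral gives $\mathcal{L}[f_0] \geq 2^{-k_0} \mathcal{L}[f_{T_{k_0}}]$ everywhere. Restricting the $L^p$ norm over the larger cylinder to the smaller one and using this pointwise lower bound yields
\[
\|\mathcal{L}[f_0]\|_{L^p(Q_{\sqrt{T}}' \times (0,bT))} \geq 2^{-k_0} \|\mathcal{L}[f_{T_{k_0}}]\|_{L^p(Q_{\sqrt{T_{k_0}}}' \times (0,bT_{k_0}))} = \infty,
\]
completing the argument.

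The whole construction is rather soft once Lemma \ref{lem:gTI0T} is in hand; the only subtlety is the observation that the normalization $\|f_{T_k}\|_Y \leq 1$ does not destroy the infinite-norm property of $\mathcal{L}[f_{T_k}]$, and that choosing coefficients $2^{-k}$ (as opposed to something more delicate) suffices because all we need is pointwise lower bounds for countably many $k$. I do not anticipate a genuine obstacle; the trickiest bookkeeping is simply keeping straight that the singular set shrinks with $T$, so one must pass to a smaller, not a larger, cylinder when invoking the blow-up at scale $T_{k_0}$.
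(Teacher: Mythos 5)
Your proposal is correct and follows essentially the same route as the paper's proof: build $f_0$ as an $\ell^1$-weighted sum of the normalized functions $f_T/\|f_T\|_Y$ produced by Lemma \ref{lem:gTI0T} at a countable collection of times in $(0,a^2)$ accumulating at $0$, then for each fixed $T$ use pointwise positivity, $f_0 \geq 2^{-k_0}\|f_{T_{k_0}}\|_Y^{-1} f_{T_{k_0}}$, together with the inclusion $Q_{\sqrt{T_{k_0}}}'\times(0,bT_{k_0})\subset Q_{\sqrt{T}}'\times(0,bT)$, to transfer the infinite $L^p$ norm. The only cosmetic difference is that you fix the geometric sequence $T_k=a^2/2^{k+1}$ whereas the paper enumerates the rationals in $(0,a^2)$; both choices supply, for every $T$, a term with index small enough to fit inside the target cylinder, which is all that is needed.
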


\begin{proof}
The proof is the same as in \cite[Theorem 2.4]{KT17} 
and \cite[Proposition 1]{Ta16}. 
We give an outline. 
Let $\{q_j\}_{j=1}^\infty$ be the all rational numbers in $(0,a^2)$. 
By Lemma \ref{lem:gTI0T}, for each $j$, there exists 
$0\leq f_{\sqrt{q_j}}\in Y$ 
such that 
$f_{\sqrt{q_j}}=0$ a.e. in $M \setminus \overline{Q_{\sqrt{q_j}}'}$ and 
$\|\cL[f_{\sqrt{q_j}}]\|_{L^p(Q_{\sqrt{q_j}}'\times (0,b q_j))}=\infty$. 
Set 
\[
	f_0(x):= \sum_{j=1}^\infty 2^{-j} 
	\frac{f_{\sqrt{q_j}}(x)}{\|f_{\sqrt{q_j}}\|_Y}. 
\]
We can see that $f_0\geq0$, $f_0\in Y$, $\|f_0\|_Y \leq 1$ and 
$f_0=0$ a.e. in $M \setminus \overline{Q_a'}$. 
For $0<T<a^2$, there exists $j'$ such that 
$Q_{\sqrt{q_{j'}}}'\times (0,bq_{j'})\subset Q_{\sqrt{T}}'\times (0,bT)$. 
Then $f_0(x)\geq  2^{-j'} \|f_{\sqrt{q_{j'}}}\|_Y^{-1} f_{\sqrt{q_{j'}}}(x)$ and 
\[
	\|\cL[f_0]\|_{L^p(Q_{\sqrt{T}}'\times (0,bT))} 
	\geq 
	2^{-j'} \|f_{\sqrt{q_{j'}}}\|_Y^{-1} 
	\|\cL[f_{\sqrt{q_{j'}}}]\|_{L^p(Q_{\sqrt{q_{j'}}}'\times (0,bq_{j'}))} =\infty. 
\]
Hence $f_0$ is the desired function. 
\end{proof}

\subsection{Completion of proof}

We are now in a position to prove Theorem \ref{th:nex}.

\begin{proof}[Proof of Theorem \ref{th:nex}]
Fix $c>0$. Set $u_0:= c f_0$, where 
$f_0$ is given by Lemma \ref{lem:speinig0Y}. 
We define a nonnegative Radon measure $\mu_0$ 
by $\mu_0(A):=\int_A u_0(x) dV_g(x)$. 
Then by $u_0\geq0$, $\|u_0\|_Y\leq c$ and the definition 
of $Y$ in \eqref{eq:YfYdefsu}, we have 
\[
\begin{aligned}
	\sup_{z\in M} \mu_0( B(z,\rho) ) 
	&=
	\sup_{z\in M} \int_{B(z,\rho)} u_0(y) dV_{g}(y)  \\
	&\leq c \rho^{N-\frac{2}{p-1}} 
	( \log(e+\rho^{-1}) )^{-\frac{1}{p-1}}
	\quad 
	\mbox{ for any }0<\rho< \rho_{\infty}. 
\end{aligned}
\]
Hence $\mu_0$ satisfies the growth condition in Theorem \ref{th:nex}. 
It suffices to prove that the problem \eqref{eq:main} with $u(\,\cdot\,,0)=\mu_0$ 
does not admit any solutions in $M\times [0,T)$ for all $T>0$. 

To obtain a contradiction, suppose that 
there exists $T>0$ such that 
\eqref{eq:main} with $u(\,\cdot\,,0)=\mu_0$ 
admits a solution $u$ in $M\times [0,T)$. 
Without loss of generality, 
we assume $0<T<a^2$. 
By Lemma \ref{lem:speinig0Y}, we have 
$\|\cL[u_0]\|_{L^p(Q_{\sqrt{T}}'\times (0,bT))}=\infty$. 
Thus, we will reach a contradiction 
once we obtain 
\begin{equation}\label{eq:contLpu0}
	\|\cL[u_0]\|_{L^p(Q_{\sqrt{T}}'\times (0,bT))}<\infty. 
\end{equation}

Let us prove \eqref{eq:contLpu0}. 
The same computations as \eqref{dx0x} give 
$d(z_0,x)\leq \sqrt{N}a$ for $x\in Q_a'$. 
Then by $Q_{\sqrt{T}}'\subset Q_a'$, we observe that 
$\supp u_0\subset Q_a'\subset B(z_0,\sqrt{N}a)$. 
Based on this observation, we choose 
$\psi\in C^\infty_0(M)$ such that 
$0\leq \psi\leq1$, $\psi=1$ in $B(z_0,\sqrt{N}a)$, 
$\psi=0$ in $M\setminus \overline{B(z_0,2\sqrt{N}a)}$. 
Let $0<\eps<bT$. 
Integrating the equation \eqref{eq:fujita} over $M\times(\eps,bT)$ 
and integrating by parts, we see that 
\[
\begin{aligned}
	\int_\eps^{bT} \int_{M} u^p \psi dV_{g} dt
	&= 
	\int_\eps^{bT} \int_{M} (u_t-\Delta u) \psi dV_{g} dt  \\
	&= 
	\int_{M} (u(x,bT)-u(x,\eps) ) \psi dV_{g} 
	- \int_\eps^{bT} \int_{M} u \Delta \psi dV_{g} dt  \\
	&\leq 
	\int_{M} u(x,bT) \psi dV_{g} 
	+ \int_\eps^{bT} \int_{M} u |\Delta \psi| dV_{g} dt. 
\end{aligned}
\]
By the same argument as in the proof of Theorem \ref{th:necsha} 
(proof of \eqref{eq:ini}), 
we see that 
\[
	\int_{M} u(x,t) \tilde \psi(x) dV_{g}(x)
	\to 
	\int_{M} u_0(x) \tilde \psi(x) dV_{g}(x) 
\]
for any $\tilde \psi\in C_0(M)$ as $t\to0$. 
Then there exists $0<t_0<bT$ such that 
\[
	\int_{M} |u(x,t)-u_0(x)| |\Delta \psi(x)| dV_{g}(x)
	\leq 1
	\quad \mbox{ for }0<t<t_0. 
\]
From this, it follows that 
\[
	\int_{M} u(x,t) |\Delta \psi(x)| dV_{g}(x)
	\leq 1 + \int_{M} u_0(x) |\Delta \psi(x)| dV_{g}(x) \leq C
\]
for $0<t<t_0$, where $C>0$ is a constant independent of $\eps$. 
Thus, 
\[
	\int_\eps^{bT} \int_{M} u |\Delta \psi| dV_{g} dt
	\leq 
	\int_0^{t_0} \int_{M} u |\Delta \psi| dV_{g} dt
	+ \int_{t_0}^{bT} \int_{M} u |\Delta \psi| dV_{g} dt 
	\leq C, 
\]
and so
\[
	\int_\eps^{bT} \int_{M} u^p \psi dV_{g} dt
	\leq 
	\int_{M} u(x,bT) \psi dV_{g} + C. 
\]
Since the right-hand side is independent of $\eps$ and $u^p \psi\geq 0$, 
we obtain 
\[
	\int_0^{bT} \int_{M} u^p \psi dV_{g} dt \leq 
	\int_{M} u(x,bT) \psi dV_{g} + C < \infty. 
\]
In particular, $u\in L^p(B(z_0,\sqrt{N}a)\times (0,bT))$.

Let $x\in M$, $0<t<bT$ and $0<\tau<bT-t$. 
Again by the same argument as in the proof of Theorem \ref{th:necsha} 
(proof of \eqref{eq:uint}), 
$u$ satisfies 
\[
	u(x,t+\tau) = 
	\int_{M} K(x,y,t) u(y,\tau) dV_{g}(y) 
	+ \int_\tau^{t+\tau} \int_{M} K(x,y,t+\tau-s) u(y,s)^p dV_{g}(y) ds. 
\]
Then by $K, u\geq 0$ and $\psi\leq 1$, we have 
\[
	u(x,t+\tau) \geq \int_{M} K(x,y,t)\psi(y) u(y,\tau) dV_{g}(y). 
\]
From $K(x,\,\cdot\,,t)\psi(\,\cdot\,) \in C_0(M)$ 
and letting $\tau\to0$, 
it follows that 
\[
	u(x,t) 
	\geq  \int_{M} K(x,y,t) \psi(y) d\mu_0(y)
	= \int_{M} K(x,y,t) \psi(y) u_0(y) dV_{g}(y). 
\]
Since $\psi(y)=1$ for $y\in \supp u_0\subset B(z_0,\sqrt{N}a)$, 
we obtain 
\[
	u(x,t) \geq \int_{M} K(x,y,t) u_0(y) dV_{g}(y)
	= \cL[u_0](x,t). 
\]
Integrating this over $Q_{\sqrt{T}}'\times (0,bT)$ yields 
\[
	\|\cL[u_0]\|_{L^p(Q_{\sqrt{T}}'\times (0,bT))}
	\leq \|u\|_{L^p(Q_{\sqrt{T}}'\times (0,bT))}. 
\]
Recall that $Q_{\sqrt{T}}' \subset B(z_0,\sqrt{N}a)$
and $u\in L^p(B(z_0,\sqrt{N}a)\times (0,bT))$. 
Therefore, 
\[
	\|\cL[u_0]\|_{L^p(Q_{\sqrt{T}}'\times (0,bT))}
	\leq 
	\|u\|_{L^p(B(z_0,\sqrt{N}a)\times (0,bT))} <\infty. 
\]
We obtain \eqref{eq:contLpu0}. The proof is complete. 
\end{proof}

\section{Proofs of corollaries}\label{sec:poc}
We give proofs of Corollaries \ref{cor:ex}, \ref{cor:singnex}, 
\ref{cor:nex} and \ref{cor:singex}. 
The existence part is a corollary of the proof of Theorem \ref{th:necshacri} and 
the nonexistence part is a corollary of Theorem \ref{th:nec}. 

\begin{proof}[Proof of Corollary \ref{cor:ex}]
We first assume (i). 
Let $1<\alpha<q$. 
For a nonnegative function $u_0\in L^q_{\uloc, \tilde \rho}(M)$ 
with $\tilde \rho\in(0,\rho_T]$, set 
\[
	\overline{u}(x,t):= 2U(x,t), \qquad 
	U(x,t):= \left( \int_M K(x,y,t) u_0(y)^\alpha dV_g(y) \right)^\frac{1}{\alpha}, 
\]
as in \eqref{eq:super}. 
Then by $\int_M K(x,y,t) u_0(y) dV_g(y) \leq U(x,t)$, 
\eqref{eq:ouesti} and \eqref{eq:ouestij}, we have 
\[
\begin{aligned}
	\Psi[\overline{u}] &\leq 
	U + 2^p U(x,t)
	\left\| U(\,\cdot\,,t)^{\alpha-1} \right\|_\infty 
	\int_0^t \left\| U(\,\cdot\,,s)^{p-\alpha} \right\|_\infty ds. 
\end{aligned}
\]
Lemma \ref{lem:linheat} and the H\"older inequality show that, for $(x,t)\in M\times (0,\tilde \rho^2)$,  
\[
\begin{aligned}
	U(x,t) \leq 
	\left( C t^{-\frac{N}{2}} \sup_{z\in M} \int_{B(z,t^\frac{1}{2})} u_0^\alpha dV_g
	\right)^\frac{1}{\alpha} \leq 
	C t^{-\frac{N}{2\alpha}} \sup_{z\in M} 
	\left( \int_{B(z,t^\frac{1}{2})} u_0^q dV_g\right)^\frac{1}{q} 
	\Vol(B(z,t^\frac{1}{2}))^{\frac{1}{\alpha}-\frac{1}{q}}, 
\end{aligned}
\]
where $C>0$ depends only on $N$ and $\alpha$. 
Note that the volume comparison theorem \eqref{prep3} yields 
$\Vol(B(z,t^{1/2}))
\leq 2^{N-1}\Area(\bS^{N-1})N^{-1} t^{N/2}$, 
where we used $t^{1/2}<\tilde \rho\leq \rho_\infty$. 
Then by $\alpha<q$, we have 
$U \leq C t^{-N/(2q)}
\|u_0\|_{L_{\uloc,\tilde \rho}^q(M)} 
\leq C  c t^{-N/(2q)}$, 
and so 
\[
	\Psi[\overline{u}] \leq 
	U + C c t^{1-\frac{N(p-1)}{2q}} U 
	\leq 
	( 1 + C c \tilde \rho^{2-\frac{N(p-1)}{q}}) U
\]
for $0<t<\tilde \rho^2$. 
Hence $\overline{u}$ is a supersolution of \eqref{eq:integ} 
in $M\times[0,\tilde \rho^2)$
if $C c \tilde \rho^{2-N(p-1)/q}\leq 1$. 
The rest is the same as that of Theorem \ref{th:necsha}.

We next assume (ii). 
For a nonnegative function $u_0\in L^q_{\uloc,\tilde \rho}(M)$ 
with $\tilde \rho\in(0,\rho_T]$, 
again by the volume comparison theorem \eqref{prep3}, 
we have 
\begin{equation}\label{eq:hvol}
	\sup_{z\in M}\int_{B(z,\rho)} u_0 dV_g 
	\leq 
	\|u_0\|_{L_{\uloc,\rho}^q(M)} 
	\Vol(B(z,\rho))^{1-\frac{1}{q}}
	\leq 
	C \|u_0\|_{L_{\uloc,\tilde \rho}^q(M)} 
	\rho^{N(1-\frac{1}{q})}
\end{equation}
for any $0<\rho<\tilde \rho$, where 
$C>0$ is a constant depending only on $N$ and $q$. 
From $X^{N(1-1/q)}\leq C(\log(e+X^{-1}))^{-N/2-1}$ for $0<X<1$, 
it follows that 
\[
	\sup_{z\in M}\int_{B(z,\rho)} u_0 dV_g 
	\leq 
	C c \tilde \rho^{N(1-\frac{1}{q})} 
	(\log(e+\tilde \rho \rho^{-1}))^{-\frac{N}{2}-1}. 
\]
Define $\mu$ by $\mu(A):=\int_A u_0 dV_g$. 
Then $\mu$ satisfies \eqref{eq:suf} with $\eps=1$ 
provided that $C c \tilde \rho^{N(1-1/q)}$ is small enough.  
Therefore we can apply Theorem \ref{th:suf}.

Finally, we assume (iii). 
Define $\mu$ as above. 
By \eqref{eq:hvol}, 
$\sup_{z\in M}\mu(B(z,\tilde \rho)) \leq 
C c \tilde \rho^{N(1-1/q)}$, and so $\mu$ satisfies \eqref{eq:sha1} 
if $C c \tilde \rho^{N(1-1/q)}$ is small. 
Hence we can apply Theorem \ref{th:necsha}. 
\end{proof}

\begin{proof}[Proof of Corollary \ref{cor:singnex}]
Let $z_0\in M$ and 
let $u_0\in L^\infty_\loc(M\setminus\{z_0\})$ satisfy \eqref{eq:u0la}. 
Set $\mu(A):=\int_A u_0 dV_g$. 
By using Riemannian normal coordinates 
$(x_1,\ldots,x_N)$ centered at $z_0$, the volume comparison theorem \eqref{prep4} and 
$1\geq (e|x|+1)^{-1}$, we have 
\[
\begin{aligned}
	\mu(B(z_0,\rho)) &\geq 
	\left\{ 
	\begin{aligned}
	&c \int_{B(0,\rho)} |x|^{-\frac{2}{p-1}} \sqrt{\det(g_{ij}(x))} dx 
	&&(p>p_F)\\
	&c \int_{B(0,\rho)} |x|^{-N} (e|x|+1)^{-1}
	(\log(e+|x|^{-1}))^{-\frac{N}{2}-1} \sqrt{\det(g_{ij}(x))} dx 
	&&(p=p_F)
	\end{aligned}
	\right.  \\
	&\geq 
	\left\{ 
	\begin{aligned}
	&\tilde c c \rho^{N-\frac{2}{p-1}}  
	&&(p>p_F)\\
	&\tilde c c (\log(e+\rho^{-1}))^{-\frac{N}{2}}
	&&(p=p_F)
	\end{aligned}
	\right.
\end{aligned}
\]
for any $0<\rho<\rho_\infty$, 
where $\tilde c>0$ is a constant depending only on $N$ and $p$. 
We denote by $\tilde C$ the constant guaranteed by Theorem \ref{th:nec}, 
where $\tilde C$ depends only on $N$ and $p$. 

Let $c>\tilde C/\tilde c$. 
To obtain a contradiction, we suppose that 
there exists $T>0$ such that \eqref{eq:fun} has a solution on $M\times[0,T)$. 
Then by Theorem \ref{th:nec}, we have 
\[
\begin{aligned}
	\mu(B(z_0,\rho)) &\leq 
	\left\{ 
	\begin{aligned}
	&\tilde C \rho^{N-\frac{2}{p-1}}  &&\mbox{ if }p>p_F, \\
	&\tilde C (\log(e+\rho_T \rho^{-1}))^{-\frac{N}{2}} &&\mbox{ if }p=p_F, 
	\end{aligned}
	\right. 
\end{aligned}
\]
for any $0<\rho<\rho_T$. 
If $p>p_F$, then $c \leq \tilde C/\tilde c$. 
If $p=p_F$, since $\rho_T\leq \rho_\infty$, we have 
\[
	c 
	\leq (\tilde C/\tilde c)  \left( 
	\frac{\log(e+\rho^{-1})}{\log(e+\rho_T \rho^{-1})} \right)^{\frac{N}{2}}
	\quad \mbox{ for any }0<\rho<\rho_T. 
\]
Letting $\rho\to0$ gives $c\leq \tilde C/\tilde c$. 
Hence, we obtain $c\leq \tilde C/\tilde c$ for $p\geq p_{F}$, contrary to $c>\tilde C/\tilde c$. 
\end{proof}

\begin{proof}[Proof of Corollary \ref{cor:nex}]
Let $\tilde c$ and $\tilde C$ be as in the proof of Corollary \ref{cor:singnex}. 
Fix $c>\tilde C/\tilde c$ and $z_0\in M$. We set 
$u_{0}(x):=\tilde{u}_{0}(r(x))$ with $r(x):=d(z_0,z)$ and 
\[
	\tilde{u}_0(r) := \left\{ 
	\begin{aligned}
	& c r^{-\frac{2}{p-1}} 
	&&\mbox{ if }p>p_F, \\
	& c r^{-N} 
	( \log (e + r^{-1} )^{-\frac{N}{2}-1}
	&&\mbox{ if }p=p_F, 
	\end{aligned}
	\right.
\]
for $x\in B(z_0,\rho_\infty)$ and $u_0(x):=0$ 
for $x\in M\setminus B(z_0,\rho_\infty)$. 
Let $1\leq q<N(p-1)/2$ if $p>p_F$ and $q=1$ if $p=p_F$. 
Since $\tilde u_0$ is decreasing, 
we see that \eqref{ballcheck} with $f$ replaced by $\tilde u_0^{q}$ holds. 
Thus, 
\[
	\|u_0\|_{L_{\uloc,\rho_\infty}^q(M)}
	= \sup_{z\in M} \left( \int_{B(z,\rho_\infty)} u_0^q dV_g \right)^\frac{1}{q} 
	\leq 
	C(N)\left( \int_{0}^{\rho_\infty} r^{N-1}\tilde{u}_0^q(r) dr
	\right)^\frac{1}{q} 
	<\infty, 
\]
so that $u_0\in L_{\uloc,\rho_\infty}^q(M)$. 
On the other hand, by $c>\tilde C/\tilde c$, 
the proof of Corollary \ref{cor:singnex} shows that 
\eqref{eq:fun} does not admit any local-in-time solutions. 
Hence $u_0$ is the desired function. 
\end{proof}

\begin{proof}[Proof of Corollary \ref{cor:singex}]
Let $\eta\in C^\infty(\R)$ be as in Subsection \ref{subsec:cri}. 
Since $u_0\in L^\infty(M\setminus B(z_0,\rho_\infty))$ 
if $M\setminus B(z_0,\rho_\infty)\neq \emptyset$, 
we can assume that $u_0(x) \leq c f(x) + \tilde C$ for $x\in M$, 
where $\tilde C\geq 0$ 
is determined by $u_0$, and $f$ is given by 
\[
	f(x):= 
	\left\{ 
	\begin{aligned}
	& d(z_0,x)^{-\frac{2}{p-1}}\eta( \rho_\infty^{-1} d(z_0,x) ) 
	&&\mbox{ if }p>p_F, \\
	& d(z_0,x)^{-N} ( \log (e + d(z_0,x)^{-1} )^{-\frac{N}{2}-1}
	\eta( \rho_\infty^{-1} d(z_0,x)) 
	&&\mbox{ if }p=p_F. 
	\end{aligned}
	\right.
\]
Define $h$ by \eqref{eq:hdefff}. Set 
\[
	\overline{u}(x,t):= 2c U(x,t) + 2\tilde C, \qquad 
	U(x,t) := h^{-1} \left( \int_M K(x,y,t) h( f(y) )  dV_g(y) \right). 
\]
From $\int_M K(x,y,t) dV_g(y)=1$, it follows that 
\[
\begin{aligned}
	\Psi[\overline{u}] &\leq c U + \tilde C
	+ C c^p \int_0^t \int_M K(x,y,t-s) U(y,s)^p dV_g(y) ds 
	+ C \tilde C^p t \\
	&\leq 
	(c+C c^p J(t)) U + \tilde C + C \tilde C^p t, 
\end{aligned}
\]
where $J$ is given by \eqref{eq:ouestij}. 
We estimate $J$ as follows. 
By computations similar to the derivation of \eqref{eq:hfygcom} 
(with $\rho_T$ replaced by $1$), we have 
\[
\begin{aligned}
	&h(U(x,t)) 
	&\leq 
	\left\{ 
	\begin{aligned}
	&C t^{-\frac{\alpha}{p-1}}  
	&&\mbox{ if }p>p_F,  \\
	&C ( 1+t^\frac{1}{2} ) t^{-\frac{N}{2}} 
	(\log(e+ r^{-1}))^{-\frac{N}{2}+\beta} 
	&&\mbox{ if }p=p_F. 
	\end{aligned}
	\right.
\end{aligned}
\]
Recall \eqref{eq:hinves}. Then we obtain 
\[
\begin{aligned}
	U(x,t) &\leq 
	\left\{
	\begin{aligned}
	& C t^{-\frac{1}{p-1}} && \mbox{ if }p>p_F,   \\
	& C( 1+t^\frac{1}{2} ) t^{-\frac{N}{2}} 
	(\log(e+t^{-\frac{1}{2}}))^{-\frac{N}{2}} 
	&& \mbox{ if }p=p_F,  \\
	\end{aligned}
	\right.
\end{aligned}
\]
for $(x,t)\in M\times (0,\rho_\infty^2)$, 
where $C>0$ depends only on $N$ and $p$. 
Similar computations to \eqref{eq:Jtesti} show that 
\[
\begin{aligned}
	J(t) &\leq 
	\left\{ 
	\begin{aligned}
	& C t^{-\frac{\alpha-1}{p-1}} 
	\int_0^t s^{-\frac{p-\alpha}{p-1}} ds 
	&& (p>p_F) \\
	& 
	C (\log(e + t^{-\frac{1}{2}} ))^\beta (1+t^\frac{1}{2})^{p-1}
	\int_0^t  s^{-1} 
	(\log(e+ s^{-\frac{1}{2}}))^{-1-\beta} ds 
	&& (p=p_F) 
	\end{aligned}
	\right. \\
	&\leq 
	\left\{ 
	\begin{aligned}
	& C  && (p>p_F) \\
	& C (1+t^\frac{1}{2})^p && (p=p_F)
	\end{aligned}
	\right. 
\end{aligned}
\]
for $0<t<\rho_\infty^2$, 
where $C>0$ depends only on $N$ and $p$.

Let $0<T<\rho_\infty^2$. Then, 
\[
\begin{aligned}
	\Psi[\overline{u}] 
	&\leq 
	\left\{ 
	\begin{aligned}
	&(c+C c^p ) U + \tilde C + C \tilde C^p T
	&&\mbox{ if }p>p_F, \\
	&(c+C c^p (1+T^\frac{1}{2})^p ) U + \tilde C + C \tilde C^p T 
	&&\mbox{ if }p=p_F, \\
	\end{aligned}
	\right.
\end{aligned}
\]
for $0<t<T$. 
By choosing $T\leq \min\{ 1, C^{-1} \tilde C^{-(p-1)} \}$ 
and $c\leq 2^{-p/(p-1)} C^{-1/(p-1)}$, 
we see that $\overline{u}$ is a supersolution 
of \eqref{eq:integ} in $M\times [0,T)$. 
We remark that the choice of $c$ depends only on $N$ and $p$. 
The rest of the proof is the same as that of Theorem \ref{th:necsha}. 
\end{proof}

\appendix
\section{Comparison theorems}\label{sec:app-comp}
In this appendix, we prepare estimates which are needed in this paper and strongly related to the so-called comparison theorems in Riemannian geometry. 
Those include bounds of the Laplacian of the distance function, Jacobi fields, the volume of balls and the heat kernel. 
In this section, we assume that $(M,g)$ is a connected $N$-dimensional complete Riemannian manifold with $0<\inj(M)\leq\infty$ and $|\sec(M)| \leq \kappa$ 
for some $0\leq \kappa<\infty$.

\begin{remark}
We remark that the bound of $\sec(M)$ is not imposed for $N=1$, 
since the sectional curvature is not defined. 
It is clear that a 1-dimensional connected manifold 
is diffeomorphic to $\R^{1}$ or $\bS^{1}$. 
Moreover, by a suitable coordinate change, 
we can say that any 1-dimensional connected complete Riemannian manifold is 
isometric to $\R^{1}$ with the standard metric $g_{\R^{1}}$ 
or $\bS^{1}$ with the standard metric $g_{\bS^{1}}$. 
This means that it suffices to consider 
$(\R^{1},g_{\R^{1}})$ 
or $(\bS^{1},g_{\bS^{1}})$ when $N=1$ 
in the category of connected complete Riemannian manifolds. 
Thus, in the following, we implicitly assume that 
$g$ is $g_{\R^{1}}$ or $g_{\bS^{1}}$ in the case $N=1$. 
\end{remark}

\subsection{Laplacian comparison}
We start with the Laplacian comparison. 
Fix a point $z_0\in M$ and put $r(x):=d(z_0,x)$ for $x\in M$. 
Then, $r$ is a smooth function on $B(z_0,\inj(M))\setminus \{z_0\}$ with $|\nabla r|=1$. 
Moreover, since $\Ric\geq (N-1)(-\kappa)$ in the case $N\geq 2$, from the Laplacian comparison theorem (see \cite[Theorem 11.15]{Lebook} for instance), 
it follows that 
\[
	\Delta r (x)\leq 
	\left\{ 
	\begin{aligned}
	&(N-1)r^{-1}(x)  &&\mbox{ if }\kappa=0,  \\
	&(N-1)\sqrt{\kappa} \coth (\sqrt{\kappa} r(x))  
	&& \mbox{ if }\kappa>0, 
	\end{aligned}
	\right. 
\]
for $x\in B(z_0,\inj(M))$ with $0<r(x)<\pi/\sqrt{\kappa}$. 
Since $\sqrt{\kappa} \coth (\sqrt{\kappa} r)\leq 2r^{-1}$ for $r\in(0,1/\sqrt{\kappa})$ if $\kappa>0$, 
we have 
\begin{equation}\label{prep1}
	\Delta r (x)\leq 2(N-1)r^{-1}(x)
\end{equation}
for $x\in B(z_0,\inj(M))$ with $0<r(x)<\pi/(4\sqrt{\kappa})$. 
This also holds for $x\in B(z_0,\inj(M))$ in the case $N=1$, since $\Delta r(x)=0$.

\subsection{Volume comparison}
We explain the upper and lower bound of the volume form. 
Fix a point $z_0\in M$ and put $r(x):=d(z_0,x)$ for $x\in M$. 
Let $(e_{1},\dots,e_{N})$ be an orthonormal basis of $T_{z_0}M$. 
Then, the map which sends $(x_{1},\dots,x_{N})\in \R^N$ to $\exp_{z_0}(x_{1}e_{1}+\dots+x_{N}e_{N})\in M$ gives 
Riemannian normal coordinates centered at $z_0$ on $B(z_0,\inj(M))$, where $\exp_{z_0}:T_{z_0}M\to M$ is the exponential map at $z_0$. 
Denote by $g_{ij}$ the coefficients of $g$ with respect to this normal coordinates. 
For a while, assume $N\geq 2$. 
Then, by the proof of \cite[Theorem 11.16]{Lebook}, we have 
\[
	\sqrt{\det(g_{ij}(x))}
	\leq
	\left\{ 
	\begin{aligned}
	&1 &&\mbox{ if }\kappa=0,  \\
	&(\sqrt{\kappa} r(x))^{1-N} \sinh^{N-1}(\sqrt{\kappa}r(x)) 
	&&\mbox{ if }\kappa>0, 
	\end{aligned}
	\right.
\]
for $x\in B(z_0,\inj(M))$, where we used $\Ric\geq (N-1)(-\kappa)$. 
Since $(\sqrt{\kappa} r)^{1-N} \sinh^{N-1}(\sqrt{\kappa}r)\leq 2^{N-1}$ for $r\in[0,1/\sqrt{\kappa})$ if $\kappa>0$, 
we have 
\begin{equation}\label{prep2}
	\sqrt{\det(g_{ij}(x))}\leq 2^{N-1}
\end{equation}
for $x\in B(z_0,\inj(M))$ with $0\leq r(x)<\pi/(4\sqrt{\kappa})$. 
This also holds for $x\in B(z_0,\inj(M))$ in the case $N=1$, since $g_{11}=1$. 
Especially, we have 
\begin{equation}\label{prep3}
	\Vol(B(x,r))\leq 2^{N-1}N^{-1}\Area(\bS^{N-1})r^{N}
\end{equation}
for all $x\in M$ and $0\leq r < \min\{\inj(M),\pi/(4\sqrt{\kappa})\}$ when $N\geq 2$, and for all $x\in M$ and $0\leq r < \inj(M)$ when $N=1$. 
Here, $\Area(\bS^{N-1})$ is the area of the unit sphere in $\R^{N}$. 

We can also derive the lower bound of the volume form, see for instance the proof of \cite[Theorem 11.14]{Lebook}. 
From $\sec(M)\leq \kappa$ in the case $N\geq 2$, it follows that 
\[
	\sqrt{\det(g_{ij}(x))}
	\geq 
	\left\{ 
	\begin{aligned}
	&1 &&\mbox{ if }\kappa=0,  \\
	&(\sqrt{\kappa} r(x))^{1-N} \sin^{N-1}(\sqrt{\kappa} r(x)) &&
	\mbox{ if }\kappa>0, 
	\end{aligned}
	\right. 
\]
for $x\in B(z_0,\inj(M))$ with $0\leq r(x)<\pi/\sqrt{\kappa}$. 
Since $(\sqrt{\kappa}r)^{1-N} \sin^{N-1}(\sqrt{\kappa}r)\geq (1/2)^{N-1}$ for $r\in[0,\pi/(2\sqrt{\kappa}))$ if $\kappa>0$, 
we have 
\begin{equation}\label{prep4}
	\sqrt{\det(g_{ij}(x))}\geq 2^{1-N}
\end{equation}
for $x\in B(z_0,\inj(M))$ with $0\leq r(x)<\pi/(4\sqrt{\kappa})$, 
and this also holds for $x\in B(z_0,\inj(M))$ in the case $N=1$, since $g_{11}=1$. 
Especially, we have 
\begin{equation}\label{prep5}
	\Vol(B(x,r))\geq 2^{1-N}N^{-1}\Area(\bS^{N-1})r^{N}
\end{equation}
for all $x\in M$ and $0\leq r < \min\{\inj(M),\pi/(4\sqrt{\kappa})\}$
when $N\geq 2$, and for all $x\in M$ and $0\leq r < \inj(M)$ when $N=1$ 

\subsection{Jacobi field comparison}
Based on the estimates of Jacobi fields, 
we derive the upper and lower bound of the derivative of the exponential map and deduce some applications. 
Fix a point $z_0\in M$ and put $r(x):=d(z_0,x)$ for $x\in M$. 
Let $(e_{1},\dots,e_{N})$ be an orthonormal basis of $T_{z_0}M$. 
Denote by $\varphi:B(z_0,\inj(M))\to B(O,\inj(M))\subset \R^{N}$ 
the inverse of the map which sends $(x_{1},\dots,x_{N})\in \R^N$ to $\exp_{z_0}(x_{1}e_{1}+\dots+x_{N}e_{N})\in M$. 
Then, in the case $N\geq2$, the Rauch comparison theorem, see \cite[Corollary 5.6.1]{Jostbook} for instance, implies that 
\[
	|D_{\eta}\exp_{z_0}(w)|^2\leq
	\left\{
	\begin{aligned}
	&|w_{1}|^2+|w_{2}|^2 &&\mbox{ if }\kappa=0, \\
	&|w_{1}|^2+|w_{2}|^2(\sqrt{\kappa}|\eta|)^{-2} \sinh^{2}(\sqrt{\kappa}|\eta|) &&\mbox{ if }\kappa>0, 
	\end{aligned}
	 \right.
\]
for $\eta\in T_{z_0}M$, 
where $w_{1}$ and $w_{2}$ are vectors in $T_{z_0}M$ which are uniquely determined by $w=w_{1}+w_{2}$
with $w_{1}\in \R \eta$ and $w_{2}\in (\R \eta)^{\bot}$. 
Then, similarly as \eqref{prep2}, we can say that 
\[
|D_{\eta}\exp_{z_0}(w)|\leq 2 |w|
\]
for $\eta\in T_{z_0}M$ with $|\eta|\leq\pi/(4\sqrt{\kappa})$. 
As a corollary of this estimate, we can compare the distance on $M$ and $\R^{N}$. 
Let $a,b\in B(z_0,\rho_{\infty})$, where $\rho_{\infty}$ 
is given by \eqref{eq:rhopr0def}. 
Put $a':=\exp_{z_0}^{-1}(a)$ and $b':=\exp_{z_0}^{-1}(b)$. 
Then, $c(t):=\exp_{z_0}((1-t)a'+tb')$ is a curve in $B(z_0,\rho_{\infty})$ 
joining $a$ and $b$. 
Thus, we have 
\begin{equation}\label{prep6}
d(a,b)\leq \int_{0}^{1}|\dot{c}(t)|dt=\int_{0}^{1}|D_{c(t)}\exp_{z_0}(b'-a')|dt\leq 2|b'-a'|. 
\end{equation}
We remark that this also holds for $N=1$. 

Similarly, we can estimate the distance from below. 
The upper bound of the sectional curvature with the Rauch comparison theorem (see \cite[Corollary 5.6.1]{Jostbook}) 
implies that 
\[
	|D_{\eta}\exp_{z_0}(w)|^2\geq
	\left\{
	\begin{aligned}
	&|w_{1}|^2+|w_{2}|^2 &&\mbox{ if }\kappa=0, \\
	&|w_{1}|^2+|w_{2}|^2(\sqrt{\kappa}|\eta|)^{-2} \sin^{2}(\sqrt{\kappa}|\eta|)&&\mbox{ if }\kappa>0, 
	\end{aligned}
	 \right.
\]
for $\eta\in T_{z_0}M$ with $|\eta|\leq\pi/\sqrt{\kappa}$ in the case $N\geq 2$, 
where $w_{1}$ and $w_{2}$ are as above. 
Then, similarly as \eqref{prep4}, we can say that 
\[
|D_{\eta}\exp_{z_0}(w)|\geq 2^{-1} |w|
\]
for $\eta\in T_{z_0}M$ with $|\eta|\leq7\pi/(12\sqrt{\kappa})$. 
By this estimate, we can also bound $d(a,b)$ from below. 
Fix $a,b\in B(z_0,\rho_{\infty})$ and let $c(t)$ 
($t\in[0,1]$) be the geodesic from $a$ to $b$. 
Then, we see that $c(t)\in B(z_0,7\rho_{\infty}/3)$ for all $t\in[0,1]$. 
Actually, if not, there exists $t'\in(0,1)$ such that 
$c(t')\notin B(z_0,7\rho_{\infty}/3)$. 
Then, $d(a,c(t'))>4\rho_{\infty}/3$ and $d(b,c(t'))>4\rho_{\infty}/3$, 
and this implies that the length of $c(t)$ 
is bigger than $8\rho_{\infty}/3$. 
This contradicts the fact that the length of $c(t)$ is now $d(a,b)$ 
and it is smaller than $2\rho_{\infty}$. 
We remark that $7\rho_{\infty}/3<\inj(M)$. 
Put $\xi(t):=\exp_{z_0}^{-1}(c(t))\in B(O,7\rho_{\infty}/3)$. 
Then, we have $c(t)=\exp_{z_0}(\xi(t))$ and 
\begin{equation}\label{prep7}
d(a,b)=\int_{0}^{1}|\dot{c}(t)|dt=\int_{0}^{1}|D_{\xi(t)}\exp_{z_0}(\dot{\xi}(t))|dt
\geq 2^{-1}\int_{0}^{1}|\dot{\xi}(t)|dt\geq 2^{-1}|b'-a'|, 
\end{equation}
where $a':=\exp_{z_0}^{-1}(a)$ and $b':=\exp_{z_0}^{-1}(b)$. 
We remark that this also holds  for $N=1$. 

\subsection{Estimates of the heat kernel}
Let $K(x,y,t)$ be the heat kernel on $(M,g)$. 
Thanks to many established works on the heat kernel on a Riemannian manifold (for instance Cheeger and Yau \cite{CY81} and Li and Yau \cite{LY86}), 
we can assume that $K(x,y,t)$ is similar to the heat kernel on the Euclidean space especially in the case where $d(x,y)$ and $t$ are very small. 
We explain those estimates necessary in this paper. 

First, we intorduce a direct corollary form \cite[THEOREM 2.2]{LY86}. 
For a while, assume $N\geq 2$. 
Since $\sec(M)\geq -\kappa$ implies $\Ric\geq (N-1)(-\kappa)$, we can use it. 
Set $q=\gamma=\gamma_{0}=\theta=0$, $t_1=t$, $t_2=2t$ 
and $\alpha=3/2$ in that theorem. 
Then, we can say that there exists a constant $C>0$ depending only on $N$ such that 
\[
	K(x,y,t) \leq 
	\left\{ 
	\begin{aligned}
	& 2^{3N/4}\exp\left(\frac{3 d(y,z)^2}{8t}\right) 
	K(x,z,2t) &&\mbox{ if }\kappa =0, \\
	& 2^{3N/4}\exp\left(C(N-1)\kappa t+\frac{3 d(y,z)^2}{8t}\right)
	K(x,z,2t) &&\mbox{ if }\kappa>0, \\
	\end{aligned}
	\right. 
\]
for all $x,y,z\in M$ and $t\in(0,\infty)$. 
Thus, if $d(y,z)\leq \sqrt{t}$ and $t<\pi^2/\kappa$, 
we can say that there exists a constant $C>0$ depending only on $N$ such that 
\begin{equation}\label{prep8}
K(x,y,t) \leq C K(x,z,2t).
\end{equation}
This also holds for $d(y,z)\leq \sqrt{t}$ when $N=1$ by the explicit formula of the heat kernel on $(\R^{1},g_{\R^{1}})$ or $(\bS^{1},g_{\bS^{1}})$. 

Next, we introduce an upper bound of $K(x,y,t)$ by $C_{1}t^{-N/2}e^{-d(x,y)^2/(C_{2}t)}$ as an application of \cite[COROLLARY 3.1]{LY86}.
They proved that, when $N\geq2$, if the Ricci curvature is bounded from below by $-A$, for some $A\geq 0$, 
then for $1<\alpha<2$ and $0<\varepsilon<1$, the heat kernel satisfies 
\[K(x,y,t)\leq \left(\frac{C(\varepsilon)^{2\alpha}}{\Vol(B(x,\sqrt{t}))\Vol(B(y,\sqrt{t}))}\right)^{\frac{1}{2}}\exp\left(\frac{C(N)\varepsilon At}{\alpha-1}-\frac{d(x,y)^2}{(4+\varepsilon)t}\right). \]
Thus, we can apply the above upper bound with $A=(N-1)\kappa$. 
Then, letting $\alpha=3/2$ and $\varepsilon=1/2$ and applying the volume comparison 
theorem \eqref{prep5}, 
we can say that the heat kernel satisfies 
\begin{equation}\label{prep9}
K(x,y,t)\leq Ct^{-N/2}\exp\left(-\frac{d(x,y)^2}{(4+(1/2))t}\right)
\end{equation}
for $0<\sqrt{t}<\min\{\inj(M),\pi/(4\sqrt{\kappa})\}$, 
where $C>0$ depends only on $N$, $\kappa$ and $\inj(M)$. 
Of course, we can check that this holds for $0<\sqrt{t}<\inj(M)$ when $N=1$ by the explicit formula of the heat kernel. 

A lower bound of $K(x,y,t)$ can be induced from a result of Cheeger and Yau \cite{CY81}. 
Put 
\[
	\lambda:=\left\{ 
	\begin{aligned}
	&-1 && \mbox{ if } \kappa=0, \\
	&-\kappa && \mbox{ if } \kappa> 0, 
	\end{aligned}
	\right.
\]
for $N\geq 2$. 
Then, $\lambda< 0$ and $\Ric\geq (N-1)\lambda$. 
By \cite[Theorem 3.1]{CY81} (see also THEOREM 7 of Section 4 of Chapter VIII in \cite{Chbook}), 
we can say that the heat kernel satisfies 
\[K(x,y,t)\geq K_{\lambda}(d(x,y),t)\]
for all $x,y\in M$ and $t\in(0,\infty)$, where $K_{\lambda}:[0,\infty)\times(0,\infty)\to\mathbb{R}$ 
be a smooth function for $(r,t)$ so that $K_{\lambda}(d_{\lambda}(\tilde{x},\tilde{y}),t)$ 
becomes the heat kernel on the $N$-dimensional simply connected space form of constant sectional curvature $\lambda$ with 
its induced distance function $d_{\lambda}$. 
A lower bound of $K_{\lambda}$ is obtained by Davies and Mandouvalos \cite[Theorem 3.1]{DM88}. 
Even though their estimate is for the case $\lambda=-1$, by scaling appropriately for $\lambda<0$, we can see that 
\[
\begin{aligned}
	K_{\lambda}(r,t) 
	&\geq Ct^{-\frac{N}{2}}
	\exp\left(-\frac{r^2}{4t}+\frac{(N-1)^2\lambda t}{4}
	-\frac{(N-1)\sqrt{-\lambda}r}{2}\right)\\
	&\quad \times \left( 1+ \sqrt{-\lambda}r-\lambda t
	\right)^{\frac{N-1}{2}-1}\left(1+\sqrt{-\lambda}r\right), 
\end{aligned}
\]
where $C>0$ depends only on $N$ and $\kappa$. 
By Young's inequality, we have  
\[
\frac{(N-1)\sqrt{-\lambda}r}{2}=2\frac{r}{2\sqrt{t}}\frac{(N-1)\sqrt{-\lambda t}}{2}\leq \frac{r^2}{4t}+\frac{(N-1)^2(-\lambda)t}{4}. 
\]
Thus, if $t$ is restricted to $[0,T)$ with $0<T<\infty$, we have 
\[\exp\left(-\frac{r^2}{4t}+\frac{(N-1)^2\lambda t}{4}-\frac{(N-1)\sqrt{-\lambda}r}{2}\right)\geq C \exp\left(-\frac{r^2}{2t}\right), \]
where $C>0$ depends only on $N$, $T$ and $\kappa$. 
Also, one can see that, for $t \in [0,T)$ with $0<T<\infty$, 
\[
\left( 1+ \sqrt{-\lambda}r-\lambda t\right)^{\frac{N-1}{2}-1}\left(1+\sqrt{-\lambda}r\right)\geq C
\]
for some $C>0$ depends only on $N$, $T$ and $\kappa$. 
Thus, we see that there exists $C>0$ depending only on $N$, $T$ and $\kappa$ such that for $t\in (0,T)$ and $x,y \in M$ 
the heat kernel satisfies 
\begin{equation}\label{prep10}
K(x,y,t)\geq Ct^{-N/2}\exp\left(-\frac{d(x,y)^2}{2t}\right). 
\end{equation}
Of course, this holds for $N=1$. 

Combining \eqref{prep9} and \eqref{prep10}, we can deduce 
the following scaling property of the heat kernel. 
Assume $N\geq 2$ for a while. 
Fix a base point $z_0\in M$ and an orthonormal basis $(e_{1},\dots,e_{N})$ of $T_{z_0}M$. 
Denote by $\varphi:B(z_0,\inj(M))\to B(O,\inj(M))\subset \R^{N}$ 
the inverse of the map which sends $(x_{1},\dots,x_{N})\in \R^N$ to $\exp_{z_0}(x_{1}e_{1}+\dots+x_{N}e_{N})\in M$. 
Take $\xi,\eta \in B(O,\rho_{\infty})\subset \R^{N}$, $0<\alpha<1$ and $0<\beta<\infty$. Put, for simplicity, 
\[
	\alpha\xi':=\varphi^{-1}(\alpha \xi),
	\qquad 
	\alpha\eta':=\varphi^{-1}(\alpha \eta),
	\qquad \xi':=\varphi^{-1}(\xi),
	\qquad \eta':=\varphi^{-1}(\eta). 
\]
Fix $0<T<\infty$. Then, by \eqref{prep10} and \eqref{prep6}, 
there exists $C>0$ depending only on $N$, $\beta T$ and $\kappa$ such that 
\[K(\alpha\xi',\alpha\eta',\beta t)\geq C(\beta t)^{-N/2}\exp\left(-\frac{\alpha^2|\xi-\eta|^2}{\beta t}\right)\]
for all $t\in(0,\beta T)$. 
On the other hand, by \eqref{prep9} and \eqref{prep7}, there exists $C'>0$ depending only on $N$, $\kappa$ and $\inj(M)$ such that 
\[
	K(\xi',\eta',t)\leq C't^{-N/2}
	\exp\left(-\frac{|\xi-\eta|^2}{2(4+(1/2))t}\right)
\]
for $0<\sqrt{t}<\min\{\inj(M),\pi/(4\sqrt{ \kappa})\}$. 
Comparing the above two inequalities, we can see that 
there exists $C>0$ depending 
only on $N$, $\kappa$, $\inj(M)$, $\alpha$, $\beta$ and $T$ such that 
\begin{equation}\label{prep11}
K(\alpha\xi',\alpha\eta',\beta t)\geq C K\left(\xi',\eta',\frac{\beta}{9\alpha^2}t\right)
\end{equation}
for $0<\sqrt{t}<\min\{\inj(M),\pi/(4\sqrt{\kappa}),\sqrt{\beta T}\}$. 
Without the above argument, we can directly prove \eqref{prep11} 
in the case $N=1$ by the explicit formula of the heat kernel. 

\section{Covering theorems}\label{sec:appa}
Int this appendix, 
we prove two facts on Riemannian manifolds. 
One is a kind of the Besicovitch covering theorem and 
the other is an upper bound of the number which 
we need to cover a ball with half balls. 
Recall that $(M,g)$ is a connected $N$-dimensional 
complete Riemannian manifold 
with $\inj(M)>0$. 
We also assume $|\sec(M)| \leq \kappa$ for some $0\leq \kappa < \infty$ when $N\geq 2$. 

Roughly speaking, 
by \cite[Theorem 2.8.14]{Febook}, 
the Besicovitch covering theorem 
holds if a metric space $(X,d)$ is $(\xi,\eta,\zeta)$-limited 
in the sense of \cite[Definition 2.8.9]{Febook}. 
First, we introduce the definition of a $(\xi,\eta,\zeta)$-limited space. 
Let $(X,d)$ be a metric space. 
Fix $A\subset X$, $\xi>0$ and $0<\eta\leq 1/3$. 
For $a\in A$ and $B\subset B(a,\xi)\setminus \{a\}$ with $\#B\geq2$, we define 
\begin{equation}\label{width-func}
	\Width_{a,B}(b,c):=
	\inf\left\{\frac{d(x,c)}{d(a,c)}; 
	\begin{aligned}
	&x\in X, d(a,x)=d(a,c), \\
	&d(a,x)+d(x,b)=d(a,b)
	\end{aligned}\right\}
\end{equation}
for all $b,c\in B$ with $b\neq c$ and $d(a,b)\geq d(a,c)$. 

\begin{definition}
For $\zeta\in\mathbb{N}$, 
$(X,d)$ is directionally $(\xi,\eta,\zeta)$-limited at $A$ if 
\[
	\sup\{\#B; a \mbox{ and } B \mbox{ satisfy } \Width_{a,B}\geq \eta\}
	\leq \zeta. \]
\end{definition}

\begin{theorem}[{\cite[Theorem 2.8.14]{Febook}}]\label{Besicovitch}
Assume that $(X,d)$ is directionally $(\xi,\eta,\zeta)$-limited at $A$. 
If $\mathcal{F}$ is a family of closed balls 
with radii less than $\mu$ with $\mu<\xi/2$ and 
each point of $A$ is the center of some member of $\mathcal{F}$, then 
$A$ is contained in the union of $2\zeta+1$ disjointed subfamilies of $\mathcal{F}$.
\end{theorem}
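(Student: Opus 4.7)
The plan is to follow the classical Federer-type proof, which proceeds in two phases: a transfinite (greedy) selection of a covering subfamily from $\mathcal{F}$, followed by a coloring argument that splits the selected balls into $2\zeta+1$ disjointed subfamilies using the directional limitation hypothesis. Both phases are essentially metric-geometric and do not need any structure beyond $(\xi,\eta,\zeta)$-limitedness.

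For the selection phase, I would well-order the relevant portion of $A$ by transfinite induction. At each ordinal stage $\alpha$, among the centers $a\in A$ not yet covered by the previously chosen balls, I would pick $a_\alpha$ together with a ball $B_\alpha=\overline{B(a_\alpha,r_\alpha)}\in\mathcal{F}$ whose radius $r_\alpha$ is at least, say, three-quarters of $\sup\{r:\overline{B(a,r)}\in\mathcal{F},\ a\text{ uncovered}\}$. Standard arguments (using $r_\alpha<\mu$ and the definition of the supremum) show this process terminates and produces a well-ordered family $\{B_\alpha\}$ whose union contains $A$, with the key size-control $r_\beta<(4/3)\,r_\alpha$ whenever $\beta<\alpha$ and $B_\beta\cap B_\alpha\neq\emptyset$.

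For the coloring phase, I would assign colors $c(\alpha)\in\{1,\dots,2\zeta+1\}$ by transfinite induction: pick $c(\alpha)$ to be any color not already used by the previously selected balls $B_\beta$ that meet $B_\alpha$. The nontrivial task is to prove that the number of such $B_\beta$ is at most $2\zeta$. I would split them into two classes, the \emph{near} ones with $d(a_\alpha,a_\beta)\leq 3r_\alpha$ and the \emph{far} ones with $d(a_\alpha,a_\beta)>3r_\alpha$ (still intersecting $B_\alpha$), and argue that each class has cardinality at most $\zeta$ by invoking the $(\xi,\eta,\zeta)$-limitation at the point $a_\alpha\in A$ applied to the set $B=\{a_\beta\}$ of the corresponding centers. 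The size-control on $r_\beta$ from the selection phase ensures that distinct centers $a_\beta,a_{\beta'}$ in the same class give $\Width_{a_\alpha,\{a_\beta,a_{\beta'}\}}\geq\eta$, hence $\#B\leq\zeta$ by definition of limitation.

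The main obstacle is precisely this width estimate: one must translate the analytic statement ``$B_\beta$ and $B_{\beta'}$ both meet $B_\alpha$ and are selected \emph{before} $B_\alpha$'' into the geometric statement that no point $x$ equidistant to $a_\alpha$ from $a_{\beta'}$ and lying on a minimizing geodesic from $a_\alpha$ to $a_\beta$ can get too close to $a_{\beta'}$ relative to $d(a_\alpha,a_{\beta'})$. Carrying this out requires careful triangle-inequality bookkeeping together with the ratio bound $r_\beta<(4/3)r_\alpha$; the constant $\eta=1/3$ in the directional limitation (as it will be used in Corollary~\ref{besi}) should arise naturally from this chain of inequalities. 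The rest of the theorem is then purely combinatorial.
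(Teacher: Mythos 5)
The paper does not give a proof of this theorem: it is stated as a cited result, taken verbatim from Federer's \emph{Geometric measure theory} \cite[Theorem 2.8.14]{Febook}, and the text immediately moves on to verifying the hypothesis that $(M,g)$ is directionally limited. There is therefore no paper proof to compare your sketch against; what you are outlining is Federer's own argument, which the authors deliberately black-box.

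As a sketch of Federer's proof your outline captures the two-phase structure (transfinite greedy selection, then coloring via directional limitation), but two things need fixing before it could count as a proof. First, the size-control inequality is stated in the wrong direction: since $a_\alpha$ is still uncovered at every stage $\beta<\alpha$, the near-maximality of $r_\beta$ forces $r_\beta\ge\tfrac{3}{4}r_\alpha$, i.e.\ $r_\alpha\le\tfrac{4}{3}r_\beta$ (earlier balls are comparable-or-larger), not $r_\beta<\tfrac{4}{3}r_\alpha$ as you wrote. Second, and more seriously, the entire weight of the theorem sits on the claim that in each of your two classes (near/far), distinct earlier-chosen intersecting centers $a_\beta,a_{\beta'}$ satisfy $\Width_{a_\alpha,\{a_\beta,a_{\beta'}\}}\ge\eta$ so that the $(\xi,\eta,\zeta)$-limitation caps the count at $\zeta$. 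You explicitly defer this to ``careful triangle-inequality bookkeeping,'' but that bookkeeping \emph{is} the proof: it must combine the intersection condition, the monotone radius comparison just corrected, and the definition \eqref{width-func} of width to rule out a point $x$ on the geodesic from $a_\alpha$ to $a_\beta$ (at distance $d(a_\alpha,a_{\beta'})$ from $a_\alpha$) being $\eta$-close to $a_{\beta'}$. Without this computation, and without verifying that the near/far split with threshold $3r_\alpha$ is actually the correct one (Federer's division is not entirely obvious), the proposal is an outline rather than a proof. Since the paper does not rely on reproducing this argument, citing Federer as the authors do is the safer route.
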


For our use, it suffices to prove that $(M,g)$ is directionally $(\xi,\eta,\zeta)$-limited at $M$ for some $(\xi,\eta,\zeta)$. 
Take $\xi\in (0,\inj(M))$. 
Fix $a\in M$ and $B\subset B(a,\xi)\setminus \{a\}$ with $\#B\geq2$. 
We denote by $UT_{a}M:=\{v\in T_{a}M; |v|_{g}=1\}$ the unit sphere in $T_{a}M$ 
and define a map $\theta:B(a,\inj(M))\setminus \{a\}\to UT_{a}M$ by $\theta(b):=\exp_{a}^{-1}(b)/|\exp_{a}^{-1}(b)|$. 

\begin{lemma}\label{theta-inj}
If $\Width_{a,B}>0$, then $\theta$ restricted to $B$ is injective. 
\end{lemma}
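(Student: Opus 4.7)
The plan is to argue by contrapositive: I will assume $\theta(b)=\theta(c)$ for two distinct points $b,c\in B$ and show that $\Width_{a,B}(b,c)=0$, contradicting the hypothesis $\Width_{a,B}>0$.

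First I will use the assumption $\xi<\inj(M)$ to set up the geometric picture. Since $B\subset B(a,\xi)\setminus\{a\}\subset B(a,\inj(M))\setminus\{a\}$, the exponential map $\exp_a$ is a diffeomorphism from the open ball in $T_aM$ of radius $\inj(M)$ onto $B(a,\inj(M))$, and in particular $|\exp_a^{-1}(p)|_g=d(a,p)$ for every $p$ in this ball. Writing $v_b:=\exp_a^{-1}(b)$ and $v_c:=\exp_a^{-1}(c)$, the hypothesis $\theta(b)=\theta(c)$ is exactly the statement $v_b/|v_b|=v_c/|v_c|$, i.e. $v_c=(d(a,c)/d(a,b))\,v_b$. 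Without loss of generality $d(a,b)\geq d(a,c)$, so $c$ lies on the (unique, minimizing) radial geodesic $\gamma(s):=\exp_a(sv_b/|v_b|)$ from $a$ to $b$. The triangle inequality combined with the fact that this curve realizes the distance yields the splitting $d(a,b)=d(a,c)+d(c,b)$.

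Next I will plug $x=c$ into the infimum defining $\Width_{a,B}(b,c)$ in \eqref{width-func}. The constraint $d(a,x)=d(a,c)$ is trivially satisfied, and the constraint $d(a,x)+d(x,b)=d(a,b)$ is exactly the splitting just established. The corresponding value of the ratio is $d(x,c)/d(a,c)=0$, so the infimum is $0$. Thus $\Width_{a,B}(b,c)=0$, contradicting $\Width_{a,B}>0$.

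The only real content is the radial-geodesic observation in the first paragraph; once one is inside $B(a,\inj(M))$, sameness of direction in $T_aM$ forces $c$ onto the geodesic segment from $a$ to $b$, after which the splitting $d(a,b)=d(a,c)+d(c,b)$ is immediate and the choice $x=c$ finishes everything. I do not anticipate any genuine obstacle; the curvature bound $|\sec(M)|\leq\kappa$ plays no role here, and the argument works verbatim for $N=1$.
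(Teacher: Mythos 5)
Your proof is correct and takes essentially the same approach as the paper: both arguments note that $\theta(b)=\theta(c)$ places $b$ and $c$ on a common radial minimizing geodesic from $a$, choose $x=c$ as a competitor in the infimum defining $\Width_{a,B}(b,c)$, and conclude that the infimum is $0$. Your write-up simply spells out the translation into normal coordinates via $\exp_a^{-1}$ in more detail than the paper does.
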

\begin{proof}
If not, there exist $b,c\in B$ with $b\neq c$ such that $\theta(b)=\theta(c)$. 
Assume $d(a,b)\geq d(b,c)$ if necessary. 
Then $x:=c\in M$ satisfies $d(a,x)=d(a,c)$ and $d(a,x)+d(x,b)=d(a,b)$, since 
$b$ and $c$ are on the same unit speed shortest geodesic $\gamma$ emerging from $a$ with $\gamma'(0)=\theta(b)(=\theta(c))$. 
Then, the right-hand side of \eqref{width-func} should be $0$ and this contradicts $\Width_{a,B}>0$. 
\end{proof}

Considering $\theta(B)$ as a subset of the metric space $T_{a}M$ with metric $g_{a}$, 
we can also define $\Width_{0,\theta(B)}$ for $\theta(B)$. 
Put a positive constant 
\begin{equation}\label{def_of_A}
	A:= 
	\left\{ 
	\begin{aligned}
	&2 && \mbox{ if }  \kappa= 0 \mbox{ or } N=1,\\
	&2(\sqrt{\kappa}\xi)^{-1}\sinh(\sqrt{\kappa}\xi) && \mbox{ if } \kappa> 0. 
	\end{aligned}
	\right.
\end{equation}
We remark that $A\to \infty$ as $\sqrt{\kappa}\xi\to \infty$ if $\kappa>0$. 

\begin{lemma}\label{width-width}
If $\Width_{a,B}\geq \eta$ for some $0<\eta<\infty$, then 
$\Width_{0,\theta(B)}\geq A^{-1}\eta$.
\end{lemma}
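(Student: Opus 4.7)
The plan is to first recognise that $\Width_{0,\theta(B)}$ collapses to the Euclidean chord distance on $\theta(B)\subset UT_aM$, then identify the attainer $x^*$ of $\Width_{a,B}(b,c)$, and finally use the Rauch (Jacobi field) comparison bound $|D_\eta\exp_a(w)|\leq (\sqrt{\kappa}|\eta|)^{-1}\sinh(\sqrt{\kappa}|\eta|)|w|$ from the appendix to compare the two widths. Concretely, one has to verify that the ``straight line'' in the tangent space between $d(a,c)\theta(b)$ and $d(a,c)\theta(c)$ still lives in the normal chart and then integrate the Rauch bound along it.

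Step 1. For $\theta(b),\theta(c)\in\theta(B)\subset UT_aM$, both vectors have unit norm in $(T_aM,g_a)$. The constraints in the definition of $\Width_{0,\theta(B)}(\theta(b),\theta(c))$ read $|y|=|\theta(c)|=1$ and $|y|+|y-\theta(b)|=|\theta(b)|=1$, which force $y=\theta(b)$. Hence $\Width_{0,\theta(B)}(\theta(b),\theta(c))=|\theta(b)-\theta(c)|$, and the claim becomes $|\theta(b)-\theta(c)|\geq A^{-1}\eta$.

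Step 2. Since $b\in B(a,\xi)\subset B(a,\inj(M))$, the minimal geodesic from $a$ to $b$ is unique and parametrised by $s\mapsto \exp_a(s\theta(b))$. The two conditions defining the infimum in \eqref{width-func} single out $x^*:=\exp_a(d(a,c)\theta(b))$ as the unique admissible point, so $\Width_{a,B}(b,c)=d(x^*,c)/d(a,c)$; the hypothesis therefore yields $d(x^*,c)\geq\eta\, d(a,c)$. Writing also $c=\exp_a(d(a,c)\theta(c))$, consider the curve $\gamma(t):=\exp_a(d(a,c)\alpha(t))$ for $t\in[0,1]$ with $\alpha(t):=(1-t)\theta(b)+t\theta(c)$. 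By the triangle inequality $|\alpha(t)|\leq 1$, so $|d(a,c)\alpha(t)|\leq d(a,c)<\xi<\inj(M)$, and $\gamma$ remains in the normal chart, joining $x^*$ to $c$. Differentiating, $\dot\gamma(t)=D_{d(a,c)\alpha(t)}\exp_a\bigl(d(a,c)(\theta(c)-\theta(b))\bigr)$.

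Step 3. Applying the Jacobi field estimate behind \eqref{prep6} (together with monotonicity of $r\mapsto\sinh(r)/r$ and the inequality $\sinh(r)/r\geq 1$ to dominate both the radial and the normal components by a single factor) gives $|\dot\gamma(t)|\leq \tfrac{A}{2}\,d(a,c)\,|\theta(b)-\theta(c)|$ when $\kappa>0$; when $\kappa=0$ or $N=1$, where $\exp_a$ is nonexpanding or even a local isometry, the analogous bound holds with constant $1\leq A/2$. Integrating in $t$,
\[
  d(x^*,c)\leq L(\gamma)\leq \tfrac{A}{2}\,d(a,c)\,|\theta(b)-\theta(c)|,
\]
which combined with $d(x^*,c)\geq \eta\, d(a,c)$ from Step~2 yields $|\theta(b)-\theta(c)|\geq 2A^{-1}\eta\geq A^{-1}\eta$, as desired.

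The main obstacle is Step~3: one must be careful to (a) certify that $\gamma$ stays strictly inside the normal neighbourhood so that the Rauch comparison applies along the whole curve, and (b) uniformly control the differential of the exponential map on a radial \emph{and} a transverse component of $w=d(a,c)(\theta(c)-\theta(b))$ by the single constant $(\sqrt{\kappa}\xi)^{-1}\sinh(\sqrt{\kappa}\xi)$, using that this quantity is at least $1$ and increasing in its argument. The bookkeeping in the degenerate cases $\kappa=0$ and $N=1$ is otherwise routine.
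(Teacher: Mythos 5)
Your argument is correct and follows essentially the same route as the paper: compute $\Width_{0,\theta(B)}(\theta(b),\theta(c)) = |\theta(b)-\theta(c)|$, identify the admissible comparison point $x^* = \exp_a(d(a,c)\theta(b))$ so that the hypothesis gives $d(x^*,c)\geq \eta\,d(a,c)$, and then bound $d(x^*,c)$ from above by $A\,d(a,c)|\theta(b)-\theta(c)|$ using the Rauch/Jacobi-field comparison on the radial curve in normal coordinates. The only cosmetic differences are that you unpack the paper's phrase ``apply \eqref{prep6} with $2$ replaced by $A$'' into an explicit length-integral over the segment $\alpha(t)=(1-t)\theta(b)+t\theta(c)$ (and correctly verify it stays in the normal chart by convexity), and that you observe $x^*$ is actually the unique admissible point rather than merely an admissible one — both are fine; the latter is not needed, since the one-sided inequality already suffices.
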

\begin{proof}
Take $\tilde{b},\tilde{c}\in \theta(B)$ with $\tilde{b}\neq \tilde{c}$. 
Note that $|\tilde{b}|=|\tilde{c}|=1$. 
There exist $b,c\in B$ with $b\neq c$ such that $\tilde{b}=\theta(b)$ and $\tilde{c}=\theta(c)$. 
We remark that $\tilde{c}=\theta(c)$ means $c=\exp_{a}(d(a,c)\tilde{c})$, since $|\exp_{a}^{-1}c|=d(a,c)$. 
We suppose $d(a,b)\geq d(a,c)$ if necessary. 
Since 
\[
	\{\tilde{x}\in T_{a}M; |\tilde{x}|=1, 
	|\tilde{x}|+|\tilde{x}-\tilde{b}|=|\tilde{b}|\}=\{ \tilde{b}\}, 
\]
we have 
\[\Width_{0,\theta(B)}(\tilde{b},\tilde{c})=\frac{|\tilde{b}-\tilde{c}|}{|\tilde{c}|}=|\tilde{b}-\tilde{c}|. \]
On the other hand, $x:=\exp_{a}(d(a,c)\tilde{b})\in M$ satisfies $d(a,x)=d(a,c)$ and $d(a,x)+d(x,b)=d(a,b)$. 
Thus, $d(x,c)/d(a,c)\geq \Width_{a,B}\geq \eta$. 
It suffices to estimate $d(x,c)/d(a,c)$ from above by $A|\tilde{b}-\tilde{c}|$. 
Then, by the distance comparison theorem \eqref{prep6} with replacing $2$ by $A$, we have 
\[d(x,c)\leq A|d(a,c)\tilde{b}-d(a,c)\tilde{c}|\leq A|\tilde{b}-\tilde{c}|d(a,c). \]
Thus, $\Width_{0,\theta(B)}(\tilde{b},\tilde{c})\geq A^{-1}\eta$ 
and the proof is complete. 
\end{proof}

\begin{definition}
Let $\bS^{N-1}$ be a unit sphere in $\R^{N}$. 
For $w\in (0,\infty)$, define $\Dis(N,w)$ 
as the positive integer such that the following property holds: 
There exists $P\subset \bS^{N-1}$ with $\#P \leq \Dis(N,w)$ such that 
$|p_{1}-p_{2}|\geq w$ for all $p_{1},p_{2}\in P$ ($p_{1}\neq p_{2}$), 
and there does not exist $P\subset \bS^{N-1}$ with $\#P\geq \Dis(N,w)+1$ 
such that 
$|p_{1}-p_{2}|\geq w$ for all $p_{1},p_{2}\in P$ ($p_{1}\neq p_{2}$). 
We remark that $\Dis(1,w)=2$ for $w\in(0,2]$. 
\end{definition}

Roughly speaking, $\Dis(N,w)$ is the maximum number of points 
distributed on the unit sphere keeping distance over $w$. 
One can see that $\Dis(N,w)\to \infty$ as $w\to 0$. 

\begin{proposition}
A connected $N$-dimensional complete Riemannian manifold $(M,g)$ with $\inj(M)>0$ satisfying $|\sec(M)|\leq\kappa$ for some $0\leq \kappa<\infty$ when $N\geq 2$
is directionally $(\xi,\eta,\zeta)$-limited at $M$ for $\xi\in(0,\inj(M))$, $\eta\in(0,1/3]$ and $\zeta\geq\Dis(N,A^{-1}\eta)$, 
where $A$ is defined by \eqref{def_of_A}. 
\end{proposition}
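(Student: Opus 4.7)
The plan is to combine Lemmas \ref{theta-inj} and \ref{width-width} with the definition of $\Dis(N,\cdot)$ in a direct way. Fix $a\in M$ and $B\subset B(a,\xi)\setminus\{a\}$ with $\#B\geq 2$ satisfying $\Width_{a,B}\geq\eta$. The goal is to bound $\#B$ by $\Dis(N,A^{-1}\eta)$, which will immediately yield the $(\xi,\eta,\zeta)$-limited conclusion for any $\zeta\geq\Dis(N,A^{-1}\eta)$.

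First, since $\eta>0$ forces $\Width_{a,B}>0$, Lemma \ref{theta-inj} gives that the direction map $\theta\colon B\to UT_aM$ is injective, so $\#B=\#\theta(B)$. Next, Lemma \ref{width-width} ensures $\Width_{0,\theta(B)}\geq A^{-1}\eta$ in the Euclidean metric on $T_aM\cong \R^N$. The key observation is that every element of $\theta(B)$ lies on the unit sphere $UT_aM\cong \bS^{N-1}$, so for any two distinct $\tilde b,\tilde c\in\theta(B)$ one has $|\tilde b|=|\tilde c|=1$; in this case the only $\tilde x\in T_aM$ with $|\tilde x|=|\tilde c|$ and $|\tilde x|+|\tilde x-\tilde b|=|\tilde b|$ is $\tilde x=\tilde b$ itself, so the infimum in \eqref{width-func} (with $a$ replaced by $0$) collapses to $|\tilde b-\tilde c|/|\tilde c|=|\tilde b-\tilde c|$. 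Hence $|\tilde b-\tilde c|\geq A^{-1}\eta$ for all distinct $\tilde b,\tilde c\in\theta(B)$.

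By the definition of $\Dis(N,A^{-1}\eta)$ as the maximal cardinality of a subset of $\bS^{N-1}$ whose points are pairwise separated by Euclidean distance at least $A^{-1}\eta$, we conclude $\#\theta(B)\leq\Dis(N,A^{-1}\eta)$, and therefore $\#B\leq\Dis(N,A^{-1}\eta)\leq\zeta$. Taking the supremum over all admissible $B$ establishes the claim.

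There is no genuine obstacle here, since the two prepared lemmas do the work: Lemma \ref{theta-inj} transfers the counting problem from $B\subset M$ to $\theta(B)\subset \bS^{N-1}$, and Lemma \ref{width-width} (with the Jacobi field comparison baked into the constant $A$) transfers the width lower bound. The only subtle checkpoint is verifying that the Width on the sphere reduces to Euclidean distance between unit vectors, which is immediate from the defining formula once one recognizes that the constraint $|\tilde x|+|\tilde x-\tilde b|=|\tilde b|$ forces $\tilde x=\tilde b$ when $|\tilde x|=|\tilde b|=1$.
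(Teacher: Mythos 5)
Your proof is correct and follows essentially the same route as the paper's: apply Lemma~\ref{theta-inj} to get injectivity of $\theta$ on $B$, apply Lemma~\ref{width-width} to transfer the width bound to $\theta(B)\subset UT_aM\cong\bS^{N-1}$, observe that on the unit sphere the width collapses to Euclidean distance, and invoke the definition of $\Dis(N,A^{-1}\eta)$. Your explicit re-derivation that $\Width_{0,\theta(B)}(\tilde b,\tilde c)=|\tilde b-\tilde c|$ is a fact already established inside the proof of Lemma~\ref{width-width}, so including it again is harmless redundancy rather than a new idea.
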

\begin{proof}
Fix $a\in M$ and $B\subset B(a,\xi)\setminus\{a\}$ with $\#B\geq2$. 
Assume that $\Width_{a,B}\geq \eta$. 
Then, by Lemma \ref{width-width}, we have $\Width_{0,\theta(B)}\geq A^{-1}\eta$. 
Thus, $\tilde{b},\tilde{c}\in\theta(B)$ with $\tilde{b}\neq \tilde{c}$ satisfy $|\tilde{b}-\tilde{c}|\geq A^{-1}\eta$, 
and so $\#\theta(B)\leq \Dis(N,A^{-1}\eta)$. 
Since $\#\theta(B)=\#B$ by Lemma \ref{theta-inj}, the proof is complete. 
\end{proof}

This proposition together with Theorem \ref{Besicovitch} 
shows the following:  

\begin{corollary}\label{besi}
Let $(M,g)$ be a connected $N$-dimensional complete Riemannian manifold 
with $\inj(M)>0$ satisfying $|\sec(M)|\leq \kappa$ for some $0\leq \kappa<\infty$ when $N\geq2$. 
For $\xi\in(0,\inj(M)/2)$, 
there exists a positive integer $k_{0}$ such that the following holds:  
If $\mathcal{F}$ is a family of closed balls with radii less 
than $\rho$ with $\rho<\xi$ and 
each point of $M$ is the center of some member of $\mathcal{F}$, then 
$M$ is covered by the union of $k_{0}$ disjointed subfamilies of $\mathcal{F}$. 
Here, the constant $k_{0}$ depends only on $N$ if $\kappa= 0$ and on $N$ and $\sqrt{\kappa}\xi$ if $\kappa>0$. 
Moreover, $k_{0}\to \infty$ as $\sqrt{\kappa}\xi\to \infty$. 
\end{corollary}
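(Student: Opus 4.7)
The statement is essentially a packaged corollary of the preceding proposition and Theorem \ref{Besicovitch} (Federer's Besicovitch theorem), so the plan is mostly to match up hypotheses and track the dependence of the constants. The substantive geometric work --- verifying that $(M,g)$ is directionally $(\xi,\eta,\zeta)$-limited at $M$ --- has already been carried out via Lemmas \ref{theta-inj} and \ref{width-width} together with the Rauch-type comparison estimate \eqref{prep6}. So the main task is bookkeeping.

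The plan is as follows. Given $\xi\in(0,\inj(M)/2)$ as in the corollary, set $\xi':=2\xi$, which lies in $(0,\inj(M))$ so the proposition applies with parameter $\xi'$. Fix a convenient $\eta$, say $\eta:=1/3$, and choose
\[
\zeta:=\Dis(N,A^{-1}\eta)=\Dis(N,A^{-1}/3),
\]
where $A$ is the constant from \eqref{def_of_A} computed with $\xi$ replaced by $\xi'=2\xi$: thus $A=2$ if $\kappa=0$ or $N=1$, and $A=(\sqrt{\kappa}\xi)^{-1}\sinh(2\sqrt{\kappa}\xi)$ if $\kappa>0$. The proposition then guarantees that $(M,g)$ is directionally $(\xi',\eta,\zeta)$-limited at $M$. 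Since in the corollary the radii of balls in $\mathcal{F}$ are less than $\rho<\xi=\xi'/2$, the hypothesis of Theorem \ref{Besicovitch} is satisfied with $\mu:=\rho$. Applying that theorem yields a decomposition of $\mathcal{F}$ into $2\zeta+1$ disjointed subfamilies that together cover $M$; set $k_0:=2\zeta+1$.

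It remains to verify the two claims about $k_0$. For the dependence claim, notice that $A$ depends only on $N$ when $\kappa=0$ (or $N=1$), and only on $N$ and $\sqrt{\kappa}\xi$ when $\kappa>0$; the same therefore holds for $\zeta=\Dis(N,A^{-1}/3)$ and hence for $k_0$. For the divergence claim, as $\sqrt{\kappa}\xi\to\infty$ the formula for $A$ gives $A\to\infty$ (because $\sinh(s)/s\to\infty$ as $s\to\infty$), so $A^{-1}/3\to 0^{+}$, and the remark following the definition of $\Dis$ (that $\Dis(N,w)\to\infty$ as $w\to 0$) yields $\zeta\to\infty$, hence $k_0\to\infty$.

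The only obstacle I anticipate is entirely cosmetic: making sure the radius bound in Theorem \ref{Besicovitch} (strict inequality $\mu<\xi/2$) lines up correctly with the corollary's hypothesis $\rho<\xi$; this is precisely why the proof uses the proposition at parameter $\xi'=2\xi$ rather than $\xi$, and requires the restriction $\xi<\inj(M)/2$ (rather than $\xi<\inj(M)$) in the corollary's statement. No further geometric input is needed beyond what has already been established.
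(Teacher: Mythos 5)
Your proof is correct and follows exactly the route the paper intends: the paper itself never spells out the bookkeeping and simply asserts that the proposition together with Theorem \ref{Besicovitch} yields the corollary, so what you have written is the proof. The key observation you make explicit — applying the proposition at parameter $\xi'=2\xi$ so that Federer's radius bound $\mu<\xi'/2=\xi$ lines up with the corollary's hypothesis $\rho<\xi$, which is precisely why the corollary restricts $\xi<\inj(M)/2$ — is the right way to reconcile the hypotheses, and the resulting formula $A=(\sqrt{\kappa}\xi)^{-1}\sinh(2\sqrt{\kappa}\xi)$ correctly shows that $k_0=2\,\Dis(N,A^{-1}/3)+1$ depends on $\kappa$ and $\xi$ only through $\sqrt{\kappa}\xi$ and tends to infinity as $\sqrt{\kappa}\xi\to\infty$.
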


We next give a bound on how many balls with radii $\rho/2$ 
are necessary to cover a ball with radius $\rho$ for sufficiently small $\rho$. 
Put 
\begin{equation}\label{sp_rad}
	R':=\left\{
	\begin{aligned}
	&\frac{4}{5}\inj(M)&&  \mbox{ if }N=1,\\
	&\frac{4}{5}\min\left\{\inj(M),\frac{\pi}{2\sqrt{\kappa}}\right\}
	&& \mbox{ if }N\geq 2. 
	\end{aligned}
	\right.
\end{equation}
For $a\in M$ and $\rho\in (0,R']$, we set the following property 
(P) for a subset $B\subset B(a,\rho)$ with $\#B\geq2$. 
\[
	\mbox{(P): } B(b_{1},\rho/4)\cap B(b_{2},\rho/4)=\emptyset \quad 
	\mbox{ for all }b_{1},b_{2}\in B \mbox{ with } b_{1}\neq b_{2}. 
\]
Then, we define 
\[\Pac(a,\rho):=\sup\{\#B;  B\subset B(a,\rho)\mbox{ satisfies (P)}\}. \]

\begin{lemma}\label{Pac}
Assume $\Pac(a,\rho)<\infty$. Take $B$ such that $\# B=\Pac(a,\rho)$. 
Then, 
\[B(a,\rho)\subset \bigcup_{b\in B}B(b,\rho/2). \]
\end{lemma}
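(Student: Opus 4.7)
The plan is to argue by contradiction using the maximality of $B$. Suppose there exists a point $x_0 \in B(a,\rho)$ that lies outside $\bigcup_{b\in B}B(b,\rho/2)$, so that
\[
d(x_0,b)\geq \rho/2 \quad\text{for every }b\in B.
\]
I will show that $B':= B\cup\{x_0\}$ is a subset of $B(a,\rho)$ which still satisfies property (P). Since $\#B' = \#B+1 = \Pac(a,\rho)+1$, this contradicts the definition of $\Pac(a,\rho)$ as a supremum, completing the proof.

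The verification that $B'$ satisfies (P) reduces to checking, for every $b\in B$, that $B(x_0,\rho/4)\cap B(b,\rho/4)=\emptyset$; the remaining disjointness conditions hold because $B$ already satisfies (P). This is just the triangle inequality: if some $y$ lay in both balls, then
\[
d(x_0,b)\leq d(x_0,y)+d(y,b)<\rho/4+\rho/4=\rho/2,
\]
contradicting $d(x_0,b)\geq \rho/2$. Note that $x_0\in B(a,\rho)$ is built into the choice of $x_0$, so indeed $B'\subset B(a,\rho)$.

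I expect no real obstacle here: the argument is a standard ``maximal packing implies covering'' observation, and it uses only the metric structure of $M$ through the triangle inequality, together with the hypothesis $\Pac(a,\rho)<\infty$ which guarantees that the supremum is attained by some $B$ of maximal cardinality. No comparison theorems, curvature bounds, or use of $R'$ are needed for this particular statement — those play a role only when one wants to bound $\Pac(a,\rho)$ itself (e.g.\ by a constant depending only on $N$), which is not what is being asserted in this lemma.
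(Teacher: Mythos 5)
Your proof is correct and follows the same approach as the paper: argue by contradiction that a point $x_0$ outside the union could be adjoined to $B$ while preserving (P), violating the maximality of $\#B$. You simply spell out the triangle-inequality verification that the paper leaves as ``one can easily see.''
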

\begin{proof}
If not, there exists $c\in B(a,\rho)\setminus(\cup_{b\in B}B(b,\rho/2))$. 
Then, one can easily see that $B':=B\cup \{c\}$ also satisfies (P). 
This contradicts the maximality of $\# B$. 
\end{proof}

Thus, it suffices to bound $\Pac(a,\rho)$ from above. 
Fix $B\subset B(a,\rho)$ with $\#B\geq2$ satisfying (P). 
We remark that 
\begin{equation}\label{balls}
	\bigcup_{b\in B}B(b,\rho/4)\subset B(a,5\rho/4). 
\end{equation}
We estimate the volume of $B(b,\rho/4)$ and $B(a,5\rho/4)$ 
from below and above, respectively. 
By the volume comparison theorem from above \eqref{prep3}, we have 
\[
\Vol(B(a,5\rho/4))\leq 2^{N-1}N^{-1}\Area(\bS^{N-1})(5\rho/4)^{N}, 
\]
where we used $5\rho/4\leq \pi/(2\sqrt{\kappa})$ when $N\geq 2$ and $\kappa>0$. 
On the other hand, by the volume comparison theorem from below \eqref{prep5}, we have 
\[
\Vol(B(b,\rho/4))\geq 2^{1-N}N^{-1}\Area(\bS^{N-1})(\rho/4)^{N}, 
\]
where we used $\rho/4\leq \pi/(2\sqrt{\kappa})$ when $N\geq 2$ and $\kappa>0$. 
Since the left-hand side of \eqref{balls} is the disjoint union, 
by taking the volume of the both-hand sides, 
we see that $\# B$ is bounded from above by the ratio of the right-hand sides of the above two inequalities.  
Then, by Lemma \ref{Pac}, we obtain the following: 

\begin{theorem}\label{half_ball}
Let $(M,g)$ be a connected $N$-dimensional complete Riemannian manifold 
with $\inj(M)>0$ satisfying $|\sec(M)|\leq \kappa$ for some $0\leq \kappa<\infty$ when $N\geq2$. 
Then, there exists a number $k_{1}$ depending only on $N$ such that any ball with radius $\rho< R'$ can be covered 
by at most $k_{1}$ balls with radii $\rho/2$, 
where $R'$ is given by \eqref{sp_rad}. 
\end{theorem}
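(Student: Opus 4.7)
The plan is to prove Theorem \ref{half_ball} by a standard maximal packing argument combined with the volume comparison theorems recorded in Appendix \ref{sec:app-comp}. The idea is that if we take a maximal collection of points $b_1,\ldots,b_m \in B(a,\rho)$ whose quarter-radius balls $B(b_i,\rho/4)$ are pairwise disjoint, then by maximality the half-radius balls $B(b_i,\rho/2)$ must cover $B(a,\rho)$; once this is established, one only needs to bound $m$ from above in terms of $N$ alone.

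More precisely, I would proceed in three steps. First, fix $B = \{b_1,\ldots,b_m\} \subset B(a,\rho)$ maximal with respect to the packing property that the balls $B(b_i,\rho/4)$ are pairwise disjoint. The covering property is immediate: if some $c \in B(a,\rho)$ satisfied $d(c,b_i) \geq \rho/2$ for all $i$, then $B(c,\rho/4)$ would be disjoint from every $B(b_i,\rho/4)$, contradicting maximality. So it remains to bound $m$. Second, observe that every $B(b_i,\rho/4)$ is contained in $B(a,\rho + \rho/4) = B(a,5\rho/4)$, so
\begin{equation*}
\sum_{i=1}^{m}\Vol(B(b_i,\rho/4)) \leq \Vol(B(a,5\rho/4)).
\end{equation*}
Third, apply the volume comparisons. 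For the upper bound, the hypothesis $\rho < R'$ gives $5\rho/4 < \pi/(2\sqrt{\kappa})$ and $5\rho/4 < \inj(M)$ (when $N\geq 2$), so \eqref{prep3} applies and yields $\Vol(B(a,5\rho/4)) \leq 2^{N-1}N^{-1}\Area(\bS^{N-1})(5\rho/4)^N$. For the lower bound, the same condition guarantees $\rho/4 < \pi/(2\sqrt{\kappa}) \leq \pi/(4\sqrt{\kappa})\cdot 2$ -- actually one must check $\rho/4 < \pi/(4\sqrt{\kappa})$, which follows from $\rho < R' \leq (4/5)\cdot \pi/(2\sqrt{\kappa}) < \pi/\sqrt{\kappa}$, hence $\rho/4 < \pi/(4\sqrt{\kappa})$ -- so \eqref{prep5} gives $\Vol(B(b_i,\rho/4)) \geq 2^{1-N}N^{-1}\Area(\bS^{N-1})(\rho/4)^N$. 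Combining, $m \leq 2^{2N-2}\cdot 5^N$, a constant depending only on $N$, and we take $k_1$ to be this number. The case $N=1$ is immediate from the same argument using the definition of $R'$ for $N=1$, since the volume comparisons reduce to equalities on $\R$ or $\bS^1$.

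The arguments are essentially routine once the two volume comparison inequalities are available; the only genuine care needed is verifying that the chosen radius $5\rho/4$ for the outer ball and $\rho/4$ for the inner balls both satisfy the hypotheses of \eqref{prep3} and \eqref{prep5}. This is precisely why the constant $R'$ in \eqref{sp_rad} is defined as $(4/5)\min\{\inj(M),\pi/(2\sqrt{\kappa})\}$: the factor $4/5$ is tuned so that $5\rho/4 < \inj(M)$ and $5\rho/4 < \pi/(2\sqrt{\kappa})$. I do not expect a serious obstacle in this proof; the only subtlety is bookkeeping to ensure the packing--covering implication is stated correctly (it is convenient to formalize it through an auxiliary quantity like $\Pac(a,\rho)$) and that the ratio of volumes does not inadvertently pick up dependence on $\kappa$, which is avoided precisely because both comparison inequalities \eqref{prep3} and \eqref{prep5} give constants depending only on $N$ once the radii are in the admissible range.
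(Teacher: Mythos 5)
Your proposal is correct and follows essentially the same route as the paper: a maximal collection of points in $B(a,\rho)$ whose $\rho/4$-balls are pairwise disjoint automatically yields a $\rho/2$-cover (the paper formalizes exactly this as Lemma \ref{Pac} via the quantity $\Pac(a,\rho)$), and the cardinality is then bounded by the volume ratio from \eqref{prep3} and \eqref{prep5} applied to $B(a,5\rho/4)$ and the disjoint quarter-balls, giving the same constant $2^{2N-2}5^N$. Your observation about the role of the factor $4/5$ in the definition of $R'$ matches the paper's reasoning, and your handling of the $N=1$ case via the explicit models also agrees with the paper.
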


\end{document}